\def\daspr{{\delta_*}}
\def\Caspr{{C_*}}
\def\daste{{\delta_{0}}}
\def\Caste{{C_{0}}}
\def\Cfive{{K}}
\def\RR{{\mathbb R}}
\def\Ric{{\rm Ric}}
\def\diag{{\rm diag}}
\begin{document}
\newtheorem{THM}{Theorem}
\renewcommand*{\theTHM}{\Alph{THM}}
\newtheorem{Def}{Definition}[section]
\newtheorem{thm}{Theorem}[section]
\newtheorem{lem}{Lemma}[section]
\newtheorem{rem}{Remark}[section]
\newtheorem{question}{Question}[section]
\newtheorem{prop}{Proposition}[section]
\newtheorem{cor}{Corollary}[section]
\newtheorem{clm}{Claim}[section]
\newtheorem{step}{Step}[section]
\newtheorem{sbsn}{Subsection}[section]
\newtheorem*{conj}{Conjecture}

\title
{Symmetry, quantitative Liouville theorems and analysis of large solutions of conformally invariant
fully nonlinear elliptic equations}
\author{YanYan Li \footnote{Department of Mathematics, Rutgers University, Hill Center, Busch Campus, 110 Frelinghuysen Road, Piscataway, NJ 08854, USA. Email: yyli@math.rutgers.edu.}~\footnote{Y.Y. Li is partially supported by NSF grant DMS-1501004.}~ and Luc Nguyen \footnote{Mathematical Institute and St Edmund Hall, University of Oxford, Andrew Wiles Building, Radcliffe Observatory Quarter, Woodstock Road, Oxford OX2 6GG, UK. Email: luc.nguyen@maths.ox.ac.uk.}}
\date{}

\maketitle

\begin{abstract}
We establish blow-up profiles for any blowing-up sequence of solutions of general conformally invariant
fully nonlinear elliptic equations on Euclidean domains. We prove that (i) the distance between blow-up points is bounded from below by a universal positive number, (ii) the solutions are very close to a single standard bubble in a universal positive distance around each blow-up point, and (iii) the heights of these bubbles are comparable by a universal factor. As an application of this result, we establish a quantitative Liouville theorem.
\end{abstract}

\tableofcontents
\section{Introduction}

The main goal of this
 paper is to give a
 fine analysis of  blow-up solutions of 
conformally invariant fully nonlinear second order elliptic equations.

Let $n \geq 3$ be an integer and
\begin{equation}
\Gamma\subset \RR^n\ \mbox{be an open convex symmetric cone with vertex at the origin}
\label{01}
\end{equation}
satisfying
\begin{equation}
\Gamma_n\subset
\Gamma\subset
\Gamma_1,
\label{02}
\end{equation}
where
\[
\Gamma_n 	:=\{\lambda\in \RR^n\ |\
\lambda_i>0\ \forall\ i\}
, \qquad \Gamma_1 :=\Big\{\lambda\in \RR^n\ |\
\sum_{i=1}^n\lambda_i>0\Big\}.
\]
We assume that
\begin{equation}
f\in C^1(\Gamma)\cap C^0(\overline \Gamma)\ \mbox{is
symmetric in
} \ \lambda_i,
\label{03}
\end{equation}
\begin{equation}
f>0, \ \  \frac{  \partial f}{ \partial \lambda_i}>0\ 
\mbox{in}\ \Gamma\ \forall\ i,\ \
f=0\ \mbox{on}\ \partial \Gamma,
\label{04}
\end{equation}
\begin{equation}
f(\lambda) < 1\qquad\forall\
\lambda\in \Gamma \text{ satisfying } \ \sum_{i=1}^n \lambda_i <\delta.
\label{FU-1a}
\end{equation}

In \eqref{01} and \eqref{03}, the symmetric property of $\Gamma$ and $f$ is understood in the sense that if $\lambda \in \Gamma$ and $\tilde \lambda$ is a permutation of $\lambda$, then $\tilde \lambda \in \Gamma$ and $f(\tilde\lambda) = f(\lambda)$. Also, throughout the paper, 
 \[
 \parbox{.8\textwidth}{whenever we write $f(\lambda)$, we 
 implicitly assume that $\lambda\in \overline \Gamma$.}
 \]

When $\Gamma \neq \Gamma_1$, \eqref{FU-1a} is a consequence of \eqref{03} and \eqref{04} (cf. \cite[Proposition B.1]{LiNgBocher}). However, this does not have to be the case when $\Gamma = \Gamma_1$, for example when
\[
f(\lambda) = \Big(\sum_{i=1}^n \lambda_i\Big)^{\frac{n-1}{n+3}} \Big(\sum_{i=1}^n \lambda_i^2\Big)^{\frac{2}{n+3}}.
\]

Illuminating examples of $(f, \Gamma)$ are $(f, \Gamma)=(\sigma_k^{\frac 1k}, \Gamma_k)$ where $\sigma_k(\lambda) = \sum \lambda_{i_1} \ldots \lambda_{i_k}$ is the $k$-th elementary symmetric function and 
\begin{align*}
\Gamma_k
	&= \text{ the connected component of $\{\lambda \in \RR^n: \sigma_k(\lambda) > 0\}$ containing $\Gamma_n$}\\
	&= \{\lambda \in \RR^n: \sigma_l(\lambda) > 0 \text{ for all } 1 \leq l \leq k\}.
\end{align*}
Besides \eqref{01}-\eqref{FU-1a}, $(\sigma_k^{\frac 1k}, \Gamma_k)$ enjoys other nice and helpful properties, such as concavity and homogeneity properties of $\sigma_k^{1/k}$, Newton's inequalities, divergence and variational structures, etc., which we do not assume in this paper. In particular, we would like to note that no concavity or homogeneity assumption on $f$ is being made in the present paper.

For a positive $C^2$ function $u$, let $A^u$ be the $n\times n$ matrix with entries
\[
(A^u)_{ij} = - \frac{2}{n-2} u^{-\frac{n+2}{n-2}} \nabla_i \nabla_j u + \frac{2n}{(n-2)^2} u^{-\frac{2n}{n-2}}\,\nabla_i u \,\nabla_j u -  \frac{2}{(n-2)^2} u^{-\frac{2n}{n-2}}\,|\nabla u|^2\,\delta_{ij}.
\]
This is sometimes referred to as the conformal Hessian of $u$.

The conformal Hessian $A^u$ arises naturally in conformal geometry as follows. Recall that the Riemann curvature $Riem_g$ of a Riemannian metric $g$ can be decomposed into traced and traceless parts as
\[
Riem_g = A_g \owedge  g + W_g,
\]
where $A_g = \frac{1}{n-2}(\Ric_g - \frac{1}{2(n-1)}R_g\,g)$, $\Ric_g$, $R_g$ and $W_g$ are the Schouten curvature, the Ricci curvature, the scalar curvature and the Weyl curvature of $g$ and $\owedge$ denotes the Kulkarni-Nomizu product. While the $(1,3)$-valent Weyl curvature remains unchanged under a conformal change of the metric, the Schouten curvature is adjusted by a second order operator of the conformal factor. In particular, if we consider the metric $g_u :=  u^{\frac{4}{n-2}}g_{flat}$ conformal to the flat metric $g_{flat}$ on $\RR^n$, then the Schouten curvature $A_{g_u}$ of $g_u$ is given by the conformal Hessian in the form
\[
A_{g_u} = u^{\frac{4}{n-2}}\,(A^u)_{ij}\,dx^i\,dx^j.
\]
Consequently, we have
\[
\lambda(A_{g_u}) = \lambda(A^u)
\]
where $\lambda(A_{g_u})$ denotes the eigenvalues of $A_{g_u}$ with respect to the metric $g_u$ and $\lambda(A^u)$ denotes those of the matrix $A^u$.

$A^u$ enjoys a conformal invariance property, inherited from the conformal structure of $\RR^n$, which will be of special importance in our treatment. Recall that a map $\varphi: \RR^n \cup\{\infty\} \rightarrow \RR^n \cup\{\infty\}$ is called a M\"obius transformation if it is the composition of finitely many of the following types of transformations:
\begin{itemize}
\item a translation: $x \mapsto x + \bar x$ where $\bar x$ is a given vector in $\RR^n$,
\item a dilation: $x \mapsto a\,x$ where $a$ is a given positive scalar,
\item a Kelvin transformation: $x \mapsto \frac{x}{|x|^2}$.
\end{itemize}
For a function $u$ and a M\"obius transformation $\varphi$, let
\begin{equation}
u_\varphi = |J_\varphi|^{\frac{n-2}{2n}}u \circ \varphi,
	\label{Eq:uvarphi}
\end{equation}
where $J_\varphi$ is the Jacobian of $\varphi$. A calculation gives 
\[
A^{u_\varphi}(x) = O_\varphi(x)^t A^u(\varphi(x)) O_\varphi(x)
\]
for some orthogonal $n \times n$ matrix $O_\varphi(x)$. In particular,
\begin{equation}
\lambda(A^{u_\varphi}(x)) = \lambda(A^u( \varphi(x))).
	\label{Eq:CIProp}
\end{equation}

The main result of this paper concerns an analysis 
on the behavior of a sequence  $\{u_k\}\in C^2(B_3(0))$ satisfying
\begin{equation}
f(\lambda(A^{u_k})) =1, \
u_k>0,\
\mbox{in}\ B_3(0),
\label{eqB2}
\end{equation}
and
\begin{equation}
\sup_{B_1(0)}u_k\to \infty,
\label{eqB1}
\end{equation}
where
$(f, \Gamma)$ satisfies \eqref{01}-\eqref{FU-1a}. Note that no other assumptions on $u_k$ is made.

As is known, equation \eqref{eqB2} is 
a fully nonlinear
 elliptic equation. Fully nonlinear elliptic equations involving $f(\lambda(\nabla^2 u))$ were investigated in the classic paper of Caffarelli, Nirenberg and Spruck \cite{C-N-S-Acta}. 
 
 Our paper appears to be the first fine blow-up analysis in this fully nonlinear context. We expect this to serve as a crucial step in the study of the problem on Riemannian manifolds.

To obtain our result on fine analysis of  blow-up solutions,
we make use of the following Liouville theorems.

\begin{THM}[\cite{LiLi05}]\label{TheoremA}
 Let $(f, \Gamma)$ satisfy \eqref{01}-\eqref{04}
and
let $0<v\in C^2(\RR^n)$ satisfy 
\begin{equation}
f(\lambda(A^v))=1\qquad \mbox{in}\ \RR^n.
\label{eqB0}
\end{equation}
Then 
\begin{equation}
v(x)\equiv \left( \frac a{  1+b^2|x-\bar x|^2}  \right) ^{ \frac {n-2}2 },
\qquad x\in \RR^n
\label{vform}
\end{equation}
for some $\bar x\in \RR^n$ and some positive constants
$a$ and $b$ satisfying 
\[
f(2b^2a^{-2}, \cdots, 2b^2a^{-2})=1.
\]
\end{THM}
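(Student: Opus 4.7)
The plan is to classify $v$ by the method of moving spheres, using the conformal invariance \eqref{Eq:CIProp} of $f(\lambda(A^\cdot))$ as the essential structural input. For each $x_0 \in \RR^n$ and $\lambda > 0$, let $\varphi_{x_0,\lambda}$ be the Kelvin inversion $x \mapsto x_0 + \lambda^2 (x-x_0)/|x-x_0|^2$, and set $v_{x_0,\lambda}(x) := (\lambda/|x-x_0|)^{n-2}\, v(\varphi_{x_0,\lambda}(x))$ in accordance with \eqref{Eq:uvarphi}. By \eqref{Eq:CIProp}, $v_{x_0,\lambda}$ is again a positive solution of $f(\lambda(A^\cdot)) = 1$ on $\RR^n \setminus \{x_0\}$, so $v$ and $v_{x_0,\lambda}$ may be compared on $\RR^n \setminus \overline{B_\lambda(x_0)}$ via maximum principle techniques.

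I would first establish the start-up step: for every $\lambda > 0$ sufficiently small, $v_{x_0,\lambda} \leq v$ on $\RR^n \setminus \overline{B_\lambda(x_0)}$. This combines a Taylor expansion of $v$ about $x_0$ (to treat a neighborhood of the sphere) with the tail bound $v_{x_0,\lambda}(x) = O(\lambda^{n-2}|x-x_0|^{-(n-2)})$ at infinity. Let $\bar\lambda(x_0) \in (0,\infty]$ denote the supremum of such $\lambda$. The argument then splits into two cases: either $\bar\lambda(x_0) = \infty$ for some $x_0$, in which case sending $\lambda \to \infty$ yields a decay lower bound on $v$ at infinity that conflicts with $\lambda(A^v) \in \Gamma \subset \Gamma_1$ via an integration-by-parts argument on large balls; or $\bar\lambda(x_0) < \infty$ for every $x_0$, in which case a strong comparison principle at the critical scale forces the identity $v_{x_0,\bar\lambda(x_0)} \equiv v$ on $\RR^n$ for every $x_0$.

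The main technical obstacle is this strong comparison principle, since concavity and homogeneity of $f$ are not being assumed. At a putative touching point $x_* \notin \overline{B_{\bar\lambda(x_0)}(x_0)}$, both $\lambda(A^v(x_*))$ and $\lambda(A^{v_{x_0,\bar\lambda}}(x_*))$ lie in the level set $\{f = 1\} \subset \overline\Gamma$; interpolating along a straight path in the space of symmetric matrices and invoking the positive monotonicity $\partial f/\partial \lambda_i > 0$ from \eqref{04}, the difference $w := v - v_{x_0,\bar\lambda}$ satisfies a linear, uniformly elliptic second-order differential inequality $a^{ij}\partial_i\partial_j w + b^i \partial_i w + c\,w \geq 0$ in a neighborhood of $x_*$, the uniform ellipticity of $a^{ij}$ coming from the compactness of the interpolated eigenvalue range in $\overline\Gamma$. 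The standard strong maximum principle and Hopf boundary lemma applied in an annular region around $\partial B_{\bar\lambda(x_0)}(x_0)$ then produce the required rigidity.

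Having established that for every $x_0 \in \RR^n$ there exists $\lambda_0(x_0) > 0$ with $v_{x_0,\lambda_0(x_0)} \equiv v$, I would invoke a purely functional calculus lemma — that a positive continuous function on $\RR^n$ with Kelvin-symmetry at every point is necessarily of the form \eqref{vform} — to conclude the classification. Substituting \eqref{vform} back into the equation and computing that $A^v \equiv 2b^2 a^{-2}\,I$ for such $v$ produces the normalization $f(2b^2 a^{-2},\dots,2b^2 a^{-2}) = 1$.
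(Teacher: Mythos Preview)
Your overall architecture --- moving spheres, conformal invariance to transport the equation, ellipticity from $\partial f/\partial\lambda_i>0$ to run comparison --- is exactly the one used in \cite{LiLi05} and in the proof of Theorem~\ref{proposition1} in this paper. The gap is in your treatment of the case $\bar\lambda(x_0)<\infty$.

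You assert that the strong maximum principle and Hopf lemma ``force the identity $v_{x_0,\bar\lambda(x_0)}\equiv v$ on $\RR^n$''. On an unbounded domain this does not follow. If $w:=v-v_{x_0,\bar\lambda(x_0)}>0$ in $\RR^n\setminus\overline{B_{\bar\lambda(x_0)}(x_0)}$, then Hopf controls the neighborhood of $\partial B_{\bar\lambda(x_0)}(x_0)$ and strict positivity controls any compact set; but to push $\lambda$ past $\bar\lambda(x_0)$ you must also control a neighborhood of infinity, and for that you need $\liminf_{|y|\to\infty}|y|^{n-2}v(y)>\bar\lambda(x_0)^{n-2}v(x_0)$. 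Nothing you have written rules out equality here --- and in fact equality is precisely what occurs. So the sphere can (and does) stop at $\bar\lambda(x_0)$ without any touching point in $\RR^n$, and your putative $x_*$ need not exist. The paper's route is instead to prove that the obstruction \emph{is} at infinity: one shows
\[
\alpha:=\liminf_{|y|\to\infty}|y|^{n-2}v(y)=\bar\lambda(x)^{n-2}v(x)\qquad\text{for every }x\in\RR^n,
\]
(see Lemma~\ref{lem-0.3}), and then reads off the form \eqref{vform} by differentiating this identity after a Kelvin transform. Your calculus lemma (Kelvin symmetry at every point $\Rightarrow$ bubble) is a correct statement, but you have not established its hypothesis.

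A secondary point: in the case $\bar\lambda(x_0)=\infty$, your ``integration-by-parts argument on large balls'' is vague. From $\Gamma\subset\Gamma_1$ alone one only gets superharmonicity $-\Delta v>0$, not a quantitative bound like $-\Delta v\ge c\,v^{(n+2)/(n-2)}$, so the integral argument you sketch does not obviously close. The cleaner route (used in the paper) is: $\bar\lambda(x_0)=\infty$ for one $x_0$ forces $\alpha=\infty$, hence $\bar\lambda(x)=\infty$ for every $x$, hence by the calculus lemma (Corollary~\ref{lem-app1}) $v$ is constant; but then $A^v=0$, $0\in\partial\Gamma$, and $f(0,\dots,0)=0\ne1$.
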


\begin{THM}[\cite{Li09-CPAM}]\label{TheoremADeg}
 Let $\Gamma$ satisfy \eqref{01} and \eqref{02},
and
let $0< v\in C^{0,1}_{loc}(\RR^n \setminus \{0\})$ satisfy 
\begin{equation}
\lambda(A^v) \in \partial \Gamma \mbox{ on }\ \RR^n \setminus \{0\}
\label{eqB0deg}
\end{equation}
in the viscosity sense (see Definition \ref{Def:DegVisSol} below). Then $v$ is radially symmetric about the origin and $v(r)$ is non-increasing in $r$.
\end{THM}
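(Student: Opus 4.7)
The natural approach is the method of moving spheres, exploiting the conformal invariance \eqref{Eq:CIProp}. For $x \in \RR^n$ and $\lambda > 0$, let $\varphi_{x,\lambda}(y) = x + \lambda^2(y-x)/|y-x|^2$ denote inversion about $\partial B_\lambda(x)$, and let $v_{x,\lambda}$ denote the induced conformal transform given by \eqref{Eq:uvarphi}; by \eqref{Eq:CIProp}, $v_{x,\lambda}$ again satisfies $\lambda(A^{\cdot}) \in \partial\Gamma$ in the viscosity sense on its domain of definition. Since $v$ may be singular only at the origin, the natural ``admissible'' M\"obius transformations are those respecting this singular point, and the role of the moving spheres is to produce, for each $x \ne 0$, one-sided comparisons between $v$ and $v_{x,\lambda}$ up to a critical radius $\bar\lambda(x)$ controlled by the distance $|x|$.

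The crucial analytic ingredient is a strong comparison principle tailored to the degenerate operator in the viscosity sense: if $u, w \in C^{0,1}_{loc}(U)$ are two viscosity solutions of $\lambda(A^{\cdot}) \in \partial\Gamma$ on a connected open set $U$, $w \le u$ in $U$, and equality holds at some interior point, then $w \equiv u$ in $U$. No standard linearization is available here because the operator is genuinely degenerate on $\partial\Gamma$; instead I would argue entirely in the viscosity framework, using the embedding $\Gamma \subset \Gamma_1$ from \eqref{02} to obtain the weak ellipticity $\sum\lambda_i(A^u) \ge 0$, using the inner cone $\Gamma_n \subset \Gamma$ to produce admissible perturbations, and running a doubling-of-variables argument adapted to the conformal Hessian to propagate the touching point through $U$.

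With this tool in hand, the moving-spheres procedure is essentially standard. Initiation at small $\lambda$ follows from the prefactor $\lambda^{n-2} \to 0$ together with local positivity of $v$ away from $0$; closure proceeds by defining
\[
\bar\lambda(x) = \sup\bigl\{\lambda \in (0, |x|) : v_{x,\mu} \le v \text{ on } \RR^n \setminus (\overline{B_\mu(x)} \cup \{0\}) \text{ for all } 0 < \mu \le \lambda\bigr\}
\]
and ruling out $\bar\lambda(x) < |x|$: if this held, the strong comparison principle (together with the trivial equality on $\partial B_{\bar\lambda(x)}(x)$) would force $v_{x,\bar\lambda(x)} \equiv v$ on the exterior region, contradicting the fact that $v_{x,\bar\lambda(x)}$ has a singularity at the interior point $\varphi_{x,\bar\lambda(x)}^{-1}(0) \ne 0$ while $v$ is locally Lipschitz there. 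The resulting one-sided inequality, valid for every $x \ne 0$ and every $\lambda < |x|$, yields (via the involution $\varphi_{x,\lambda}^{2} = \mathrm{id}$) an equivalent reversed inequality on the interior ball, and combining these with the freedom in choosing $x$ along rays through $0$ and along perpendicular bisectors of pairs of equidistant points produces radial symmetry about $0$ together with monotonicity of $r \mapsto v(r)$. The principal obstacle is establishing the strong comparison principle for the genuinely degenerate viscosity equation $\lambda(A^{\cdot}) \in \partial\Gamma$; once this is in place, the rest is an adaptation of the classical moving-spheres machinery to the present singular-point setting.
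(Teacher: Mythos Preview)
This theorem is quoted from \cite{Li09-CPAM} and is not proved in the present paper. What the paper does prove is the variant Theorem~\ref{proposition1-deg}, whose hypotheses supply an approximating sequence $v_k$ of \emph{nondegenerate} solutions $f(\lambda(A^{v_k}))=1$; the moving spheres are then run on the $v_k$, so that the ordinary strong maximum principle and Hopf lemma are available, and if the sphere stops at $\bar\lambda_k(x)<|x|$ a touching point is found on the outer boundary $\partial B_{R_k}(0)$ (see the proof of Lemma~\ref{lem-0.3deg}). Passing to the limit and combining with Lemma~\ref{Lem:lamCont} one shows $v_*$ would be a bubble on an open set, which is then contradicted by the equation. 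Thus the paper deliberately avoids any comparison principle for the degenerate operator.

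Your plan has the right skeleton---moving spheres followed by Lemma~\ref{lem-app1Sing}---but the gap you name is genuine and your sketch does not close it. A doubling-of-variables argument for $\lambda(A^{\cdot})\in\partial\Gamma$ faces the basic obstacle that on $\partial\Gamma$ the operator has no strict monotonicity in any direction; the inclusion $\Gamma\subset\Gamma_1$ yields only superharmonicity, which gives neither a strong maximum principle nor a Hopf lemma for the full nonlinear equation. In \cite{Li09-CPAM} this is dealt with by a substantially more delicate device, not a generic viscosity argument.

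Your closure step is also faulty independently of that. Even granting the strong comparison principle you state, deducing $v_{x,\bar\lambda(x)}\equiv v$ requires an interior touching point, which you have not produced; equality on $\partial B_{\bar\lambda(x)}(x)$ is a boundary touching and calls for a Hopf-type lemma, a separate (and for degenerate equations, harder) statement. And the contradiction you propose does not work: since $\bar\lambda(x)<|x|$, the point $\varphi_{x,\bar\lambda(x)}^{-1}(0)=\varphi_{x,\bar\lambda(x)}(0)$ lies \emph{inside} $B_{\bar\lambda(x)}(x)$, while the hypothesis $v\in C^{0,1}_{loc}(\RR^n\setminus\{0\})$ does not force $v$ to be singular at $0$ at all---$v$ may extend Lipschitz-continuously across the origin. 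The correct contradiction from full equality would be to recognize $v$ as a standard bubble, for which $\lambda(A^v)\in\Gamma_n\subset\Gamma$ and hence not in $\partial\Gamma$.
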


For $(f, \Gamma)=(\frac{1}{2n}\sigma_1, \Gamma_1)$, equation \eqref{eqB2} is
the critical exponent equation $-\Delta u= n(n-2)u^{ (n+2) /(n-2) }$ and
Theorem \ref{TheoremA} was proved by Caffarelli, Gidas and Spruck \cite{CGS}.
See also Gidas, Ni and Nirenberg \cite{G-N-N-1981} under some decay assumption of $u$ at infinity.
For $(f, \Gamma)=(\sigma_2^{1/2}, \Gamma_2)$ in $\RR^4$ and $v \in C^{1,1}_{loc}(\RR^4)$, 
the result was proved by Chang, Gursky and Yang \cite{CGY03-IP}.
 
\medskip
In fact we need a stronger version of Theorem \ref{TheoremA} (see Theorem \ref{proposition1}) and a variant of Theorem \ref{TheoremADeg} (see Theorem \ref{proposition1-deg}). For simplicity, readers are advised that in the main body of the paper 
\[
\parbox{.8\textwidth}{all theorems, propositions and lemmas hold under \eqref{01}-\eqref{FU-1a}, instead of the stated weaker hypotheses on $(f,\Gamma)$.}
\]

\begin{thm} Let $(f, \Gamma)$ satisfy 
\begin{align}
&\Gamma\subset \RR^n\ \mbox{be an open symmetric set},
\label{01weak}\\
&\Gamma\subset
\Gamma_1 \text{ and } \Gamma \cap \{\lambda + t\mu:  t > 0\} \text{ is convex for all $\lambda \in \Gamma, \mu \in \Gamma_n$},
\label{02weakX}\\
&f \in C^1(\Gamma) \text{ is symmetric in } \lambda_i \text{ and } \frac{  \partial f}{ \partial \lambda_i}>0\ 
\mbox{in}\ \Gamma\ \forall\ i.\label{0304weak}
\end{align}
 Assume that
  $0<v\in C^0(\RR^n)$,
$0<v_k\in C^2(B_{R_k}(0))$,
 $R_k\to\infty$,
\begin{equation}
f(\lambda(A^{v_k}))=1
\quad \ \ \mbox{in}\ B_{R_k}(0),
\label{ve}
\end{equation}
and 
\begin{equation}
v_k\to v\ \mbox{in}\ C^0_{loc}(\RR^n).
\label{1new}
\end{equation}
Then either $v$ is constant or $v$ is of the form
\eqref{vform}
for some $\bar x\in \RR^n$ and some positive constants
$a$ and $b$.

If it holds in addition that
\begin{align}
&\text{there exists $t_0 > 0$ such that $f(t_0, \ldots, t_0) < 1$,}
\label{Eq:ActaSIdiag}
\end{align}
then $v$ cannot be constant. If it holds further that
\begin{equation}
\Gamma + \Gamma_n = \{\lambda + \mu: \lambda \in \Gamma, \mu \in \Gamma_n\} \subset \Gamma,
	\label{02weakY}
\end{equation}
then the constants $a$ and $b$ in \eqref{vform} satisfy $(2b^2a^{-2}, \cdots, 2b^2a^{-2}) \in \Gamma$ and $$f(2b^2a^{-2}, \cdots, 2b^2a^{-2})=1.$$
\label{proposition1}
\end{thm}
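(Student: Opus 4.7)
The plan is to pass equation \eqref{ve} to the limit in the viscosity sense and then run the method of moving spheres on the limit $v$, exploiting the conformal invariance \eqref{Eq:CIProp} of the operator $u \mapsto \lambda(A^u)$. The moving spheres analysis will produce the expected dichotomy: either $v$ is radially symmetric about some point of $\RR^n$ (yielding the bubble form \eqref{vform}), or $v$ is constant.

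First I would verify that $v$ is a viscosity solution of $f(\lambda(A^v)) = 1$ on all of $\RR^n$. For a smooth $\phi$ touching $v$ strictly from above at some $x_0$, the $C^0_{loc}$ convergence of $v_k$ produces points $x_k \to x_0$ at which $v_k - \phi$ attains a local maximum. The second-order conditions at $x_k$, the componentwise monotonicity in \eqref{0304weak} (which encodes degenerate ellipticity of $M \mapsto f(\lambda(M))$), and the equation $f(\lambda(A^{v_k}(x_k))) = 1$ together give $f(\lambda(A^\phi(x_k))) \geq 1$ whenever the relevant eigenvalues remain in $\overline\Gamma$; the slice-convexity hypothesis \eqref{02weakX} is tailored precisely to preserve admissibility under this comparison. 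Sending $k \to \infty$ and using continuity of $f$ on $\overline\Gamma$ yields the subsolution inequality at $x_0$; the supersolution case is dual.

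Next, I would apply the method of moving spheres to $v$. For each $\bar x \in \RR^n$ and $\mu > 0$, let $v_{\bar x, \mu}$ denote the Kelvin transform of $v$ about the sphere of radius $\mu$ centered at $\bar x$. By \eqref{Eq:CIProp}, $v_{\bar x, \mu}$ is also a viscosity solution of the same equation on $\RR^n \setminus \{\bar x\}$. A Taylor expansion near $\bar x$, using positivity and continuity of $v$, shows that $v_{\bar x, \mu} \leq v$ on $\RR^n \setminus \overline{B_\mu(\bar x)}$ for small $\mu$. Let $\bar\mu(\bar x) \in (0, \infty]$ be the supremum of such $\mu$. The slice-convexity in \eqref{02weakX} yields a viscosity comparison principle strong enough to handle the comparison of $v$ with $v_{\bar x, \mu}$ (as in \cite{LiLi05}), and a Hopf-type boundary lemma at the critical sphere forces $v \equiv v_{\bar x, \bar\mu(\bar x)}$ whenever $\bar\mu(\bar x) < \infty$. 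A standard calculus lemma then yields the desired dichotomy: if $\bar\mu(\bar x) < \infty$ for some $\bar x$, $v$ is of the form \eqref{vform}; otherwise $\bar\mu(\bar x) = \infty$ for every $\bar x$ and $v$ is constant.

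Finally, for the additional assertions: under \eqref{Eq:ActaSIdiag}, if $v \equiv c$ then $v_k \to c$ uniformly on compacta; since $\Gamma \subset \Gamma_1$ makes each $v_k$ superharmonic, the strong maximum principle together with the Harnack-type estimates available for this conformal operator force $v_k$ to be nearly constant on any given compact set, so $\lambda(A^{v_k}) \to 0$, while monotonicity of $f$ and $f(t_0, \ldots, t_0) < 1$ give $\lim f(\lambda(A^{v_k})) < 1$, contradicting $f(\lambda(A^{v_k})) \equiv 1$. For the last claim under \eqref{02weakY}, a direct computation of the conformal Hessian of the bubble in \eqref{vform} gives $A^v \equiv \tfrac{2b^2}{a^2} I$, hence $\lambda(A^v) \equiv (2b^2a^{-2}, \ldots, 2b^2a^{-2})$; hypothesis \eqref{02weakY} combined with the fact that $\lambda(A^{v_k}) \in \Gamma$ places this diagonal vector inside $\Gamma$, and $C^2_{loc}$ convergence of $v_k$ to the now-smooth $v$ (guaranteed by interior estimates) passes $f(\lambda(A^{v_k})) = 1$ to $f(2b^2a^{-2}, \ldots, 2b^2a^{-2}) = 1$. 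The main obstacle will be establishing a comparison principle sharp enough for the moving spheres step under the weak hypotheses \eqref{01weak}--\eqref{0304weak}: without full convexity of $\Gamma$ or concavity of $f$, the critical-sphere comparison must exploit the precise slice-convexity of \eqref{02weakX} in concert with the geometry of Kelvin inversions.
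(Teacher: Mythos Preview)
Your strategy has a genuine gap at its core. You propose to pass to the viscosity limit $v$ and then run moving spheres on $v$ using a viscosity comparison principle and Hopf lemma. But a Liouville theorem for \emph{viscosity} solutions of \eqref{eqB0} under hypotheses this weak is exactly the open Conjecture stated in the paper; no strong maximum principle or Hopf lemma for viscosity solutions is available here without concavity of $f$ or convexity of $\Gamma$. The paper avoids this entirely by running the moving spheres on the classical $C^2$ solutions $v_k$ themselves: one defines $\bar\lambda_k(x)$ for each $k$, uses the \emph{classical} strong maximum principle and Hopf lemma (which apply to $v_k$ and its Kelvin transforms) to locate touching points on $\partial B_{R_k}(0)$, and only then passes to the limit $\bar\lambda(x)=\liminf_k \bar\lambda_k(x)$. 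This delivers the moving-sphere data for $v$ without ever needing a viscosity comparison theory. A preliminary gradient estimate (Theorem \ref{SoftGEst}) is needed just to start the spheres uniformly in $k$, and you omit this step as well: your ``Taylor expansion'' to start the spheres requires at least Lipschitz regularity of $v$, which is not given.

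Two further places where you assume too much: for the non-constant assertion under \eqref{Eq:ActaSIdiag}, you write ``$\lambda(A^{v_k})\to 0$'', and for the final assertion you invoke ``$C^2_{loc}$ convergence of $v_k$ to $v$ (guaranteed by interior estimates)''. Neither is available: only $C^0_{loc}$ (upgraded to $C^{0,1}_{loc}$ via Theorem \ref{SoftGEst}) convergence holds, since no $C^2$ estimates exist without concavity of $f$. The paper handles both points by quadratic-touching arguments: one perturbs $v$ by $\pm\delta|x|^2$, locates touching points $x_k\to 0$ where $v_k$ agrees with the perturbation to second order, and reads off the matrix inequality $A^{v_k}(x_k)\lessgtr A^{\varphi}(x_k)$ directly. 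This is where \eqref{02weakX}, \eqref{Eq:ActaSIdiag}, and \eqref{02weakY} actually enter, and it is how one proves $f(2b^2a^{-2},\ldots,2b^2a^{-2})=1$ without ever knowing that $v_k\to v$ in $C^2$.
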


\begin{rem}
In Theorem \ref{proposition1}, if condition \eqref{Eq:ActaSIdiag} is dropped, the case that $v$ is constant can occur. See the counterexample in Remark \ref{Rem:ASD}.
\end{rem}

\begin{thm} Let $(f, \Gamma)$ satisfy \eqref{01weak}-\eqref{0304weak}. Assume that
  $v_*\in C^0(\RR^n \setminus \{0\})$,
$0<v_k\in C^2(B_{R_k}(0) \setminus \{0\})$,
 $R_k\to\infty$,
\[
f(\lambda(A^{v_k}))=1
\quad \ \ \mbox{in}\ B_{R_k}(0) \setminus \{0\},
\]
and, for some $M_k \rightarrow \infty$,
\begin{equation}
M_k\,v_k\to v_*\ \mbox{in}\ C^0_{loc}(\RR^n \setminus \{0\}).
\label{1newdeg}
\end{equation}
Then $v_*$ is radially symmetric about the origin, i.e. $v_*(x) = v_*(|x|)$. In particular, if $v_k \in C^2(B_{R_k}(0))$, $f(\lambda(A^{v_k}))=1$ in $B_{R_k}(0)$ and $M_kv_k$ converges to $v_*$ in $C^0_{loc}(\RR^n)$, then $v_*$ is constant.
\label{proposition1-deg}
\end{thm}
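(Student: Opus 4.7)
The plan is to show that the limit $v_*$ is a viscosity solution of the degenerate equation $\lambda(A^{v_*})\in\partial\Gamma$ on $\RR^n\setminus\{0\}$, in the sense of Definition~\ref{Def:DegVisSol}, and then invoke Theorem~\ref{TheoremADeg}. The starting point is the scaling identity $A^{cu}=c^{-4/(n-2)}A^u$ for any positive constant $c$, which is immediate from the definition of $A^u$. Setting $w_k:=M_kv_k$, this gives
\[
f\bigl(M_k^{4/(n-2)}\lambda(A^{w_k})\bigr)=1\qquad\text{in } B_{R_k}(0)\setminus\{0\},
\]
i.e., $\lambda(A^{w_k})$ lies on the level set $M_k^{-4/(n-2)}\{f=1\}$, which collapses toward $\partial\Gamma$ as $M_k\to\infty$; meanwhile $w_k\to v_*$ in $C^0_{loc}(\RR^n\setminus\{0\})$.

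I would then pass to the viscosity limit by the standard touching-point method. For the supersolution part, if $\varphi\in C^2$ touches $v_*$ from below at $x_0\in\RR^n\setminus\{0\}$, find $x_k\to x_0$ where $w_k-\varphi$ has a local minimum and set $d_k:=w_k(x_k)-\varphi(x_k)\to 0$. The Hessian inequality $\nabla^2w_k(x_k)\ge\nabla^2\varphi(x_k)$ combined with the minus sign in front of $\nabla^2u$ in $A^u$ gives $A^{w_k}(x_k)\le A^{\varphi+d_k}(x_k)$ as symmetric matrices. Sorting eigenvalues, the difference $\lambda^\uparrow(A^{\varphi+d_k})(x_k)-\lambda^\uparrow(A^{w_k})(x_k)$ lies in $\overline{\Gamma_n}$; since $\lambda(A^{w_k})\in\Gamma$ and $\overline\Gamma+\overline{\Gamma_n}\subset\overline\Gamma$ (both are closed convex cones and $\overline{\Gamma_n}\subset\overline\Gamma$), symmetry of $\Gamma$ forces $\lambda(A^{\varphi+d_k})(x_k)\in\overline\Gamma$, and passing to the limit yields $\lambda(A^\varphi)(x_0)\in\overline\Gamma$. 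The subsolution part is the mirror argument: if $\varphi$ touches $v_*$ from above at $x_0$ with $\lambda(A^\varphi)(x_0)\in\Gamma$, the flipped matrix comparison $A^{w_k}(x_k)\ge A^{\varphi+d_k}(x_k)$ together with monotonicity of $f$ and continuity gives $f(\lambda(A^{w_k})(x_k))\ge f(\lambda(A^\varphi)(x_0))/2>0$ for large $k$; coupling this with the equation $f(M_k^{4/(n-2)}\lambda(A^{w_k}))=1$ and hypothesis~\eqref{FU-1a}, which forces $\{f=1\}\subset\{\sum_i\lambda_i\ge\delta\}$, one extracts a contradiction as $M_k\to\infty$.

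Once $v_*$ is shown to be a viscosity solution of $\lambda(A^{v_*})\in\partial\Gamma$ on $\RR^n\setminus\{0\}$ (with the $C^{0,1}_{loc}$ regularity required by Theorem~\ref{TheoremADeg} furnished by interior Lipschitz estimates for the rescaled equation applied to $w_k$), Theorem~\ref{TheoremADeg} immediately yields radial symmetry about the origin. For the final ``in particular'' statement, assume $v_k\in C^2(B_{R_k}(0))$ and $M_kv_k\to v_*$ in $C^0_{loc}(\RR^n)$, fix any $\xi\in\RR^n$, and apply the first part of the theorem to the translated sequence $v_k^\xi(x):=v_k(x+\xi)$: this sequence is smooth on $B_{R_k-|\xi|}(0)$ for large $k$, satisfies the same equation by translation invariance of the conformal Hessian, and $M_kv_k^\xi\to v_*(\cdot+\xi)$ in $C^0_{loc}(\RR^n)$. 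The limit $v_*(\cdot+\xi)$ is therefore radially symmetric about $0$, so $v_*$ is radially symmetric about $\xi$; since $\xi\in\RR^n$ is arbitrary, $v_*$ must be constant.

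The principal obstacle is the subsolution part of the viscosity-limit step. One must simultaneously exploit monotonicity of $f$ (to convert the matrix comparison at touching points into a pointwise lower bound on $f(\lambda(A^{w_k}))$) and hypothesis~\eqref{FU-1a} (to force the values of $f$ on the rescaled level set $M_k^{-4/(n-2)}\{f=1\}$ to vanish as $M_k\to\infty$), while carefully controlling the potentially unbounded behaviour of $\lambda(A^{v_k})(x_k)$ along the noncompact hypersurface $\{f=1\}$.
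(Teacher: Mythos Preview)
You correctly identify the subsolution step as the principal obstacle, but the gap there is genuine and cannot be closed under the stated hypotheses. The paper itself flags this in the remark immediately following Theorem~\ref{proposition1-deg}: when $f$ is not homogeneous, $v_*$ need not be a viscosity solution of $\lambda(A^{v_*})\in\partial\Gamma$. Your proposed contradiction does not go through. From $f(\mu_k)\ge c>0$ (where $\mu_k=\lambda(A^{w_k}(x_k))$) and $f(M_k^{4/(n-2)}\mu_k)=1$, hypothesis~\eqref{FU-1a} yields only $\sigma_1(\mu_k)\ge\delta M_k^{-4/(n-2)}\to 0$, a \emph{lower} bound that obstructs nothing. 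The two conditions are compatible for inhomogeneous $f$: the level set $\{f=1\}$ is noncompact and nothing in the hypotheses forces $f(t\lambda)\to\infty$ along rays in $\Gamma$. The matrix comparison gives (after sorting) $\mu_k\ge\lambda(A^{\varphi+d_k}(x_k))\to\lambda_0:=\lambda(A^\varphi(x_0))$, hence $1=f(M_k^{4/(n-2)}\mu_k)\ge f(M_k^{4/(n-2)}\lambda_0')$; without growth of $f$ along the ray through $\lambda_0$ this is no contradiction. (The contradiction at the end of Section~\ref{sec:Tp1deg} succeeds only because the moving-sphere analysis has already pinned down $v_*$ locally as a standard bubble, so the test function there has $A^\varphi(x_0)$ a positive multiple of the identity; then $M_k^{4/(n-2)}A^{\varphi+d_k}(x_k)$ eventually dominates any fixed $\hat\lambda_*\in\Gamma$ with $f(\hat\lambda_*)>1$.)

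There are secondary issues as well. The theorem assumes only \eqref{01weak}--\eqref{0304weak}, so $\Gamma$ is not a cone; hence $\lambda(A^{w_k})=M_k^{-4/(n-2)}\lambda(A^{v_k})$ need not lie in $\Gamma$, your $\bar\Gamma+\bar\Gamma_n\subset\bar\Gamma$ step is unjustified, Theorem~\ref{TheoremADeg} (which requires \eqref{01}--\eqref{02}) is unavailable, and~\eqref{FU-1a} is not among the hypotheses. Also, obtaining $C^{0,1}_{loc}$ regularity of $v_*$ via Theorem~\ref{TheoremB} would be circular, as its proof in this paper relies on Theorem~\ref{proposition1-deg}; you must use Theorem~\ref{SoftGEst} instead. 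The paper's proof avoids the viscosity-limit route entirely: it runs the method of moving spheres directly on the sequence $v_k$, passes the critical radii $\bar\lambda_k(x)$ to the limit, and only in the final contradiction step touches $v_*$ with a test function whose conformal Hessian is positive definite.
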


\begin{rem}
When $(f,\Gamma)$ satisfies \eqref{01}-\eqref{04} and an additional hypothesis that $f$ is homogeneous of positive degree, the function $v_*$ in Theorem \ref{proposition1-deg} is a viscosity solution of \eqref{eqB0deg} and the conclusion follows from Theorem \ref{TheoremADeg}. However, when $f$ is not homogeneous, $v_*$ is not necessarily a viscosity solution of \eqref{eqB0deg}.
\end{rem}

It is not difficult to see that, under \eqref{01}-\eqref{04}, the function $v$ in Theorem \ref{proposition1} is a viscosity solution of \eqref{eqB0} (see Remark \ref{Cor:VCConf}). We have the following conjecture.

\begin{conj}
Let  $(f, \Gamma)$ satisfy \eqref{01}-\eqref{04},
and let  $0<v\in C^0_{loc}(\RR^n)$
be a viscosity solution of  \eqref{eqB0}.
Then $v$ is of the form \eqref{vform}
for some $\bar x\in \RR^n$ and some positive constants
$a$ and $b$.
\end{conj}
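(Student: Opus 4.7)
The plan is to either reduce to Theorem~\ref{TheoremA} by upgrading the regularity of $v$ to $C^2$, or to carry out a symmetry argument directly at the viscosity level. Full regularity is ambitious since $f$ is not assumed concave, so Evans--Krylov theory does not apply; however, local Lipschitz bounds can plausibly be obtained by a blow-up/Kelvin argument exploiting the conformal invariance \eqref{Eq:CIProp}, because any rescaling along a M\"obius sequence remains a solution of the same equation and Theorem~\ref{proposition1} classifies the possible limits. Upgrading from Lipschitz to $C^{1,\alpha}$, and then to $C^2$ in the absence of concavity, is the principal obstruction in this route.

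The more promising approach is a moving-spheres argument performed directly at the viscosity level, analogous to the method used in the classical setting of Theorem~\ref{TheoremA}. For each $y \in \RR^n$ and $\mu > 0$ one sets
\begin{equation*}
v_{y,\mu}(x) = \left(\frac{\mu}{|x - y|}\right)^{n - 2} v\!\left(y + \frac{\mu^2 (x - y)}{|x - y|^2}\right),
\end{equation*}
which by \eqref{Eq:uvarphi}--\eqref{Eq:CIProp} is again a viscosity solution of $f(\lambda(A^{\cdot})) = 1$ on $\RR^n \setminus \{y\}$. First I would show that for each $y$ there exists $\mu_1(y) > 0$ such that $v_{y,\mu} \le v$ outside $\overline{B_\mu(y)}$ for all $0 < \mu \le \mu_1(y)$; this follows from a weak comparison principle together with the positivity of $v$ near $y$ and the correct decay rate of $v_{y,\mu}$ at infinity. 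Then I would set $\bar\mu(y) = \sup \mu_1(y)$ and seek to establish the dichotomy that either $\bar\mu(y) = \infty$ for some $y$ (which one rules out by examining behavior at infinity, since an infinite sliding radius forces $v$ to have decay incompatible with the equation), or $v_{y,\bar\mu(y)} \equiv v$ on $\RR^n \setminus \{y\}$ for every $y$. The latter conclusion then implies the form \eqref{vform} by the standard calculus lemma that characterizes functions invariant under a one-parameter family of Kelvin transforms centered at every point.

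The hard part will be the strong comparison principle for continuous viscosity sub/supersolutions: if $u_1 \le u_2$ on an open set, both solve the equation, and $u_1(x_0) = u_2(x_0)$ at an interior point, then $u_1 \equiv u_2$ in a neighborhood of $x_0$. A weak comparison can usually be extracted by Jensen's doubling of variables combined with sup/inf convolutions, but the propagation-of-equality version is genuinely subtle here precisely because the equation is neither uniformly elliptic on $\partial\Gamma$ nor assumed concave in $\lambda$---the two usual routes (Hopf-type linearization and Evans--Krylov propagation) are both unavailable without extra structure on $f$. Without such a tool the second alternative in the moving spheres step cannot be enforced, which is presumably why the statement is offered only as a conjecture. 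A genuine resolution would likely require either a new comparison technique adapted to the conformal structure, or a regularity theorem that uses conformal invariance in an essential way to bypass the absence of concavity.
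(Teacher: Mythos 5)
This statement is posed in the paper as an open \emph{conjecture}; the paper itself gives no proof, and indeed explicitly frames the result as something not yet established. You correctly recognized this: your ``proposal'' is really a diagnostic of where the standard strategies break down, and it ends by acknowledging that the absence of a strong comparison/propagation-of-equality principle at the viscosity level is precisely what keeps the statement conjectural. That reading matches the paper. To be precise about what the paper does have: Theorem~\ref{TheoremA} classifies $C^2$ solutions; Theorem~\ref{proposition1} classifies $C^0$ limits of $C^2$ solutions on growing balls (which are viscosity solutions by Remark~\ref{Cor:VCConf}); but an arbitrary positive continuous viscosity solution of \eqref{eqB0} need not a priori arise as such a limit, and that gap is exactly the content of the conjecture. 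Your identification of the two obstructions --- no Evans--Krylov regularity without concavity of $f$, and no Hopf-type strong maximum principle for merely continuous viscosity sub/supersolutions of a degenerate-elliptic conformally invariant operator --- is accurate and aligned with why the authors left this open. Nothing to correct; you did not overclaim a proof, and no proof exists in the paper to compare against.
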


The notion of 
viscosity solutions given below is consistent with that in \cite{Li09-CPAM}.

\begin{Def}\label{Def:fVisSol} A positive continuous function
 $v$ in an open set $\Omega\subset \RR^n$
is a viscosity supersolution (respectively, subsolution) of 
$$
f(\lambda(A^v))=1, \ \ \mbox{in}\ \Omega,
$$
when the following holds: if $x_0\in \Omega$,
$\varphi\in C^2(\Omega)$, $(v-\varphi)(x_0)=0$,
and $v-\varphi\ge 0$ near $x_0$, then
$$
f(\lambda(A^\varphi(x_0))\ge 1
$$
(respectively, if $(v-\varphi)(x_0)=0$,
and $v-\varphi\le 0$ near $x_0$, then either 
$\lambda(A^\varphi(x_0))  \in \RR^n\setminus \overline \Gamma$
or
$
f(\lambda(A^\varphi(x_0)))\le 1
$).  We say that $v$ is a viscosity solution if it is both
a viscosity supersolution and a viscosity subsolution.
\end{Def}

\begin{Def}\label{Def:DegVisSol} A positive continuous function
 $v$ in an open set $\Omega\subset \RR^n$
is a viscosity supersolution (respectively, subsolution) of 
$$
\lambda(A^v) \in \partial \Gamma, \ \ \mbox{in}\ \Omega,
$$
when the following holds: if $x_0\in \Omega$,
$\varphi\in C^2(\Omega)$, $(v-\varphi)(x_0)=0$,
and $v-\varphi\ge 0$ near $x_0$, then
$$
\lambda(A^\varphi(x_0) \in \bar \Gamma
$$
(respectively, if $(v-\varphi)(x_0)=0$,
and $v-\varphi\le 0$ near $x_0$, then either 
$\lambda(A^\varphi(x_0))  \in \RR^n\setminus \overline \Gamma$).  We say that $v$ is a viscosity solution if it is both
a viscosity supersolution and a viscosity subsolution.
\end{Def}

It is clear that for $C^2$ functions the notion of viscosity solutions and classical solutions coincide. Also, viscosity super- and sub-solutions are stable under uniform convergence, see Appendix \ref{Sec:AppVS}.

Note that for any $\lambda = (\lambda_1, \cdots, \lambda_n) \in \Gamma$, $t_1 = \max\lambda_i + 1 > 0$ and $(t_1, \cdots, t_1) \in \lambda + \Gamma_n$. In other words, the sets $\lambda + \Gamma_n$ have non-empty intersection with the ray $\{(t, \cdots, t): t > 0\}$. Thus, if $f^{-1}(1) \neq \emptyset$, then, in view of \eqref{04}, there exists some $c>0$ such that
$f(c,\cdots, c)=1$.  In such situation,  working with 
$\tilde f(\lambda):=f(\frac c2 \lambda)$ instead of $f$, 
we
 may assume without loss of generality the following normalization
condition
\begin{equation}
f(2,\cdots, 2)=1.
\label{F1}
\end{equation}

Let
$$
U(x):= \left(\frac 1{1+|x|^2}\right)^{ \frac {n-2}2 },\qquad x\in \RR^n.
$$
A calculation gives
$$
A^U\equiv 2I.
$$

With the normalization \eqref{F1}, $U$ satisfies
$$
f(\lambda(A^U))=1 \quad\mbox{on}\ \RR^n.
$$ 

For $\bar x\in \RR^n$ and $\mu>0$, let
 $$U^{\bar x, \mu}(x)= \mu U(\mu^{\frac 2{n-2}}(x-\bar x)) = \Big(\frac{\mu^{\frac 2{n-2}}}{1 + \mu^{\frac 4{n-2}} |x- \bar x|^2}\Big)^{\frac{n-2}{2}}.
$$
Note that, in the sense of \eqref{Eq:uvarphi}, $U^{\bar x, \mu} = U_\varphi$ with $\varphi(x) = \mu^{\frac{2}{n-2}}\,(x - \bar x)$. Hence, by the conformal invariance \eqref{Eq:CIProp}, for any $\bar x\in \RR^n$ and $\mu>0$, 
\[
f(\lambda(A^{U_{\bar x, \mu}})) = 1 \text{ on } \RR^n.
\]

\begin{thm}\label{theorem4X}
 Let $(f, \Gamma)$ satisfy \eqref{01weak}-\eqref{0304weak}, \eqref{Eq:ActaSIdiag}-\eqref{02weakY}, \eqref{FU-1a}
 and the normalization condition \eqref{F1}. Let $\epsilon \in (0,1/2]$. There exist constants $\bar m = \bar m(f,\Gamma) \geq 1, \Cfive = \Cfive(f,\Gamma) > 1$, $\daspr = \daspr(\epsilon, f,\Gamma) > 0, \Caspr = \Caspr(\epsilon, f,\Gamma) > 1$  such that for any positive $u \in C^2(B_3(0))$ satisfying\footnote{Note that, $\bar m$ and $\Cfive$ are independent of $\epsilon$.}
 \[
f(\lambda(A^u)) = 1 \text{ in } B_{3}(0) \text{ and } \sup_{B_1(0)} u \geq \Caspr,
\]
there exists $\{x^1, \cdots,  x^m\} \subset B_{2}(0)$ with $1 \leq m \leq \bar m$ satisfying
\begin{enumerate}[(i)]
\item $u(x^1) \geq \sup_{B_1(0)} u$,
\item $|x^i-x^j|\ge \frac{1}{\Cfive}$ for all $1 \leq i\ne j \leq m$,
\item $\frac 1{\Cfive} \le
\frac {   u(x^i)  }{  u(x^j) }\le \Cfive$
 for all $1 \leq i, j \leq m$,
 \item $|u(x) - U^{x^i, u(x^i)}(x)| \leq \epsilon U^{x^i, u(x^i)}(x)$ for all $1 \leq i \leq m$, $x\in B_{\daspr}(x^i)$,
\item $\frac {1}{\Cfive \delta_*^{n-2} u(x^1)}\le u(x) \le \frac {\Cfive} { \delta_*^{n-2}u(x^1)}$ for all $x \in B_{\frac{3}{2}}(0)\setminus \cup_{ i=1}^m
B_{\daspr }(x^i)$,
\item $u(x^i) = \sup_{B_{\daspr}(x^i)} u$.
\end{enumerate}
\end{thm}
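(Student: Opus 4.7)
The plan is to argue by contradiction and compactness, building the bubble decomposition via an iterative selection procedure that leverages the conformal invariance \eqref{Eq:CIProp} together with the Liouville theorems \ref{proposition1} and \ref{proposition1-deg}. Fix $\epsilon \in (0, 1/2]$ and suppose the conclusion fails with any candidate universal constants; then I can find a sequence $u_k \in C^2(B_3(0))$ of solutions of $f(\lambda(A^{u_k})) = 1$ with $M_k := \sup_{B_1(0)} u_k \to \infty$, for which every attempted decomposition satisfying (i)--(vi) fails for some quantitative reason (either too many centers, or a violated separation/comparability bound, or an approximation that is not $\epsilon$-close). The goal is to use \eqref{Eq:CIProp} together with the rigidity provided by Theorems \ref{proposition1}--\ref{proposition1-deg} to derive a contradiction.

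For the first bubble, I take $x^1_k \in \overline{B_1(0)}$ with $u_k(x^1_k) = M_k$ and introduce the conformal rescaling
\[
\tilde u_k(y) := M_k^{-1}\, u_k\bigl(x^1_k + M_k^{-2/(n-2)} y\bigr).
\]
By \eqref{Eq:CIProp}, $\tilde u_k$ again satisfies $f(\lambda(A^{\tilde u_k})) = 1$, now on the expanding ball $B_{R_k}(0)$ with $R_k = M_k^{2/(n-2)} \to \infty$, and $\tilde u_k(0) = 1 = \max \tilde u_k$. Local regularity for this class of equations allows me to extract a $C^0_{loc}$-subsequential limit $v$; Theorem \ref{proposition1}, whose hypothesis \eqref{Eq:ActaSIdiag} is guaranteed by \eqref{FU-1a}, then forces $v$ to be a standard bubble, and the pinning $v(0)=1=\max v$ together with the normalization \eqref{F1} identifies $v = U$. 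Undoing the rescaling gives (iv) at $x^1_k$ on a universal-radius ball $B_\daspr(x^1_k)$, and after adjusting $x^1_k$ to be a genuine local maximum of $u_k$ on that ball one obtains (vi).

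For further centers I iterate: so long as (v) fails on $B_{3/2}(0) \setminus \bigcup_{j \le i} B_\daspr(x^j_k)$ after $i$ centers are chosen, I select $x^{i+1}_k$ so as to maximize a scale-invariant defect such as $(\min_{j \le i}|x - x^j_k|)^{(n-2)/2}\,u_k(x)$, and rescale by $u_k(x^{i+1}_k)$ as above. The extremality of the defect makes the rescaled sequence non-concentrating off compact sets, so Theorem \ref{proposition1} again delivers a standard-bubble limit and hence (iv) at $x^{i+1}_k$. Property (ii) is enforced by the following dichotomy: if $|x^i_k - x^j_k|\,u_k(x^i_k)^{2/(n-2)} \to 0$ for some pair, the rescaling at $x^i_k$ sees two concentration points in the limit, contradicting the single-bubble conclusion of Theorem \ref{proposition1}; combined with (iii) this upgrades to the fixed Euclidean lower bound $1/\Cfive$. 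Property (iii) itself is obtained by contradiction: if $u_k(x^j_k)/u_k(x^i_k) \to 0$, then the rescaling around $x^i_k$ renormalized by $u_k(x^j_k)$ satisfies $M_k v_k \to v_*$ in the sense of \eqref{1newdeg}, and Theorem \ref{proposition1-deg} forces $v_*$ to be radial about $x^i_k$, which is incompatible with the presence of a distinct bubble at the image of $x^j_k$. The bound $m \le \bar m$ is then immediate from (ii) and the fact that all $x^i_k$ lie in $B_2(0)$.

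Estimate (v) in the complement follows from a Harnack/comparison argument for the equation: by (iv), $u_k$ on each $\partial B_\daspr(x^i_k)$ is pinned, up to a factor $1\pm\epsilon$, to $U^{x^i_k,u_k(x^i_k)}(\daspr) \asymp u_k(x^i_k)^{-1}\daspr^{-(n-2)}$; a Harnack-type inequality for $f(\lambda(A^u))=1$ then propagates this two-sided bound to $B_{3/2}(0) \setminus \bigcup_i B_\daspr(x^i_k)$, and the height comparability (iii) converts it into the statement of (v) in terms of $u(x^1)$. The main obstacle is the iterative step: unlike the classical Yamabe setting, no Pohozaev identity or energy bound is available here, so the separation (ii) and the comparability (iii) must be established by purely rigidity-based arguments that rely solely on Theorems \ref{proposition1} and \ref{proposition1-deg}, and identifying exactly which rescaling (by $u_k(x^{i+1}_k)$ for the non-degenerate extraction, by $u_k(x^j_k)$ for the degenerate-limit contradiction) matches the hypotheses of these Liouville theorems is the delicate core of the proof.
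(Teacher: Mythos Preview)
Your proposal has a genuine gap that undermines the entire iterative scheme. After rescaling around $x^1_k$ and applying Theorem~\ref{proposition1}, you obtain $\tilde u_k \to U$ in $C^0_{loc}(\RR^n)$. Undoing the rescaling, this says only that $u_k$ is $\epsilon$-close to $U^{x^1_k,u_k(x^1_k)}$ on balls of radius $R\,u_k(x^1_k)^{-2/(n-2)}$ for each fixed $R$---balls whose Euclidean radius shrinks to zero as $u_k(x^1_k)\to\infty$. Assertion (iv) demands the multiplicative closeness $|u_k-U^{x^i,u_k(x^i)}|\le\epsilon\,U^{x^i,u_k(x^i)}$ on a ball of \emph{fixed} radius $\daspr$, which in rescaled coordinates means $|\tilde u_k(y)-U(y)|\le\epsilon\,U(y)$ for all $|y|\le\daspr\,u_k(x^1_k)^{2/(n-2)}\to\infty$. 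This is strictly stronger than $C^0_{loc}$ convergence and does not follow from Theorem~\ref{proposition1}; it is exactly the content of the quantitative Liouville result (Proposition~\ref{prop-C16-1new}/\ref{prop:BbUp1}), whose proof occupies most of Section~\ref{sec:T4X} and requires the decay and small-energy lemmas (Lemmas~\ref{lem0.1}--\ref{lem-upperbound}).

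Because (iv) only holds on shrinking balls in your argument, the downstream steps collapse. Your dichotomy for (ii) yields at best $|x^i_k-x^j_k|\,u_k(x^i_k)^{2/(n-2)}\ge c$, i.e.\ separation at the bubble scale, and combining with (iii) still gives only $|x^i_k-x^j_k|\gtrsim u_k(x^1_k)^{-2/(n-2)}\to 0$, not the fixed lower bound $1/\Cfive$. Consequently your packing bound $m\le\bar m$ fails as well. Finally, contrary to your last paragraph, an energy bound \emph{is} available here (Lemma~\ref{Lem:EBnd}, via \eqref{FU-1a} and the Harnack-type inequality), and the paper uses it in an essential way: $\bar m$ is obtained by comparing the universal energy $C_6$ against the energy of a single bubble, and the fixed separation (ii) comes from carving out balls of fixed radius $\daste R_i\ge\daspr$ at each step, which is possible precisely because Proposition~\ref{prop:BbUp1} delivers (iv) on balls of fixed size.
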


\begin{rem}\label{Rem:Dist2B1}
If it holds further that $\sup_{B_1(0)} u > \frac{\Cfive^{1/2}}{\daspr^{\frac{n-2}{2}}}$, then
\[
\min_{1 \leq i \leq m} dist(x^i, B_1(0)) \leq \Big[\sup_{B_1(0)} u\Big]^{-\frac{2}{n-2}}.
\]
To see this, let $x_*$ be a point in $\bar B_1(0)$ such that $u(x_*) = \sup_{B_1(0)} u$. In view of (v) and the stated condition on $\sup_{B_1(0)} u$, $x_*$ belongs to some ball $B_{\daspr}(x^{i_0})$. By (iv), we then have
\[
u(x_*) \leq 2U^{x^{i_0},u(x^{i_0})}(x_*) \leq \frac{1}{|x_* - x^{i_0}|^{\frac{n-2}{2}}},
\]
which implies the assertion.
\end{rem}

Theorem \ref{theorem4X} can be stated equivalently as follows.
\begin{thm}\label{theorem4}
 Let $(f, \Gamma)$ satisfy \eqref{01weak}-\eqref{0304weak}, \eqref{Eq:ActaSIdiag}-\eqref{02weakY}, \eqref{FU-1a}
 and the normalization condition \eqref{F1}.
Assume that  $0 < u_k\in C^2(B_3(0))$ satisfy \eqref{eqB2} and \eqref{eqB1}. Let $\epsilon \in (0,1/2]$. Then there exist $\bar m = \bar m(f,\Gamma) \geq 1, \Cfive(f,\Gamma) > 1$ and $\daspr = \daspr(\epsilon,f,\Gamma) > 0$
such that, 
 after passing to a subsequence, still denoted by $u_k$, there exists $\{x_k^1, \cdots,  x_k^m\}\subset B_{2}(0)$ ($1 \leq m \leq \bar m$) satisfying\footnote{The constant $m$ is independent of $k$.}
\begin{enumerate}[(i)]
\item $u_k(x_k^1) \geq \sup_{B_1(0)} u_k$,
\item $|x_k^i-x_k^j|\ge \frac{1}{\Cfive}$ for all $k \geq 1$, $1 \leq i\ne j \leq m$, 
\item $\frac 1{\Cfive} \le
\frac {   u_k(x_k^i)  }{  u_k(x_k^j) }\le \Cfive$
 for all $k \geq 1$, $1 \leq i, j \leq m$,
 \item 
$|u_k(x) - U^{x_k^i, u_k(x_k^i)}(x)|
\le  \epsilon U^{x_k^i, u_k(x_k^i)}(x)$ for all $k \geq 1$, $1 \leq i \leq m$, $x\in B_{\daspr}(x_k^i)$,
\item $\frac {1}{\Cfive \delta_*^{n-2}u_k(x_k^1)}\le u_k(x) \le \frac {\Cfive} { \delta_*^{n-2} u_k(x_k^1)}$ for all $k \geq 1$, $x \in B_{\frac{3}{2}}(0)\setminus \cup_{ i=1}^m
B_{\daspr }(x_k^i)$,
\item $u_k(x_k^i) = \sup_{B_{\daspr}(x_k^i)} u_k$.
\end{enumerate}
\end{thm}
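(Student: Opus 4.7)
Since Theorems \ref{theorem4} and \ref{theorem4X} are equivalent, I work with the sequential form. The plan is an iterative bubble-extraction argument driven by the Liouville Theorem \ref{proposition1}. For the first bubble, let $x_k^1 \in \overline{B_1(0)}$ realize $\sup_{B_1(0)} u_k = u_k(x_k^1) =: M_k \to \infty$ (this gives (i); a later adjustment to a nearby local maximum will yield (vi)). Rescale by
\[
v_k(z) := M_k^{-1}\,u_k\bigl(x_k^1 + M_k^{-2/(n-2)} z\bigr),
\]
which by the conformal invariance \eqref{Eq:CIProp} satisfies $f(\lambda(A^{v_k})) = 1$ on balls of radius $R_k \to \infty$, with $v_k(0) = 1$ and $v_k$ locally uniformly bounded. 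Theorem \ref{proposition1} then forces $v_k \to U$ in $C^0_{loc}(\RR^n)$ along a subsequence --- the constant option is excluded because \eqref{FU-1a} together with \eqref{F1} yields hypothesis \eqref{Eq:ActaSIdiag}. This delivers the bubble approximation in (iv), but only on the shrinking Euclidean ball $B_{R\, M_k^{-2/(n-2)}}(x_k^1)$ for fixed $R$.

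\textbf{Iteration, counting, and comparability.} Given previously selected points $\{x_k^1, \ldots, x_k^j\}$ ordered with $u_k(x_k^1) \geq \cdots \geq u_k(x_k^j)$, terminate if the upper bound in (v) already holds on $B_{3/2}(0) \setminus \bigcup_{i \leq j} B_{\daspr}(x_k^i)$; otherwise pick $x_k^{j+1}$ realizing the supremum of $u_k$ on that residual region and rerun the first-bubble analysis around $x_k^{j+1}$. A universal separation $|x_k^{j+1} - x_k^i| \geq 1/\Cfive$ with $\Cfive$ independent of $\epsilon$ is obtained by running the first-bubble analysis at a fixed coarse tolerance (say $\epsilon = 1/2$): the bubble at $x_k^i$ dominates $u_k$ in a ball of universally positive radius, excluding other near-maxima from that ball. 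Since the pairwise-disjoint balls $B_{1/(2\Cfive)}(x_k^i) \subset B_3(0)$ have positive $n$-volume, $m \leq \bar m := C_n\,\Cfive^n$, and the iteration terminates. The selection ordering delivers (iii) from above, and non-termination at step $j$ gives (iii) from below via the same threshold; (v) is the termination criterion itself.

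\textbf{Main obstacle.} The main technical hurdle is upgrading the bubble approximation from the shrinking ball $B_{R M_k^{-2/(n-2)}}(x_k^1)$ delivered by the rescaling to a fixed-size ball $B_{\daspr}(x_k^1)$ with $\daspr = \daspr(\epsilon, f, \Gamma) > 0$. Arguing by contradiction: if no such $\daspr$ existed, there would be points $y_k$ at intermediate scale $M_k^{-2/(n-2)} \ll |y_k - x_k^1| \ll 1$ with $|u_k(y_k) - U^{x_k^1, M_k}(y_k)| > \epsilon\, U^{x_k^1, M_k}(y_k)$. Secondary rescaling at $y_k$ (using $u_k(y_k)$ or $U^{x_k^1, M_k}(y_k)$ as the scaling factor) and passage to the limit via Theorems \ref{proposition1} and \ref{proposition1-deg} yields either another bubble at positive rescaled separation from the first --- contradicting the selection rule via a maximum-principle comparison with $U^{x_k^1, M_k}$ based on the monotonicity of $f$ in each $\lambda_i$ from \eqref{0304weak} --- or a degenerate radial limit covered by Theorem \ref{proposition1-deg}, which forces the residual to be asymptotically radial/constant and hence incompatible with the prescribed $\epsilon$-deviation. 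The absence of variational structure, homogeneity or concavity on $(f, \Gamma)$ means every comparison in this upgrade must be engineered from the two Liouville theorems, the structural hypotheses \eqref{04}--\eqref{FU-1a}, and pointwise maximum-principle arguments, rather than from Pohozaev-type identities. Tracking the universality of $\daspr$, $\Cfive$, and $\bar m$ (in particular the $\epsilon$-independence of $\Cfive, \bar m$) throughout these compactness steps is the main bookkeeping challenge.
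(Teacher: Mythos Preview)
Your overall strategy---iteratively extract bubbles at successive maxima, invoke the Liouville theorems, and bound $m$ via separation---is correct and close to the paper's. The genuine gap is in your treatment of what you correctly flag as the main obstacle: upgrading (iv) from the shrinking ball $B_{RM_k^{-2/(n-2)}}(x_k^1)$ to a ball of fixed radius $\daspr$. Your contradiction argument presupposes that after secondary rescaling at the offending point $y_k$ one can pass to a limit, but this requires a priori bounds of the form $c\,U^{x_k^1,M_k}\le u_k\le C\,U^{x_k^1,M_k}$ on a \emph{fixed} ball about $x_k^1$; without the upper bound in particular, Theorem \ref{TheoremB} does not apply to the secondary rescaled sequence and no compactness is available. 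Neither bound follows from the selection rule, and you do not indicate how to obtain them. The paper's route is to exploit \eqref{FU-1a} in an essential way: the superharmonicity $-\Delta v_k\ge c\,v_k^{(n+2)/(n-2)}$ together with a Green-function estimate yields simultaneously the pointwise lower bound and an energy estimate $\int_{r_1\le|y|\le\delta_1 R_k}v_k^{(n+2)/(n-2)}\le\epsilon$ (Lemma \ref{lem-energy1}); a small-energy regularity lemma then converts the latter into the pointwise upper bound (Lemmas \ref{lem-smallenergy}--\ref{lem-upperbound}). This energy machinery is the missing ingredient in your sketch, and it is also what the paper uses in the outer iteration (Lemma \ref{Lem:EBnd}) to locate a barrier annulus on which $u$ is bounded---so that $x^1$ can be chosen as the maximum over a region \emph{containing} a definite ball about $x^1$; your choice $x_k^1\in\overline{B_1(0)}$ does not by itself guarantee that the rescaled $v_k$ is locally bounded, since $x_k^1$ may sit on $\partial B_1$.

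A second point: the paper's contradiction mechanism at the main obstacle is simpler and different from the one you describe. Rather than producing ``another bubble'' and comparing with $U^{x_k^1,M_k}$ via the maximum principle, the paper first proves, by moving spheres, a sphere-wise minimum estimate $\min_{|y|=r}v_k(y)\le(1+\epsilon)U(r)$ valid for \emph{all} $0<r<R_k/5$ (Lemma \ref{lem0.1}). At the offending radius $|y_k|$ the rescaled function therefore has minimum $\le 1+\epsilon$ and maximum $\ge 1+2\epsilon$ on the unit sphere; the secondary rescaled limit is then forced to be radially symmetric by Theorem \ref{proposition1-deg}, and this is the contradiction. No second bubble and no maximum-principle comparison with the explicit profile are involved.
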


\begin{rem}
By Remark \ref{Rem:Dist2B1}, we have
\[
dist(\{x_k^1, \ldots, x_k^m\},B_1(0)) \rightarrow 0 \text{ as } k \rightarrow \infty.
\]
\end{rem}

When $(f,\Gamma)=(\frac{1}{2n}\sigma_1, \Gamma_1)$, equation \eqref{eqB2} is $-\Delta u_k = n(n-2)\,u_k^{\frac{n+2}{n-2}}$ and Theorem \ref{theorem4} in this case
was proved by  Schoen \cite{SchoenNotes}. 

See Li \cite{Li95-JDE} and Chen and Lin \cite{ChenLin} for analogous results for the equation $-\Delta u_k = K(x)u_k^{\frac{n+2}{n-2}}$.

In Theorems \ref{theorem4X} and \ref{theorem4}, $B_1(0)$, $B_2(0)$ and $B_3(0)$ can be replaced respectively by $B_{r_1}(0)$, $B_{r_2}(0)$ and $B_{r_3}(0)$, $0 < r_1 < r_2 < r_3$, and in this case the constants $\bar m$, $\Cfive$, $\daspr$ and $\Caspr$ depend also on $r_1$, $r_2$ and $r_3$.

The following is a quantitative version of Theorem \ref{TheoremA}, and is related to Theorem \ref{proposition1} and Theorem \ref{theorem4X}.

\begin{thm}[Quantitative Liouville Theorem]
Let $(f, \Gamma)$ satisfy \eqref{01weak}-\eqref{0304weak}, \eqref{Eq:ActaSIdiag}-\eqref{02weakY}, \eqref{FU-1a} and the normalization
condition \eqref{F1},
and let $\gamma, r_1 > 0$ be constants.
  Then, for every $\epsilon \in (0,1/2]$,
there exist some constants $\daspr > 0, R^* > 0$,  depending only on
$(f, \Gamma)$, $\gamma, r_1$ and $\epsilon$,
 such that if $0< v\in C^2(B_R(0))$ for some $R\ge R^*$,
\begin{equation}
f(\lambda(A^v)=1\quad 
 \mbox{in}\ B_R(0),
\label{AppB1}
\end{equation}
and
\begin{equation}
v\ge \gamma \quad \mbox{in}\ \ B_{r_1}(0),
\label{AppB2}
\end{equation}
then, for some $\bar x\in \RR^n$ satisfying
\begin{equation}
v(\bar x) = \max_{B_{\daspr R}(0)} v \leq \frac{2^{n-1}}{\gamma\,r_1^{n-2}}, \qquad |\bar x| \leq 2^{\frac{1}{n-2}} \gamma^{-\frac{2}{n-2}},
\label{AppB3}
\end{equation}
there holds
\begin{equation}
|v(y)- U^{\bar x, v(\bar x)}(y)|
\le \epsilon\,U^{\bar x, v(\bar x)}(y),\qquad \forall\ |y - \bar x|\le \daspr R.
\label{AppB4}
\end{equation}
\label{quantitativeliouville}
\end{thm}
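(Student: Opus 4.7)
The proof is by contradiction and compactness, combining Theorem \ref{proposition1} (Liouville) with the blow-up analysis of Theorem \ref{theorem4X}. Suppose the conclusion fails for some $\epsilon_0 \in (0, 1/2]$: there exist $R_k \to \infty$ and $v_k \in C^2(B_{R_k}(0))$ satisfying \eqref{AppB1} and \eqref{AppB2} for which no $\bar x_k$ simultaneously realizes \eqref{AppB3} and \eqref{AppB4} with a fixed $\daspr$. The plan is to extract a subsequential limit $v$ of $\{v_k\}$ on $\RR^n$, identify it as a standard bubble via Theorem \ref{proposition1}, read off the bounds \eqref{AppB3} from $v \geq \gamma$, and then promote the local convergence to the large-ball multiplicative closeness \eqref{AppB4} using conformal invariance.

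First I would establish local uniform boundedness of $\{v_k\}$ on $\RR^n$. Otherwise $\sup_{B_\rho(0)} v_k \to \infty$ for some $\rho > 0$, and Theorem \ref{theorem4X} (on a suitably large ball, by the scaling remark following Theorem \ref{theorem4}) produces blow-up centers $x_k^1, \ldots, x_k^{m_k}$ with $m_k \leq \bar m$, mutually comparable heights $v_k(x_k^i) \to \infty$, and the estimates of properties (iv) and (v). Combined with the explicit formula for $U^{x_k^i, \mu}$, these force $v_k \to 0$ on any compact set bounded away from the subsequential limits of the centers, contradicting $v_k \geq \gamma$ on $\overline{B_{r_1}}(0)$. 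Hence $\{v_k\}$ is locally uniformly bounded, and local $C^2$ regularity for equations $f(\lambda(A^u)) = 1$ (from the authors' earlier work) extracts a subsequence $v_k \to v$ in $C^2_{loc}(\RR^n)$ with $v \in C^2(\RR^n)$ solving the equation on $\RR^n$ and $v \geq \gamma$ on $\overline{B_{r_1}}(0)$.

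By Theorem \ref{proposition1} together with \eqref{Eq:ActaSIdiag} (which rules out positive constant solutions), $v = U^{\bar x^*, \mu^*}$ for some $\bar x^* \in \RR^n$ and $\mu^* > 0$. The identity $U^{\bar x^*, \mu^*}(y)^{-2/(n-2)} = (\mu^*)^{-2/(n-2)} + (\mu^*)^{2/(n-2)}|y - \bar x^*|^2$ evaluated at $y = 0$ and at the point of $\overline{B_{r_1}}(0)$ farthest from $\bar x^*$, both of which must be $\leq \gamma^{-2/(n-2)}$, yields $\mu^* \geq \gamma$, $|\bar x^*| \leq \gamma^{-2/(n-2)}$, and $\mu^* \leq \gamma^{-1}r_1^{-(n-2)}$; the slack factors $2^{n-1}$ and $2^{1/(n-2)}$ in \eqref{AppB3} absorb the $\epsilon_0$-discrepancy between $v_k$ and $U^{\bar x_k, v_k(\bar x_k)}$. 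Taking $\bar x_k$ to be a maximizer of $v_k$ on $\overline{B_{\daspr R_k}}(0)$, $C^2_{loc}$ convergence gives $\bar x_k \to \bar x^*$, $v_k(\bar x_k) \to \mu^*$, and \eqref{AppB4} on any fixed ball $B_L(\bar x^*)$ for $k$ large.

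The main obstacle is extending \eqref{AppB4} from a fixed ball to the full ball $B_{\daspr R_k}(\bar x_k)$. On the outer region $L \leq |y - \bar x_k| \leq \daspr R_k$, both $v_k(y)$ and $U^{\bar x_k, v_k(\bar x_k)}(y)$ decay like $|y - \bar x_k|^{-(n-2)}$, so uniform smallness is automatic but multiplicative closeness is not. I would invoke the conformal invariance \eqref{Eq:CIProp} via Kelvin inversion at $\bar x_k$: set $\hat v_k(z) = |z|^{-(n-2)} v_k(\bar x_k + z/|z|^2)$. The inverted $\hat v_k$ again solves $f(\lambda(A^{\hat v_k})) = 1$, the outer region of $v_k$ maps to a neighborhood of the origin for $\hat v_k$, and the Kelvin image of $v = U^{\bar x^*, \mu^*}$ is the bubble $U^{0, 1/\mu^*}$. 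The bound $v_k \geq \gamma$ on $B_{r_1}(0)$ transfers to a quantitative lower bound $\hat v_k(z) \geq \gamma |z|^{-(n-2)}$ on the Kelvin image of $B_{r_1}(0)$, which allows the compactness + Liouville argument above to be repeated for $\hat v_k$ near the origin. This yields $C^2_{loc}$ convergence of $\hat v_k$ to $U^{0, 1/\mu^*}$, and pulling back gives the multiplicative closeness of $v_k$ to $U^{\bar x_k, v_k(\bar x_k)}$ on the outer region. Choosing $L$ large and $\daspr$ small depending only on $f, \Gamma, \gamma, r_1, \epsilon_0$, the inner and outer estimates together yield \eqref{AppB4}, contradicting the assumed failure.
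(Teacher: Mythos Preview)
Your contradiction setup matches the paper's, but the paper takes a much shorter route that sidesteps the outer-region difficulty entirely. Instead of working at the original scale, the paper rescales immediately: set $u_k(y)=R_k^{(n-2)/2}v_k(R_ky)$ on $B_3(0)$. Then $f(\lambda(A^{u_k}))=1$ and $u_k\ge R_k^{(n-2)/2}\gamma\to\infty$ on $B_{r_1/R_k}(0)$, so Theorem~\ref{theorem4} applies directly. Since $u_k\to 0$ away from the limit centers $x_*^i$ while $u_k\to\infty$ at the origin, one center $x_*^{i_0}$ must be $0$; assertion (iv) then gives $|u_k-U^{x_k^{i_0},u_k(x_k^{i_0})}|\le\epsilon\,U^{x_k^{i_0},u_k(x_k^{i_0})}$ on the \emph{fixed} ball $B_{\daspr}(x_k^{i_0})$, and undoing the rescaling turns this into \eqref{AppB4} on $B_{\daspr R_k}(\bar x_k)$ with $\bar x_k=R_k x_k^{i_0}$. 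The bounds \eqref{AppB3} then follow by evaluating the bubble at $0$ and at a point of $B_{r_1}(0)$ at distance $\ge r_1/2$ from $\bar x_k$. No separate outer-region argument is needed: the scale $\daspr R_k$ is built into the rescaling.

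Your Kelvin-inversion step, by contrast, has a genuine gap. After inverting at $\bar x_k$, the function $\hat v_k$ is defined only on $\{|z|\gtrsim 1/R_k\}$, i.e.\ on a punctured neighborhood of the origin, so the ``compactness~$+$~Liouville'' argument cannot be repeated there (Theorem~\ref{proposition1} requires entire solutions). Moreover, the lower bound you transfer, $\hat v_k(z)\ge\gamma|z|^{-(n-2)}$, holds on the Kelvin image of $B_{r_1}(0)$, which is the \emph{exterior} of a fixed ball, not a neighborhood of $0$; it gives no control where you need it. A related gap appears earlier: from $C^2_{\mathrm{loc}}$ convergence alone you cannot conclude that a maximizer of $v_k$ over the \emph{growing} ball $\overline{B_{\daspr R_k}}(0)$ stays bounded and converges to $\bar x^*$---that requires exactly the kind of decay information on $v_k$ at scale $R_k$ that you have not established. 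Both issues dissolve once you rescale first as the paper does, because Theorem~\ref{theorem4} already contains the needed control at that scale.
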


\begin{rem}
The constant $\daspr$ in Theorems \ref{theorem4X} and \ref{quantitativeliouville} can be chosen the same.
\end{rem}

\begin{rem}
An analogous result for the degenerate elliptic equation $\lambda(A^v) \in \partial \Gamma$ is a consequence of the local gradient estimate \cite[Theorem 1.5]{LiNgBocher}.
\end{rem}

An ingredient in our proof of Theorems \ref{theorem4X} and \ref{theorem4} is the following local gradient estimate, which follows from Theorem \ref{proposition1-deg} and the proof of \cite[Theorem 1.10]{Li09-CPAM}.

\begin{thm}\label{TheoremB}
  Let $(f, \Gamma)$ satisfy \eqref{01weak}-\eqref{0304weak} 
 and let $v\in C^2(B_2(0))$
satisfy, for some constant $b>0$,
\begin{equation}
0<v\le b \qquad \mbox{in}\ B_2(0),
\label{36new}
\end{equation}
and
\begin{equation}
f(\lambda(A^v))=1  \qquad \mbox{in}\ B_2(0).
\label{35new}
\end{equation}
Then, for some constant $C$ depending only on $(f, \Gamma)$  and $b$,
\begin{equation}
|\nabla \ln v|\le C\quad \mbox{in}\ B_1(0).
\label{abc}
\end{equation}
\end{thm}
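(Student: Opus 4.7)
The plan is to argue by contradiction via a conformal blow-up procedure, applying Theorem \ref{proposition1-deg} to the resulting limit. Suppose the estimate fails: there exists a sequence $v_k \in C^2(B_2(0))$ satisfying $0 < v_k \le b$ and $f(\lambda(A^{v_k})) = 1$ on $B_2(0)$, with $\sup_{B_1(0)} |\nabla \ln v_k| \to \infty$. A Hofer-type doubling applied to the continuous function $F_k(x) := (2 - |x|)\, |\nabla \ln v_k(x)|$ on $\bar B_2(0)$ produces a maximizer $x_k \in B_2(0)$. Writing $L_k := |\nabla \ln v_k(x_k)|$ and $d_k := 2 - |x_k|$, one obtains $L_k \to \infty$, $R_k := d_k L_k / 2 \to \infty$, and $|\nabla \ln v_k| \le 2 L_k$ throughout $B_{d_k/2}(x_k) \subset B_2(0)$.

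Next, perform the conformal rescaling
\[
\tilde v_k(z) := L_k^{-(n-2)/2}\, v_k(x_k + z/L_k), \qquad z \in B_{R_k}(0).
\]
By the conformal invariance \eqref{Eq:CIProp}, $\tilde v_k \in C^2(B_{R_k}(0))$ again solves $f(\lambda(A^{\tilde v_k})) = 1$. A direct computation gives $\nabla \ln \tilde v_k(z) = L_k^{-1}\, \nabla \ln v_k(x_k + z/L_k)$, so $|\nabla \ln \tilde v_k| \le 2$ on $B_{R_k}(0)$ while $|\nabla \ln \tilde v_k(0)| = 1$. Since $v_k \le b$ and $L_k \to \infty$, $\tilde v_k(0) \le b\, L_k^{-(n-2)/2} \to 0$. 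Setting $M_k := 1 / \tilde v_k(0) \to \infty$ and $w_k := M_k \tilde v_k$, we have $w_k(0) = 1$, $|\nabla w_k(0)| = 1$, and the Lipschitz bound on $\log w_k$ gives $e^{-2|z|} \le w_k(z) \le e^{2|z|}$ on every $B_R(0) \subset B_{R_k}(0)$.

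The equibounded, equi-Lipschitz sequence $\{w_k\}$ admits, via Arzel\`a--Ascoli, a subsequence converging to some $w_\infty \in C^0_{loc}(\RR^n)$ with $w_\infty(0) = 1$. Since $\tilde v_k$ solves $f(\lambda(A^{\tilde v_k})) = 1$ on the full balls $B_{R_k}(0)$ with $R_k \to \infty$ and $M_k \to \infty$, the ``in particular'' clause of Theorem \ref{proposition1-deg} applies, forcing $w_\infty$ to be constant, so $w_\infty \equiv 1$.

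The remaining step, and the main obstacle, is to convert the $C^0_{loc}$ convergence $w_k \to 1$ into $C^1_{loc}$ convergence; once this is in hand, $\nabla w_k(0) \to \nabla w_\infty(0) = 0$ contradicts $|\nabla w_k(0)| = 1$. The regularity upgrade cannot be deduced from $C^0$ convergence alone (the toy sequence $1 + k^{-1}\sin(kx)$ converges to a constant in $C^0$ but its gradient at the origin is $1$), so it must come from interior regularity for the equation itself: under \eqref{01weak}--\eqref{0304weak}, the uniform $L^\infty$ and gradient bounds on $w_k$ together with the positive-definiteness of $\partial f/\partial \lambda_i$ yield interior $C^{1,\alpha}$ estimates, as exploited in the proof of \cite[Theorem 1.10]{Li09-CPAM}, completing the contradiction.
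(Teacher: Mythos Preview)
Your contradiction scheme is close in spirit to the paper's, but the final step --- upgrading $C^0_{loc}$ convergence of $w_k$ to $C^1_{loc}$ via ``interior $C^{1,\alpha}$ estimates'' --- is not available under the stated hypotheses, and this is not what \cite{Li09-CPAM} does. The hypotheses \eqref{01weak}--\eqref{0304weak} impose no concavity, no homogeneity, and no uniform ellipticity on $f$; the paper emphasises this. In your rescaling, $\tilde v_k = M_k^{-1} w_k$ with $M_k \to \infty$, so $A^{\tilde v_k} = M_k^{4/(n-2)} A^{w_k}$ and the equation for $w_k$ reads $f\bigl(M_k^{4/(n-2)} \lambda(A^{w_k})\bigr) = 1$: a \emph{degenerating} family of equations, for which uniform $C^{1,\alpha}$ estimates cannot be expected in this generality. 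The only regularity tool the paper has at this stage is Theorem~\ref{SoftGEst}, which turns an oscillation bound into a \emph{Lipschitz} bound --- nothing stronger. From a uniform Lipschitz bound you get $C^{0,\alpha'}$ convergence for every $\alpha' < 1$, but not $C^1$ convergence, and so the normalisation $|\nabla w_k(0)| = 1$ survives the limit unharmed. Your toy example $1 + k^{-1}\sin(kx)$ is exactly the obstruction, and there is no PDE input here strong enough to rule it out.

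The paper (following \cite{Li09-CPAM}) resolves this by blowing up a \emph{H\"older} seminorm rather than the gradient. One fixes $0 < \alpha < 1$, supposes $[\ln v_k]_{C^\alpha(B_{1/2})} \to \infty$, and performs the doubling argument on the quantity $\delta(\ln v_k, x, \alpha)^{-1}$, a H\"older analogue of $|\nabla \ln v_k(x)|$. After rescaling by $\epsilon_k := \delta(\ln v_k, x_k, \alpha)$, the functions $\hat v_k(y) = v_k(x_k)^{-1} v_k(x_k + \epsilon_k y)$ satisfy $[\ln \hat v_k]_{\alpha,1}(0) = 1$ and are locally uniformly bounded above and below; Theorem~\ref{SoftGEst} then gives uniform \emph{Lipschitz} bounds. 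Now the sequence converges in $C^{0,\alpha'}_{loc}$ for some $\alpha' \in (\alpha,1)$, and Theorem~\ref{proposition1-deg} forces the limit to be constant. Since $\alpha' > \alpha$, this $C^{0,\alpha'}$ convergence to a constant \emph{does} kill the normalisation $[\ln \hat v_k]_{\alpha,1}(0) = 1$, yielding the contradiction. Once the $C^\alpha$ estimate on $\ln v$ is in hand, a second application of Theorem~\ref{SoftGEst} gives the desired gradient bound. The key idea you are missing is this deliberate drop from $C^1$ to $C^\alpha$ in the blow-up quantity, precisely so that the available Lipschitz compactness suffices to close the argument.
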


For $(f,\Gamma)=(\sigma_k^{1/k}, \Gamma_k)$, the result was proved by Guan and Wang \cite{GW03-IMRN}.

When $(f,\Gamma)$ satisfies \eqref{01}-\eqref{04} and is homogeneous of positive degree, Theorem \ref{TheoremB} was proved in \cite{Li09-CPAM}. 

The rest of the paper is organized as follows. We start in Section \ref{sec:Tp1} with the proof of Theorems \ref{proposition1} and \ref{proposition1-deg}. We then prove Theorem \ref{TheoremB} in Section \ref{sec:ThmB}. In Section \ref{sec:T4X}, we first establish an intermediate quantitative Liouville result and then use it to prove Theorem \ref{theorem4X}. In Section \ref{sec:tQL}, we prove Theorem \ref{quantitativeliouville} as an application of Theorem \ref{theorem4X}. In Appendix \ref{Sec:AppA}, we present a lemma about super-harmonic functions which is used in the body of the paper. In Appendix \ref{Sec:AppVS}, we include a relevant remark on the limit of viscosity solutions of elliptic PDE. Finally we collect in Appendix \ref{Sec:AppC} some relevant calculus lemmas.

\section{Non-quantitative Liouville theorems}\label{sec:Tp1}

In this section, we prove Theorems \ref{proposition1} and \ref{proposition1-deg}. We use the method of moving spheres and establish along the way, as a tool, a gradient estimate which is in a sense weaker than that in Theorem \ref{TheoremB} but suffices for the moment. (Note that the proof of Theorem \ref{TheoremB} relies on Theorem \ref{proposition1-deg}.)

\subsection{A gradient estimate}

\begin{thm}\label{SoftGEst}
Let $(f,\Gamma)$ satisfy \eqref{01weak}, \eqref{0304weak} and 
\[
\Gamma \cap \{\lambda + t\mu:  t > 0\} \text{ is convex for all $\lambda \in \Gamma, \mu \in \Gamma_n$}.
\]
Let $0 < v\in C^2(B_2(0))$
satisfy, for some constant $\theta > 1$,
\begin{equation}
\sup_{B_2(0)} v \leq \theta \inf_{B_2(0)} v,
\label{Eq:osclnv}
\end{equation}
and
\[
f(\lambda(A^v))=1  \qquad \mbox{in}\ B_2(0).
\]
Then, for some constant $C$ depending only on $n$  and $\theta$,
\[
|\nabla \ln v|\le C\quad \mbox{in}\ B_1(0).
\]
\end{thm}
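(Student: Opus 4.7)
The plan is a contradiction-and-blow-up argument reducing to the Liouville-type Theorems \ref{TheoremA} and \ref{TheoremADeg}. I would assume no such constant $C(n,\theta)$ exists, producing a sequence $\{v_k\}\subset C^2(B_2(0))$ satisfying all hypotheses with $M_k := \sup_{\bar B_1(0)}|\nabla\ln v_k|\to \infty$. A standard point-selection picks $x_k\in\bar B_{3/2}(0)$ maximizing $(3/2-|x|)|\nabla\ln v_k(x)|$ over $\bar B_{3/2}(0)$; putting $\rho_k := |\nabla\ln v_k(x_k)|^{-1}\to 0$ yields the doubling-type bound $|\nabla\ln v_k|\le 2\rho_k^{-1}$ on $B_{(3/2-|x_k|)/2}(x_k)$, with $R_k := (3/2-|x_k|)/(2\rho_k)\to \infty$.

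Next, by the conformal invariance \eqref{Eq:CIProp}, I would define $w_k(y) := \rho_k^{(n-2)/2}\,v_k(x_k+\rho_ky)$, a classical solution of $f(\lambda(A^{w_k}))=1$ on $B_{R_k}(0)$ with $|\nabla\ln w_k(0)|=1$, $|\nabla\ln w_k|\le 2$, and (by the oscillation assumption) $w_k(y)/w_k(0)\in[\theta^{-1},\theta]$ on a similarly growing ball. Passing to a subsequence so that $w_k(0)\to\alpha\in[0,\infty]$: when $\alpha\in(0,\infty)$, the $w_k$ are locally uniformly bounded away from $0$ and $\infty$, so the equation is locally uniformly elliptic on this range, and interior $C^{2,\alpha}$ regularity combined with the Lipschitz bound yields a $C^2_{loc}(\RR^n)$ limit solving $f(\lambda(A^w))=1$ on $\RR^n$. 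By Theorem \ref{TheoremA}, $w$ is of the bubble form \eqref{vform}, which contradicts $w\ge\alpha/\theta>0$ since bubbles vanish at infinity.

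In the degenerate case $\alpha\in\{0,\infty\}$, I would work with $\hat w_k:= w_k/w_k(0)\in[\theta^{-1},\theta]$, which is uniformly Lipschitz with $\hat w_k(0)=1$ and $|\nabla\ln\hat w_k(0)|=1$, and satisfies the rescaled equation $f(w_k(0)^{-4/(n-2)}\lambda(A^{\hat w_k}))=1$ whose scaling factor diverges. Viscosity stability (Appendix \ref{Sec:AppVS}) together with the cone structure of $\Gamma$ and the monotonicity of $f$ should identify the $C^0_{loc}$-limit $\hat w$ as a viscosity solution of the degenerate equation $\lambda(A^v)\in\partial\Gamma$ on $\RR^n\setminus\{x_0\}$ for every $x_0\in\RR^n$; applying Theorem \ref{TheoremADeg} with each such center forces $\hat w$ to be radially symmetric about every point, hence constant, so $\hat w\equiv 1$. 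Upgrading the convergence to $C^1_{loc}$ via interior regularity then contradicts $|\nabla\ln\hat w_k(0)|=1$.

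The hard part will be the degenerate case, which requires (i) correctly identifying the limiting viscosity equation as the scaling factor tends to $0$ or $\infty$, using only the cone structure of $\Gamma$ and the monotonicity of $f$ (rather than any quantitative behaviour of $f$ at the origin or at infinity), and (ii) upgrading $C^0_{loc}$ convergence to at least $C^1_{loc}$ in order to transfer the nontrivial normalization $|\nabla\ln\hat w_k(0)|=1$ through to the constant limit.
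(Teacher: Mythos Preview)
Your blow-up strategy has genuine gaps under the stated hypotheses. First, the Liouville results you invoke, Theorems~\ref{TheoremA} and~\ref{TheoremADeg}, both require $\Gamma$ to be an open convex symmetric \emph{cone} with $\Gamma_n\subset\Gamma\subset\Gamma_1$ (and Theorem~\ref{TheoremA} also needs $f\in C^0(\bar\Gamma)$ with $f=0$ on $\partial\Gamma$); none of this is assumed in Theorem~\ref{SoftGEst}, which works under the much weaker \eqref{01weak}, \eqref{0304weak} and the half-line convexity condition. Second, even granting stronger hypotheses on $(f,\Gamma)$, your non-degenerate case relies on ``interior $C^{2,\alpha}$ regularity'' to produce a $C^2_{loc}$ limit --- this is Evans--Krylov, which requires concavity of $f$, explicitly \emph{not} assumed in this paper. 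Without it you have at best a $C^0$ (or $C^{1,\alpha}$) viscosity limit, to which Theorem~\ref{TheoremA} does not apply. Third, in the degenerate case you want the rescaled limit $\hat w$ to satisfy $\lambda(A^{\hat w})\in\partial\Gamma$ in the viscosity sense, but since $\Gamma$ need not be a cone and $f$ need not be homogeneous, there is no mechanism by which the equations $f\big(w_k(0)^{-4/(n-2)}\lambda(A^{\hat w_k})\big)=1$ converge to that degenerate equation; the paper itself remarks (immediately after Theorem~\ref{proposition1-deg}) that without homogeneity the limit need not be a viscosity solution of \eqref{eqB0deg}. The final $C^1$ upgrade you need to pass $|\nabla\ln\hat w_k(0)|=1$ to the limit runs into the same regularity wall.

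The paper's proof avoids all of this with a \emph{direct} moving-spheres argument on the single function $v$, with no blow-up, no limits, and no Liouville input. For each $x\in B_{4/3}(0)$ one starts the sphere (using only $v\in C^2$) and slides it inside $B_{5/3}(0)$ until it stops at radius $\bar\lambda(x)$; by conformal invariance, the strong maximum principle and the Hopf lemma, either $\bar\lambda(x)=5/3-|x|$ or there is a touching point $y\in\partial B_{5/3}(0)$ with $v_{x,\bar\lambda(x)}(y)=v(y)$. In the latter case the oscillation bound \eqref{Eq:osclnv} immediately gives $\theta\ge (5/3-|x|)^{n-2}/\bar\lambda(x)^{n-2}$. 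Either way $\bar\lambda(x)\ge c(n,\theta)>0$ uniformly in $x\in B_{4/3}(0)$, and a calculus lemma (\cite[Lemma~A.2]{LiLi05}) converts this uniform lower bound on the moving-sphere radius into the claimed bound on $|\nabla\ln v|$. Only ellipticity and conformal invariance are used, so the argument works under the weak hypotheses as stated --- and this is essential, since Theorem~\ref{SoftGEst} is the tool used later to prove Theorems~\ref{proposition1} and~\ref{proposition1-deg}.
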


This type of gradient estimate was established and used in various work of the first named author and his collaborators under less general hypothesis on $(f,\Gamma)$. It turns out that the same proof works in the current situation. We give a detailed sketch here for completeness.

We use the method of moving spheres as in \cite{LiLi03, LiLi05, LiZhang03, LiZhu95-Duke}. For a function $w$ defined on a subset of $\RR^n$, we define
\[
w_{x,\lambda}(y) = \frac{\lambda^{n-2}}{|y - x|^{n-2}}w\Big(x + \frac{\lambda^2(y -x )}{|y -x|^2}\Big)
\]
wherever the expression makes sense. We will use $w_\lambda$ to denote $w_{0,\lambda}$. We start with a simple result.

\begin{lem} \label{lem-1}
Let $R > 0$ and $w$ be a positive Lipschitz function in $\bar B_R(0)$ such that, for some $L > 0$,
\[
|\ln w(y) - \ln w(z)| \leq L|y - z| \text{ for all } y, z \in \bar B_R(0). 
\]
Then for $\underline{\lambda} = \min(\frac{n-2}{2L},\frac{R}{2})$ we have 
\begin{equation}
w_\lambda \le  w\
\mbox{in}\ B_{\underline{\lambda}}(0)\setminus B_\lambda, 
\forall 0<\lambda<
\underline{\lambda}.
\label{aa-1}
\end{equation}
\end{lem}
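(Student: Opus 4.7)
My plan is to reduce the inequality $w_\lambda(y) \leq w(y)$ on the annulus $B_{\underline{\lambda}}(0)\setminus B_\lambda(0)$ to an elementary one-variable calculus inequality, and then close the argument using the Lipschitz bound on $\ln w$ along the chord from $y$ to its image under the inversion.

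First, I would fix $y$ with $\lambda < |y| < \underline{\lambda}$ and write $y^{*} := \lambda^{2}y/|y|^{2}$, so that $y^{*}$ lies on the ray through $y$ with $|y^{*}| = \lambda^{2}/|y| < \lambda < |y|$. Both $y$ and $y^{*}$ lie in $\bar B_{R}(0)$ since $|y|<\underline{\lambda}\le R/2$, and the same is true of the segment joining them. Taking logarithms in the definition of $w_\lambda$, the desired inequality is equivalent to
\[
\ln w(y) - \ln w(y^{*}) \;\geq\; -(n-2)\ln\frac{|y|}{\lambda}.
\]
The Lipschitz hypothesis on $\ln w$ applied to the collinear pair $y, y^{*}$ gives
\[
\ln w(y) - \ln w(y^{*}) \;\geq\; -L\,|y-y^{*}| \;=\; -L\,\frac{|y|^{2}-\lambda^{2}}{|y|}.
\]

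Next, I would use the elementary estimate $\ln t \geq 1 - 1/t$ for $t \geq 1$, with $t = |y|/\lambda$, to obtain
\[
(n-2)\ln\frac{|y|}{\lambda} \;\geq\; (n-2)\,\frac{|y|-\lambda}{|y|}.
\]
So it is enough to prove
\[
L\,\frac{|y|^{2}-\lambda^{2}}{|y|} \;\leq\; (n-2)\,\frac{|y|-\lambda}{|y|},
\]
and after dividing by the positive quantity $(|y|-\lambda)/|y|$ this becomes $L(|y|+\lambda) \leq n-2$. Since $\lambda < |y| < \underline{\lambda} \leq (n-2)/(2L)$, we have $|y|+\lambda < 2\underline{\lambda} \leq (n-2)/L$, which finishes the argument.

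The plan is essentially routine and I do not anticipate a genuine obstacle. The only substantive observation is that the threshold $\underline{\lambda}=(n-2)/(2L)$ is exactly what the linearization of both sides at $|y|=\lambda$ demands: the Lipschitz bound contributes a factor $2\lambda$ (coming from $(|y|^{2}-\lambda^{2})/|y|$ at $|y|=\lambda$) on the one side, and the conformal weight $n-2$ on the other. The one-sided inequality $\ln t \geq 1-1/t$ is precisely what is needed to upgrade this infinitesimal comparison to the whole annulus, once the constraint $\underline{\lambda}\leq R/2$ ensures that $y^{*}$ stays in the domain of $w$.
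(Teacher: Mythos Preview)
Your proof is correct and follows essentially the same route as the paper's. The paper first recasts \eqref{aa-1} as the monotonicity of $r^{(n-2)/2}w(r,\theta)$ on $(0,\underline{\lambda})$ and then verifies this via $\ln w(s,\theta)-\ln w(r,\theta)\ge -L(s-r)\ge -\frac{n-2}{2\underline{\lambda}}(s-r)\ge -\frac{n-2}{2}(\ln s-\ln r)$, using the mean value theorem in the last step; you instead work directly with the pair $(y,y^{*})$ and use $\ln t\ge 1-1/t$, arriving at the equivalent condition $L(|y|+\lambda)\le n-2$. The two arguments differ only in cosmetic algebra.
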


\begin{proof}
Write  $w$
in polar coordinates $w(r, \theta)$.
It is easy to see that \eqref{aa-1}
is equivalent to
\begin{equation}
r^{\frac {n-2}2} w(r, \theta))\le \ 
s^{\frac {n-2}2} w(s, \theta),
\ \ \forall\ 0<r<s<\underline{\lambda},\ \forall\ \theta.
\label{aa-2}
\end{equation}
Estimate \eqref{aa-2} is readily seen from the estimates
\begin{align*}
\ln w(s, \theta) -\ln  w(r, \theta)
	&\geq  - L|s - r|
		\geq - \frac{n-2}{2\underline{\lambda}} (s - r)\\
	&\geq - \frac{n-2}{2}[\ln s - \ln r].
\end{align*}
Lemma \ref{lem-1} is established.
\end{proof}

\begin{proof}[Proof of Theorem \ref{SoftGEst}]
By Lemma \ref{lem-1}, there exists some $r_0 \in (0,1/3)$ such that
\[
v_{x,\lambda} \leq v \text{ in } B_{r_0}(x) \setminus B_{\lambda}(x) \text{ for all } \lambda \in (0,r_0) \text{ and } x \in B_{4/3}(0).
\]
It is easy to see that, for some $r_1 \in (0,r_0)$, 
\[
v_{x,\lambda} \leq v \text{ in } B_{5/3}(0) \setminus B_{r_0}(x) \text{ for all } \lambda \in (0,r_1) \text{ and } x \in B_{4/3}(0).
\]
We then define, for $x \in B_{4/3}(0)$,
\[
\bar\lambda(x) = \sup\Big\{\lambda \in (0, 5/3 - |x|): v_{x,\lambda} \leq v \text{ in } B_{5/3}(0) \setminus B_{\lambda}(x)\Big\}.
\]

We have
\[
v_{x,\bar \lambda(x)} \le v\
\mbox{in}\ B_{5/3}(0)\setminus B_{\bar \lambda(x)}(x), 
\]
By the conformal invariance \eqref{Eq:CIProp}, 
$v_{x,\bar \lambda(x)}$ satisfies
\[
f(\lambda(A^{v_{x,\bar \lambda(x)}}))=1,
\quad \ \ \mbox{in}\ B_{5/3}(0)\setminus \overline{  B_{\bar \lambda(x)}(x) }.
\]
Using the above two displayed equations, the definition
of $\bar \lambda(x)$,
and using the ellipticity of the
equation satisfied by $v$ and $v_{x,\bar \lambda(x)}$,
we can apply 
the strong maximum principle and Hopf Lemma to infer that either $\bar\lambda(x) = 5/3 - |x|$ or there exists 
some $y \in \partial B_{5/3}(0)$ such that
$$
v_{x,\bar \lambda(x)}(y)=v(y)
$$
--- see the proof of \cite[Lemma 4.5]{LiLi05}.

In the latter case, \eqref{Eq:osclnv} implies that
\[
\theta \geq \frac{v\big(x + \frac{\bar\lambda(x)^2(y-x)}{|y - x|^2}\big)}{v(y)} = \frac{|y - x|^{n-2}}{\bar\lambda(x)^{n-2}} \geq \frac{(5/3 - |x|)^{n-2}}{\bar\lambda(x)^{n-2}}.
\]
In either case, we obtain that
\[
\bar \lambda(x) \geq c(n,\theta) > 0 \text{ for all } x \in B_{4/3}(0).
\]
The conclusion then follows from \cite[Lemma A.2]{LiLi05}.
\end{proof}

\subsection{Proof of Theorem \ref{proposition1}}

\begin{rem} \label{Rem:ASD}
If we drop condition \eqref{Eq:ActaSIdiag}, the case that $v$ is constant in Theorem \ref{proposition1} can occur. For example, consider $n \geq 3$ and 
\begin{align*}
f(\lambda) &= \sigma_2(\lambda) + 1, \qquad \Gamma = \Big\{\lambda \in \RR^n\Big| \sum_{j=1}^n \lambda_j - \lambda_i > 0  \text{ for all } i = 1, \ldots, n\Big\},\\
v_k(x) &= \Big(\frac{1}{R_k^{n+4}}|x - x_k|^{-\frac{n-4}{2}} + 1\Big)^{\frac{2(n-2)}{n-4}} \text{ for } |x| < \frac{|x_k|}{2} = R_k \rightarrow \infty.
\end{align*}
It is readily seen that $f(t, \ldots, t) > 1$ for all $(t, \ldots, t) \in \Gamma$, $v_k$ satisfies \eqref{ve} (cf. \cite[Theorem 1.6]{LiNgBocher}) and $v_k \rightarrow 1$ in $C^{0}_{loc}(\RR^n)$.
\end{rem}

\medskip
\begin{proof}[Proof of Theorem \ref{proposition1}]
We may assume that $R_k\ge 5$ for all $k$.

Clearly, for every $\beta>1$, there exists some positive constant 
$C(\beta)$, independent of $k$, such that 
$1/C(\beta)\le 
v_k\le C(\beta)$ in $B_\beta(0)$. 
 It follows from Theorem \ref{SoftGEst}
that $|\nabla \ln v_k|
\le C'(\beta)$ in $B_{\beta/2}(0)$.
It follows, after passing to a subsequence, that for every $0<\alpha<1$,
$v_k\to v$ in $C^{\alpha}_{loc}(\RR^n)$, $v\in C^{0,1}_{loc}(\RR^n)$ and $v$ is super-harmonic on $\RR^n$.

Using the positivity, the superharmonic of $v$, and the
maximum principle, we can find $c_0 > 0$ such that  
\begin{equation}
v(y)\ge
2c_0 |y|^{2-n} ,\ \ \forall\ |y|\ge 1.
	\label{Eq:vSuperhar}
\end{equation}
Passing to a subsequence and shrinking $R_k$ and $c_0 > 0$, if necessary, we may assume that
\begin{equation}
|v_k(y)-v(y)|\le R_k^{-n},\quad
\ \ \forall\ |y|\le R_k,
\label{aa-0}
\end{equation}
and
\begin{equation}
v_k(y) \geq c_0(1+|y|)^{2-n}, \ \ \forall\   |y| < R_k.
\label{aa-0Ex}
\end{equation}

\begin{lem} \label{lem-1new}
Under the hypotheses of Theorem \ref{proposition1},
there 
 exists a function $\lambda^{(0)}: 
\RR^n \rightarrow (0,\infty)$ such that, for all $k$,
$$
(v_k)_{x,\lambda} \le v_k\
\mbox{in}\ B_{R_k}(0)\setminus B_\lambda(x), \forall \ 0<\lambda<
\lambda^{(0)}(x)\
\mbox{and}\  |x|\le \frac {R_k}5.
$$
\end{lem}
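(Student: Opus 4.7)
The plan is to prove the lemma by splitting, for each fixed $x$ with $|x|\le R_k/5$, the region $B_{R_k}(0)\setminus B_\lambda(x)$ into a ``near'' piece $B_{r_0}(x)\setminus B_\lambda(x)$ and a ``far'' piece $B_{R_k}(0)\setminus B_{r_0}(x)$, where $r_0=r_0(x)\in(0,1)$ will be chosen small depending on $x$.

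For the near piece, I would first observe that since $v_k\to v$ in $C^0_{loc}(\RR^n)$ with $v$ positive and continuous, the sequence $v_k$ is bounded above and away from zero on any fixed compact ball, uniformly in $k$. Applying Theorem \ref{SoftGEst} on $B_{|x|+2}(0)$ then yields a Lipschitz bound $|\nabla\ln v_k|\le L(x)$ on $B_{|x|+1}(0)$ uniform in $k$. Lemma \ref{lem-1} applied to each $v_k$ on $B_1(x)$ with this Lipschitz constant produces $(v_k)_{x,\lambda}\le v_k$ on $B_{r_0}(x)\setminus B_\lambda(x)$ for all $0<\lambda<r_0$, provided $r_0=r_0(x)$ is taken sufficiently small.

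For the far piece, the strategy is to compare the explicit upper bound coming from the Kelvin-type formula with the decay lower bound \eqref{aa-0Ex}. If $|y-x|\ge r_0$ and $\lambda\le\min(1,r_0)$, then the reflected point $x^*:=x+\lambda^2(y-x)/|y-x|^2$ lies in $B_{\lambda^2/r_0}(x)\subset B_{|x|+1}(0)$, so $v_k(x^*)\le M(x)$ for some constant $M(x)$ independent of $k$ (finite thanks to \eqref{aa-0} combined with continuity of $v$). Hence
\[
(v_k)_{x,\lambda}(y)=\frac{\lambda^{n-2}}{|y-x|^{n-2}}\,v_k(x^*)\le \frac{M(x)\,\lambda^{n-2}}{|y-x|^{n-2}},
\]
while $v_k(y)\ge c_0(1+|y|)^{2-n}$. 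Splitting the far piece into the bounded part $\{|y|\le 2|x|+2\}$ and the exterior $\{|y|>2|x|+2\}$, one checks in each subregion that $(1+|y|)/|y-x|$ is bounded by a constant depending only on $x$ (on the exterior part, $|y-x|\ge |y|/2$ gives this directly, while on the bounded part $(1+|y|)$ is just bounded). Therefore $(v_k)_{x,\lambda}(y)\le v_k(y)$ reduces to an inequality of the form $\lambda^{n-2}\le c(x)$ for some $c(x)>0$, and it is enough to take $\lambda^{(0)}(x):=\min\bigl(r_0,c(x)^{1/(n-2)}\bigr)$.

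The main obstacle is securing uniformity in $k$ in the far piece all the way out to $|y|=R_k\to\infty$; this is what forces us to use the global decay \eqref{aa-0Ex}, obtained from superharmonicity of the limit $v$ together with the uniform approximation \eqref{aa-0}. Once that lower bound is in hand, the remainder is a routine initialization step for the method of moving spheres, matching the argument in \cite{LiLi05}.
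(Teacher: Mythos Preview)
Your proposal is correct and follows essentially the same route as the paper's proof: a near/far decomposition around $x$, with Lemma~\ref{lem-1} (fed by the gradient bound from Theorem~\ref{SoftGEst}) handling the near piece, and the explicit Kelvin-transform upper bound together with the decay lower bound \eqref{aa-0Ex} handling the far piece. The only differences from the paper are cosmetic---the paper centers its auxiliary balls at $x$ (with radius $4r_1(x)=1+4|x|$) and splits the far region at $|y-x|=4r_1(x)$, whereas you center at the origin and split at $|y|=2|x|+2$---but the mechanism is identical.
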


\begin{proof}
For $|x| \leq \frac{R_k}{5}$, we have, by \eqref{aa-0} and \eqref{aa-0Ex}, for all $k$ that
\begin{equation}
\frac{1}{c_1(x)}\le  v_k\le c_1(x)\ \mbox{in}\ B_{4r_1(x)}(x) \subset B_{R_k}(0), \label{bound1}
\end{equation}
where 
\begin{align*}
r_1(x) = \frac{1}{4} + |x|, \text{ and }
c_1(x) = \max \Big\{1+ \sup_{B_{4r_1(x)}(x)} v, \frac{1}{c_0} (1 + |x| + 4r_1(x))^{n-2}\Big\}.
\end{align*}

By \eqref{bound1} and Theorem \ref{SoftGEst},
there exists $c_2(x)>0$, independent of $k$,  such that
$$
|\nabla \ln v_k|\le c_2(x)\ \mbox{in}\ B_{2r_1(x)}(x).
$$
Thus, by Lemma \ref{lem-1}, we can find $0 <  \lambda_1(x) 
< r_1(x)$ independent of $k$ such that
\begin{equation}
(v_k)_{x,\lambda} \le v_k\
\mbox{in}\ B_{\lambda_1(x)}(x)\setminus B_\lambda(x) \ \ \forall\ 
 0 < \lambda < \lambda_1(x).
\label{bound2}
\end{equation}

For $0 < \lambda < \lambda_1(x)$,  we have, using \eqref{bound1}, that
\begin{equation}
(v_k)_{x,\lambda}(y) \leq 
 \frac{\lambda^{n-2}c_1(x)}{|y - x|^{n-2}}  ,
\quad \forall \ y \in  B_{R_k}(0)\setminus B_{\lambda}(x).
	\label{Eq:B2-1}
\end{equation}
When $y \in B_{R_k}(0) \setminus B_{4r_1(x)}(x)$, $\frac{1}{2}(1 + |y|) < |y - x|$ and we obtain, using \eqref{Eq:B2-1} and \eqref{aa-0Ex}, that
\begin{equation}
(v_k)_{x,\lambda}(y) \leq \frac{(2\lambda)^{n-2}c_1(x)}{(1 + |y|)^{n-2}} \leq \frac{(2\lambda)^{n-2}c_1(x)}{c_0}\,v_k(y).
	\label{Eq:B3-1}
\end{equation}
When $y \in B_{4r_1(x)}(x) \setminus B_{\lambda_1(x)}(x)$, $1 + |y| \leq 2(1 + 3|x|)$, $|y - x| \geq \lambda_1(x)$ and we obtain, using \eqref{Eq:B2-1} and \eqref{aa-0Ex}, that
\begin{equation}
(v_k)_{x,\lambda}(y) \leq \frac{\lambda^{n-2} c_1(x)}{\lambda_1(x)^{n-2}} \leq \frac{(2\lambda)^{n-2} c_1(x) (1 + 3|x|)^{n-2}}{\lambda_1(x)^{n-2} c_0} v_k(y).
	\label{Eq:B4-1}
\end{equation}
Letting
\[
\lambda^{(0)}(x) = \min \Big\{ \lambda_1(x), \frac{\lambda_1(x)}{2(1 + 3|x|)}\Big[\frac{c_1(x)}{c_0}\Big]^{\frac{1}{n-2}}\Big\} \leq \lambda_1(x),
\]
we derive from \eqref{Eq:B3-1} and \eqref{Eq:B4-1} that
\begin{equation}
(v_k)_{x,\lambda} \leq 
 v_k \ \ \ \mbox{in}\
 B_{R_k}(0) \setminus B_{\lambda_1(x)}(x) 
\text{ and } 0 < \lambda < \lambda^{(0)}(x).
\label{bound3}
\end{equation}
Lemma \ref{lem-1new} follows from
\eqref{bound2} and \eqref{bound3}.
\end{proof}

\bigskip

Define, 
 for $x\in \RR^n$ and $|x|\le  R_k/5$,
that 
\[
\bar \lambda_k(x)
=\sup\Big\{0<\mu\le \frac {R_k}5\  |\
(v_k)_{x,\lambda} \le v_k\
\mbox{in}\ B_{R_k}(0)\setminus B_\lambda(x), \forall 0<\lambda<\mu\Big\}.
\]

\bigskip

By Lemma \ref{lem-1new},
$$\bar\lambda(x):=
\displaystyle{
\liminf_{k\to\infty}\bar\lambda_k(x)
}
\in [\lambda^{(0)}(x), \infty], \qquad x\in \RR^n.
$$

By \eqref{Eq:vSuperhar},
 $$
\alpha:= \liminf_{|y|\to \infty}
|y|^{n-2}v(y)\in (0, \infty].
$$

\begin{lem}\  Assume \eqref{01weak}-\eqref{0304weak}. Then either $v$ is constant or
$$
\alpha=\lim_{|y|\to \infty}
|y|^{n-2}v_{x,\bar\lambda(x)}(y)=
\bar \lambda(x)^{n-2}v(x)<\infty,\qquad
\forall\ x\in \RR^n.
$$
\label{lem-0.3}
\end{lem}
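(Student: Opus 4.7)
The plan is to prove the dichotomy by passing the moving-sphere inequality to the limit, computing the behaviour of $|y|^{n-2}v_{x,\lambda}(y)$ at infinity, and then ruling out strict inequality via a strong comparison principle at the critical radius. Fix $x \in \RR^n$ and $0 < \lambda < \bar\lambda(x)$. Since $\bar\lambda(x) = \liminf_k \bar\lambda_k(x)$, there is a subsequence $k_j \to \infty$ with $\lambda < \bar\lambda_{k_j}(x)$, so $(v_{k_j})_{x,\lambda} \le v_{k_j}$ on $B_{R_{k_j}}(0) \setminus \overline{B_\lambda(x)}$. Using $R_k \to \infty$ and the locally uniform convergence $v_k \to v$ established just before the statement, I would pass to the limit and obtain $v_{x,\lambda} \le v$ in $\RR^n \setminus \overline{B_\lambda(x)}$; letting $\lambda \nearrow \bar\lambda(x)$ and invoking continuity of $v$ extends this to $\lambda = \bar\lambda(x)$ when the latter is finite.

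Next, from the explicit formula
\[
|y|^{n-2}v_{x,\lambda}(y)=\Bigl(\frac{|y|}{|y-x|}\Bigr)^{n-2}\lambda^{n-2}\,v\Bigl(x+\frac{\lambda^2(y-x)}{|y-x|^2}\Bigr),
\]
the factor $(|y|/|y-x|)^{n-2} \to 1$ and, by continuity of $v$ at $x$, the last factor tends to $v(x)$ as $|y| \to \infty$, giving $\lim_{|y|\to\infty}|y|^{n-2}v_{x,\lambda}(y)=\lambda^{n-2}v(x)$ for every finite $\lambda > 0$. Combined with the previous paragraph, this yields $\alpha \ge \bar\lambda(x)^{n-2} v(x)$ for every $x$ (with the convention that the right-hand side is $+\infty$ when $\bar\lambda(x) = \infty$); in the finite case it simultaneously identifies the stated limit of $|y|^{n-2}v_{x,\bar\lambda(x)}(y)$ as $\bar\lambda(x)^{n-2} v(x)$.

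It remains to show that, if $v$ is nonconstant, then $\bar\lambda(x) < \infty$ and $\alpha = \bar\lambda(x)^{n-2} v(x)$ for every $x$. Assume for contradiction the reverse strict inequality $\alpha > \bar\lambda(x_0)^{n-2} v(x_0)$ at some $x_0$; the case $\bar\lambda(x_0) = \infty$ is included, since the previous paragraph then gives $\alpha = \infty$ and one may pick any $x_1$ with $\bar\lambda(x_1) < \infty$ to reduce to the finite case. At such an $x_1$, both $v$ and its Kelvin transform $v_{x_1, \bar\lambda(x_1)}$ are classical solutions of $f(\lambda(A^\cdot)) = 1$ on $\RR^n \setminus \overline{B_{\bar\lambda(x_1)}(x_1)}$ by the conformal invariance \eqref{Eq:CIProp}, they agree on $\partial B_{\bar\lambda(x_1)}(x_1)$, and $v_{x_1, \bar\lambda(x_1)} \le v$ outside. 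Under the ellipticity \eqref{0304weak}, the strong maximum principle leaves two possibilities: either equality $v \equiv v_{x_1, \bar\lambda(x_1)}$ on the exterior, in which case the asymptotic from the previous paragraph applied to $v = v_{x_1, \bar\lambda(x_1)}$ gives $|y|^{n-2} v(y) \to \bar\lambda(x_1)^{n-2} v(x_1)$, i.e.\ $\alpha = \bar\lambda(x_1)^{n-2} v(x_1)$, contradicting strict inequality; or $v_{x_1, \bar\lambda(x_1)} < v$ strictly in the open exterior with a positive outward Hopf gap on $\partial B_{\bar\lambda(x_1)}(x_1)$. In the second alternative, the Hopf gap combined with the positive at-infinity gap $\alpha - \bar\lambda(x_1)^{n-2} v(x_1) > 0$ allows one to push the inequality to $v_{x_1, \lambda} \le v$ for $\lambda \in [\bar\lambda(x_1), \bar\lambda(x_1) + \delta)$ with some $\delta > 0$ via a standard continuation argument; using the uniform gradient estimate from Theorem \ref{SoftGEst} and the $C^0_{loc}$ convergence $v_k \to v$, this extended inequality persists for $v_{k_j}$ with $j$ large, contradicting $\bar\lambda(x_1) = \liminf_k \bar\lambda_k(x_1)$. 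Hence $\bar\lambda \equiv \infty$, and a standard moving-sphere symmetrization — for arbitrary $y_1 \ne y_2$, pick $x_t$ on the line through $y_1, y_2$ tending to infinity with $\lambda_t^2 = |y_1 - x_t|\,|y_2 - x_t|$, so that the reflection through $\partial B_{\lambda_t}(x_t)$ swaps $y_1, y_2$; then $v_{x_t, \lambda_t}(y_2) \le v(y_2)$ yields $(|y_1 - x_t|/|y_2 - x_t|)^{(n-2)/2} v(y_1) \le v(y_2)$, and $t \to \infty$ gives $v(y_1) \le v(y_2)$, and by symmetry $v(y_1) = v(y_2)$ — forces $v$ to be constant, contradicting the standing assumption. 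The main obstacle will be this extension step: applying the fully nonlinear strong maximum principle and Hopf lemma under only \eqref{01weak}-\eqref{0304weak}, and transferring the extended inequality back to $v_k$ with a uniform $\delta > 0$ in order to genuinely contradict $\bar\lambda(x_1) = \liminf_k \bar\lambda_k(x_1)$.
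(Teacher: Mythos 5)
Your plan takes a genuinely different route from the paper, and it runs into a real obstruction. The paper does not attempt a strong maximum principle / Hopf lemma argument on the limit $v$ at all. Instead, it applies these tools at the level of the $C^2$ functions $v_k$ on the finite balls $B_{R_k}(0)$: since $(v_k)_{x,\bar\lambda_k(x)}\le v_k$ in $B_{R_k}(0)\setminus B_{\bar\lambda_k(x)}(x)$, with equality on $\partial B_{\bar\lambda_k(x)}(x)$, the classical strong maximum principle and Hopf lemma for the elliptic operator (applied to $v_k$ and its Kelvin transform, both $C^2$ classical solutions) force a touching point $y_k\in\partial B_{R_k}(0)$ with $(v_k)_{x,\bar\lambda_k(x)}(y_k)=v_k(y_k)$; taking $k\to\infty$ and using the quantitative closeness \eqref{aa-0} (namely $|v_k-v|\le R_k^{-n}$ on $B_{R_k}(0)$) gives $\alpha\le\bar\lambda(x)^{n-2}v(x)$ directly. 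The reverse inequality is the same as in your second paragraph. Step 2 of the paper (the dichotomy with $v$ constant) then follows from the contrapositive of Step 1 together with Corollary \ref{lem-app1}.

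The first and most serious gap in your proposal is the assertion that ``both $v$ and its Kelvin transform $v_{x_1,\bar\lambda(x_1)}$ are classical solutions.'' They are not: the passage in the paper immediately before the lemma establishes only $v\in C^{0,1}_{loc}(\RR^n)$, so $v$ is merely a Lipschitz (viscosity) solution. Your entire contradiction argument — strong maximum principle, Hopf gap, continuation — is built on classical regularity of $v$ that is not available. There do exist viscosity versions of these tools, but they are nontrivial and not what you invoke; the paper deliberately sidesteps the problem by never applying these principles to $v$ itself.

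The second gap is the ``transfer back to $v_{k_j}$'' step. You propose to deduce $(v_{k_j})_{x_1,\lambda}\le v_{k_j}$ on $B_{R_{k_j}}(0)\setminus B_\lambda(x_1)$ for $\lambda$ slightly larger than $\bar\lambda(x_1)$ from $v_{x_1,\lambda}\le v$ plus $C^0_{loc}$ convergence and the gradient estimate. But the critical region is $\partial B_{R_{k_j}}(0)$, which escapes every fixed compact set, so $C^0_{loc}$ convergence gives no control there; the only way to run such a transfer is via the quantitative estimate \eqref{aa-0}, comparing the error $R_k^{-n}$ to the gap of order $R_k^{2-n}$ coming from $\alpha-\bar\lambda(x_1)^{n-2}v(x_1)>0$. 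You do not invoke \eqref{aa-0}, and even with it the argument would need to be spelled out region by region (near $\partial B_{\bar\lambda(x_1)}(x_1)$ via the Hopf gap, on an intermediate compact annulus via uniform convergence, near $\partial B_{R_k}(0)$ via \eqref{aa-0}). This is substantially heavier machinery than the paper's single touching-point argument at the $v_k$ level, and it still would not repair the first gap.

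Your first two paragraphs (passing $v_{x,\lambda}\le v$ to the limit, computing $\lim_{|y|\to\infty}|y|^{n-2}v_{x,\lambda}(y)=\lambda^{n-2}v(x)$, and concluding $\alpha\ge\bar\lambda(x)^{n-2}v(x)$) are correct and match the corresponding half of the paper's Step 1. The moving-sphere symmetrization in your final sentence that turns $\bar\lambda\equiv\infty$ into $v$ constant is also fine and is exactly Corollary \ref{lem-app1}. The missing piece is the upper bound $\alpha\le\bar\lambda(x)^{n-2}v(x)$, and for that you should follow the paper: extract the touching point $y_k\in\partial B_{R_k}(0)$ from the strong maximum principle applied to $v_k$ (not $v$), and then pass to the limit using \eqref{aa-0}.
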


\begin{proof}

\noindent{\bf Step 1.}\ If $\bar \lambda(x)<\infty$ for some $x\in \RR^n$,
then 
$$
\alpha= \lim_{|y|\to \infty}
|y|^{n-2}v_{x,\bar\lambda(x)}(y)
=\bar \lambda(x)^{n-2}v(x)
<\infty.
$$

Since
 $\bar \lambda(x)<\infty$,
we have,   along a subsequence, $\bar \lambda_k(x)\to \bar \lambda(x)$ ---
but for simplicity, we still use $\{\bar \lambda_k(x)\}$,
 $\{v_k\}$, etc to denote the
subsequence.
By the definition of $\bar\lambda_k(x)$, we have
\begin{equation}
(v_k)_{x,\bar \lambda_k(x)} \le v_k\
\mbox{in}\ B_{R_k}(0)\setminus B_{\bar \lambda_k(x)}(x), 
\label{vkeq}
\end{equation}
By the conformal invariance \eqref{Eq:CIProp}, 
$(v_k)_{x,\bar \lambda_k(x)}$ satisfies
\begin{equation}
f(\lambda(A^{(v_k)_{x,\bar \lambda_k(x)}}))=1,
\quad \ \ \mbox{in}\ B_{R_k}(0)\setminus \overline{  B_{\bar \lambda_k(x)}(x) }.
\label{vkeqnew}
\end{equation}
Using  \eqref{ve}, \eqref{vkeq}, \eqref{vkeqnew}, the definition
of $\bar \lambda_k(x)$,
and using the ellipticity of the
equation satisfied by $v_k$ and $(v_k)_{x,\bar \lambda_k(x)}$,
we can apply 
the strong maximum principle and Hopf Lemma to infer the existence of 
some $y_k\in \partial B_{R_k}(0)$ such that
$$
(v_k)_{x,\bar \lambda_k(x)}(y_k)=v_k(y_k)
$$
--- see the proof of \cite[Lemma 4.5]{LiLi05}.

It follows 
that
$$
\lim_{k\to\infty} |y_k|^{n-2} v_k(y_k)
= \lim_{k\to\infty} |y_k|^{n-2} (v_k)_{x,\bar \lambda_k(x)}(y_k)
=(\bar \lambda(x))^{n-2}v(x).
$$
This implies, in view of \eqref{aa-0}, that
$$
\alpha\le \lim_{k\to\infty} |y_k|^{n-2} v(y_k)=\bar \lambda(x)^{n-2}v(x)
=\lim_{|y|\to \infty}
|y|^{n-2}v_{x,\bar\lambda(x)}(y)<\infty.
$$

On the other hand, if $\hat y_i$ is such that
$|\hat y_i|\to \infty$ and
$$
\alpha= \lim_{i\to\infty} |y_i|^{n-2}v(y_i),
$$
then, since 
$v_{ x, \bar \lambda(x) }\le v$
in $\RR^n\setminus B_{\bar \lambda(x) }(x)$, we have
$$
v(y_i)\ge \frac {   \bar \lambda(x) ^{n-2} }
{  |y_i-x|^{n-2}  }
v\left(x+\frac {\bar\lambda(x)^2(y_i-x) }{   |y_i-x|^2  }\right).
$$
This gives
$$
\alpha= \lim_{i\to\infty} |y_i|^{n-2}v(y_i)
\ge \bar \lambda(x)^{n-2}v(x).
$$

Step 1 is established.

\medskip

\noindent{\bf Step 2.}\  It remains to show that either $v$ is constant or, for every $x\in \RR^n$,
$\bar \lambda(x)<\infty$.

\medskip

To this end, we show that if $\bar \lambda(x)=\infty$ for some $x \in \RR^n$, then $v$ is constant. Indeed, assume that  $\bar \lambda_k(x)
\to \infty$ as $k\to\infty$.
We easily derive from this and the convergence of $v_k$ to $v$ that
\begin{equation}
v_{x,\lambda}\le v\quad \mbox{in}\
\RR^n\setminus B_\lambda(x)\ \ \forall\ \lambda>0.
\label{vx}
\end{equation}
The above is equivalent to the property that for every
fixed unit vector $e$, 
$r^{ \frac{n-2}2 }v(x+ re)$ is non-decreasing
in $r$.  Thus
$$
r^{n-2} \min_{ \partial B_r(x) }v\ge 
r^{ \frac {n-2}2 }
\min_{ \partial B_1(x) }v\quad \forall\ r\ge 1.
$$
In particular,
$\alpha=\liminf_{ |y|\to \infty} |y|^{n-2}v(y)=\infty$.
This implies, by Step 1, that 
$\bar\lambda(x)=\infty$ for {\it every} $x\in \RR^n$, and therefore
\eqref{vx} holds for {\it every} $x\in \RR^n$.
This implies that
$v$ is a constant, see Corollary \ref{lem-app1}.
\end{proof}

\begin{lem}
Assume \eqref{01weak}-\eqref{0304weak} and \eqref{Eq:ActaSIdiag}. Then the function $v$ in Theorem \ref{proposition1} cannot be constant.
\end{lem}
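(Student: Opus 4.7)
The plan is to argue by contradiction: assume $v \equiv c$ for some constant $c > 0$, and construct a smooth test function that touches $v$ from below at some point while its conformal Hessian has eigenvalues $(t_0,\ldots,t_0)$ with $f(t_0,\ldots,t_0) < 1$. Combined with the fact that $v$ inherits the viscosity supersolution property from the sequence $v_k$, this will contradict Definition \ref{Def:fVisSol}.

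The first step is to upgrade the locally uniform convergence $v_k \to v$. Each $v_k$ is a classical $C^2$ solution of $f(\lambda(A^{v_k})) = 1$ on $B_{R_k}(0)$, hence automatically a viscosity supersolution there. Invoking the stability of viscosity supersolutions under locally uniform convergence (Appendix \ref{Sec:AppVS}) promotes the constant function $v \equiv c$ to a viscosity supersolution of $f(\lambda(A^v)) = 1$ on all of $\RR^n$.

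The second step is the construction. Use \eqref{Eq:ActaSIdiag} to fix $t_0 > 0$ with $f(t_0,\ldots,t_0) < 1$, and set
\[
\epsilon := \tfrac{1}{2}\,t_0\,c^{4/(n-2)} > 0, \qquad \varphi(x) := c\bigl(1 + \epsilon|x|^2\bigr)^{-(n-2)/2}.
\]
Then $\varphi(0) = c = v(0)$ and $\varphi(x) < c = v(x)$ for every $x \neq 0$, so $\varphi$ touches $v$ from below at $x_0 = 0$. The key identity is $A^\varphi \equiv t_0\,I$ on $\RR^n$: writing $\varphi$ as a positive constant multiple of the conformal image $U_\psi$ of $U$ under the dilation $\psi(y) := \sqrt\epsilon\,y$ and using the conformal invariance \eqref{Eq:CIProp} with $A^U \equiv 2I$, together with the elementary scaling $A^{au} = a^{-4/(n-2)}\,A^u$ for positive constants $a$, yields $A^\varphi \equiv 2\,c^{-4/(n-2)}\,\epsilon\,I = t_0\,I$. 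In particular $\lambda(A^\varphi(0)) = (t_0,\ldots,t_0)$.

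Feeding $\varphi$ into the viscosity supersolution condition of Definition \ref{Def:fVisSol} at $x_0 = 0$ then forces
\[
1 \leq f\bigl(\lambda(A^\varphi(0))\bigr) = f(t_0,\ldots,t_0) < 1,
\]
the desired contradiction. The only non-routine ingredient is the stability of viscosity supersolutions for this fully nonlinear and possibly non-homogeneous operator at the boundary of $\Gamma$, which is precisely what Appendix \ref{Sec:AppVS} is designed to supply; the remaining verifications -- that $(t_0,\ldots,t_0) \in \overline\Gamma$ so that the expression $f(t_0,\ldots,t_0)$ is meaningful, which is automatic from $\Gamma \supset \Gamma_n$ in the main-body hypotheses, and the direct check that $\varphi$ strictly touches $v$ from below -- are immediate.
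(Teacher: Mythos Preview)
Your argument is correct and rests on the same idea as the paper's: touch the constant $v$ from below by a smooth $\varphi$ whose conformal Hessian is controlled by $t_0 I$, then contradict \eqref{Eq:ActaSIdiag}. Two points of difference are worth noting. First, you choose the bubble $\varphi(x)=c(1+\epsilon|x|^2)^{-(n-2)/2}$ so that $A^\varphi\equiv t_0 I$ exactly, whereas the paper uses the parabola $v(0)-t|x|^2$ and only estimates $A^{\varphi_k}(x_k)\le (Ct+C\delta)I$; your choice is cleaner and avoids that estimate. Second, you route the contradiction through the viscosity supersolution property of the limit $v$ via Appendix~\ref{Sec:AppVS}, whereas the paper perturbs $\varphi$ and touches the $C^2$ functions $v_k$ directly. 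This latter difference is not purely cosmetic: Proposition~\ref{Prop:VisConv}(b) relies on the structural conditions \eqref{Eq:UMono} and \eqref{Eq:SuperClosed}, which are verified (Remark~\ref{Cor:VCConf}) under the full hypotheses \eqref{01}--\eqref{04} but not under the bare \eqref{01weak}--\eqref{0304weak} stated in the lemma. So your appeal to Appendix~\ref{Sec:AppVS} implicitly uses the paper's blanket convention to work under \eqref{01}--\eqref{FU-1a}; the paper's direct argument with $v_k$, together with \eqref{02weakX} and \eqref{0304weak}, goes through under the weaker hypotheses as literally stated.
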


\begin{proof}
Fix some $t > 0$ for the moment. Set $\varphi(x) = v(0) - t\,|x|^2$ and fix some $r > 0$ such that $\varphi > 0$ in $B_{r}(0)$ and $\varphi < v_k$ on $\partial B_r(0)$ for all sufficiently large $k$. Let
\[
\gamma_k = \sup_{B_r(0)} (\varphi - v_k) \text{ and } \varphi_k = \varphi - \gamma_k.
\]
Then $\varphi_k \leq v_k$ in $B_r(0)$ and $\varphi_k(x_k) = v_k(x_k)$ for some $x_k \in \bar B_r(0)$. Noting that
\[
\gamma_k = (\varphi - v)(x_k) - (v_k - v)(x_k)    = -t|x_k|^2 - (v_k - v)(x_k) 
\]
and
\[
\lim_{k \rightarrow \infty} \gamma_k = \sup_{B_r(0)} (\varphi - v) = 0
\]
we deduce that $x_k \rightarrow 0$. This leads to
\[
\varphi_k(x_k) = v_k(x_k), \nabla \varphi_k(x_k) = \nabla v_k(x_k), \nabla^2\varphi_k(x_k) \leq \nabla^2 v_k(x_k)
\]
and
\[
A^{\varphi_k}(x_k) \geq A^{v_k}(x_k).
\]
Noting that there is some $C > 0$ independent of $\delta$ and $k$ such that, for large $k$, 
\[
A^{\varphi_k}(x_k) \leq \Big(\frac{4}{n-2}v(0)^{-\frac{n+2}{n-2}}t + C\delta\Big)I.
\]
Thus, we can select $t$ and $\delta$ such that
\[
A^{v_k}(x_k) \leq A^{\varphi_k}(x_k) \leq t_0\,I.
\]
where $t_0$ is the constant in \eqref{Eq:ActaSIdiag}. Since $f(\lambda(A^{v_k}(x_k))) = 1$, this contradicts \eqref{02weakX}, \eqref{0304weak} and \eqref{Eq:ActaSIdiag}.
\end{proof}

Recall that
$0<v\in C^{0,1}_{loc}(\mathbb R^n)$,
$\Delta v\le 0\ \mbox{in}\ \mathbb R^n$,
and it remains to consider the case that, for every $x\in\mathbb R^n$, there exists $0<\bar\lambda(x)<\infty$ such that
\[
v_{x,\bar\lambda(x)}(y)\le v(y),\ \forall\ |y-x|\ge \bar\lambda(x),
\]
and
\[
\lim_{|y|\to \infty}
|y|^{n-2}v_{x,\bar\lambda(x)}(y)=
\alpha:=\liminf_{|y|\to\infty}|y|^{n-2}v(y)<\infty.
\]

If $v$ is in $C^2(\mathbb R^n)$, the conclusion of Theorem \ref{proposition1}
follows from the proof of theorem 1.3 in \cite{LiLi05}.
An observation made in \cite{Li07-ARMA} easily
allows the proof to hold for $v\in C^{0,1}_{loc}(\mathbb R^n)$.
For readers' convenience, we outline the proof below.

Let
$\psi(y)=\frac y{|y|^2}$. We denote
$$
v_\psi := |J_\psi|^{ \frac {n-2}{2n}} v\circ \psi.
$$
We know that
$\lambda(A^{ v_\psi }(y))=\lambda(A^v (\psi (y))$. Namely, $A^{ v_\psi }(y)$ and $ A^v (\psi (y))$ differ only
by an orthogonal conjugation.

Introduce
\[
w^{(x)}:=(v_{x,\bar\lambda(x)})_\psi,\ x\in \RR^n.
\]
We deduce from the above properties of $v$ that
for every $x\in \mathbb R^n$, there exists some $\delta(x)>0$ such that
\[
v_\psi\ge w^{(x)}\ \mbox{in}\ B_{\delta(x)}(0)\setminus\{0\},
\]
\[
w^{(x)}(0)=\alpha=\liminf_{y\to 0}v_\psi(y),
\]
\[
\Delta v_\psi\le 0,\ \mbox{in}\ B_{\delta(x)}(0)\setminus\{0\},
\]

 Let
$D=\{x\in \RR^n\ |\ v\ \mbox{is differentiable at}
 \ x\}$.  Since $v\in C^{0,1}_{loc}(\RR^n)$, the  Lebesgue measure
of $\mathbb R^n\setminus D$ is $0$.
It is clear that 
 $w^{(x)}(y)$ is differentiable at $y=0$
if $v$ is differentiable at $x$.

By \cite[Lemma 4.1]{LiLi05}, 
\[
\nabla w^{(x)}(0)=\nabla w^{(\tilde x)}(0),\ \ \forall\ x, \tilde x\in D.
\]
Namely, for some $V\in \RR^n$,
\[
\nabla w^{(x)}(0)=V,\ \forall\ x\in D.
\]

A calculation yields
\[
\nabla w^{(x)}(0)=(n-2)\alpha x+\alpha^{ \frac n{n-2} }
v(x)^{ \frac n{n-2} }
\nabla v(x).
\]
 Thus
\[
\nabla_x\big( \frac {n-2}2 \alpha^{ \frac n{n-2} }
v(x) ^{ -\frac 2{n-2} }
- \frac {n-2}2 \alpha |x|^2 +V\cdot x \big)=0,\ \forall\ x\in D.
\]

 Consequently, for some $\bar x\in \RR^n$ and $d\in \RR$,
\[
v(x) ^{ -\frac 2{n-2} }
\equiv
\alpha^{ -\frac 2{n-2} }
|x-\bar x|^2+d \alpha^{ -\frac 2{n-2} }.
\]

 Since $v>0$, we must have $d>0$, so
\[
v(x)\equiv \big( \frac {  \alpha^{ \frac 2{n-2} }  }
{d+|x-\bar x|^2}\big) ^{ \frac {n-2}2 }.
\]
We have proved that $v$ is of the form
\eqref{vform}
for some $\bar x\in \RR^n$ and some positive constants
$a$ and $b$. 

To finish the proof, we show that $f(2b^2a^{-2}, \ldots, 2b^2a^{-2}) = 1$ when \eqref{Eq:ActaSIdiag} and \eqref{02weakY} are in effect. For $\delta > 0$, let
\[
v_\delta(x) = v(x) - \delta|x|^2.
\]
Since $v_k \rightarrow v$ in $C^0(\bar B_\delta(0))$, there exists $\beta_k \rightarrow 0$ and $x_k \rightarrow 0$ such that $\hat v_k := v_\delta + \beta_k$ satisfies
\[
(v_k - \hat v_k)(x_k) = 0 \text{ and } v_k - \hat v_k \geq 0 \text{ near } x_k.
\]
We have $A^{\hat v_k}(x_k) \geq A^{v_k}(x_k)$. Therefore, by \eqref{02weakY}, $\lambda(A^{\hat v_k}(x_k)) \in \Gamma$, and by \eqref{0304weak}, 
\begin{equation}
f(\lambda(A^{\hat v_k}(x_k))) \geq f(\lambda(A^{v_k}(x_k))) = 1.
	\label{Eq:13IV2016}
\end{equation}
Noting that $A^{\hat v_k}(x_k) \rightarrow 2b^2a^{-2}I$ as $\delta \rightarrow 0, k \rightarrow \infty$, we infer that $2b^2a^{-2} > t_0$. (Indeed, if $2b^2a^{-2} \leq t_0$, then, for small $\rho > 0$, we have $A^{\hat v_k}(x_k) < (t_0 + \rho)I$ for small $\delta$ and large $k$, which implies, by \eqref{Eq:13IV2016}, \eqref{02weakX} and \eqref{0304weak}, that $1 \leq f(\lambda(A^{\hat v_k}(x_k)) < f(t_0 + \rho, \ldots, t_0 + \rho)$, which contradicts \eqref{Eq:ActaSIdiag}.) In view of \eqref{02weakY}, this implies that $(2b^2a^{-2}, \ldots, 2b^2a^{-2}) \in \Gamma$. We can now send $k \rightarrow \infty$ and then $\delta \rightarrow 0$ in \eqref{Eq:13IV2016} to obtain
\[
f(\lambda(A^v(0))) \geq 1, \text{ i.e. } f(2b^2a^{-2}, \ldots, 2b^2a^{-2}) \geq 1.
\]

Using $v^\delta(x) = v(x) + \delta|x|^2$ instead of $v_\delta$ and the fact that $(2b^2a^{-2}, \ldots, 2b^2a^{-2}) \in \Gamma$, one can easily derive
\[
f(2b^2a^{-2}, \ldots, 2b^2a^{-2}) \leq 1.
\]
Theorem \ref{proposition1} is established.
\end{proof}

\subsection{Proof of Theorem \ref{proposition1-deg}}\label{sec:Tp1deg}

\begin{proof}[Proof of Theorem \ref{proposition1-deg}]
We start with some preparation as in the proof of Theorem \ref{proposition1}. We may assume that $R_k \geq 5$ for all $k$.

By hypotheses, $v_k$ is super-harmonic and positive on $\RR^n \setminus \{0\}$. Therefore, $v_*$ is super-harmonic and non-negative on $\RR^n \setminus \{0\}$. Hence either $v_* \equiv 0$ or $v_* > 0$ in $\RR^n \setminus \{0\}$. In the former case we are done. We assume henceforth that the latter holds.

Now, for every $\beta>2$, there exists some positive constant 
$C(\beta)$, independent of $k$, such that 
$C(\beta)^{-1} \leq M_k v_k\le C(\beta)$ in $B_\beta(0) \setminus B_{1/\beta}(0)$. It follows from Theorem \ref{SoftGEst}
that $|\nabla \ln v_k|
\le C'(\beta)$ in $B_{\beta/2}(0) \setminus B_{2/\beta}(0)$.
It follows, after passing to a subsequence, that for every $0<\alpha<1$,
$M_k v_k\to v_*$ in $C^{\alpha}_{loc}(\RR^n \setminus \{0\})$, $v_*\in C^{0,1}_{loc}(\RR^n \setminus \{0\})$ and
\begin{equation}
|\nabla \ln v_*|
\le C'(\beta) \text{ in } B_{\beta/2}(0) \setminus B_{2/\beta}(0).
	\label{Eq:v*Harnack}
\end{equation}

By the super-harmonicity and the positivity of $v_*$, we can find $c_0 > 0$ such that 
\begin{equation}
v_*(y)\ge
2c_0 |y|^{2-n} ,\ \ \forall\ |y|\ge 1.
	\label{Eq:vSuperhardeg}
\end{equation}
Hence, passing to a subsequence and shrinking $R_k$ and $c_0 > 0$ if necessary, we can assume without loss of generality that, for all $k$,
\begin{equation}
|M_kv_k(y)-v_*(y)|\le (R_k)^{-n},\quad
\ \ \forall\ R_k^{-1}\le |y|\le R_k
\label{aadeg-0}
\end{equation}
and
\begin{equation}
M_kv_k(y) \geq c_0(1+|y|)^{2-n} \ \ \forall 0< |y| < R_k.
\label{aadeg-0Ex}
\end{equation}

Denote
$$
(v_k)_{x,\lambda}(y):=(\frac \lambda {|y-x|})^{n-2}v_k(x+
\frac {\lambda^2(y-x)} { |y-x|^2}),
$$
 the Kelvin transformation of $v_k$.
We use $(v_k)_\lambda$ to denote $(v_k)_{0, \lambda}$.

\begin{lem} \label{lem-1deg}
Under the hypotheses of Theorem \ref{proposition1-deg},
there 
 exists a function $\lambda^{(0)}: 
\RR^n \setminus \{0\} \rightarrow (0,\infty)$ such that $\lambda^{(0)}(x) \leq |x|$ and, for all $k$,
$$
(v_k)_{x,\lambda} \le v_k\
\mbox{in}\ B_{R_k}(0)\setminus (B_\lambda(x) \cup \{0\}), \forall \ 0<\lambda<
\lambda^{(0)}(x)\
\mbox{and}\  |x|\le \frac {R_k}5.
$$
\end{lem}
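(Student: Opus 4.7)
The plan is to follow the proof of Lemma \ref{lem-1new} closely, with two adaptations dictated by the singular setting. First, to ensure that the inversion $y \mapsto x + \lambda^2(y-x)/|y-x|^2$ maps $B_{R_k}(0) \setminus B_\lambda(x)$ into $\RR^n \setminus \{0\}$, observe that the image point $z$ satisfies $|z-x| < \lambda$, hence $|z| \geq |x| - \lambda$; this forces the restriction $\lambda^{(0)}(x) < |x|$ appearing in the statement. Second, since $M_k(v_k)_{x,\lambda} = (M_kv_k)_{x,\lambda}$ and Theorem \ref{SoftGEst} controls $|\nabla \ln v_k| = |\nabla \ln (M_kv_k)|$, the convergence $M_kv_k \to v_*$ plays the role that $v_k \to v$ did in Lemma \ref{lem-1new}, modulo carrying the factor $M_k$ in the explicit bounds.

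Fix $x \in \RR^n \setminus \{0\}$ with $|x| \leq R_k/5$ and set $r_1(x) := |x|/8$, so that $B_{4r_1(x)}(x) \Subset B_{R_k}(0) \setminus \{0\}$. From \eqref{aadeg-0} and \eqref{aadeg-0Ex} one obtains a uniform two-sided bound $c_1(x)^{-1} \leq M_k v_k \leq c_1(x)$ on $B_{4r_1(x)}(x)$ for all sufficiently large $k$. Theorem \ref{SoftGEst} applied to $v_k$ on $B_{2r_1(x)}(x)$ then yields $|\nabla \ln v_k| \leq c_2(x)$ there, and Lemma \ref{lem-1} produces some $\lambda_1(x) \in (0, r_1(x))$ with $(v_k)_{x,\lambda} \leq v_k$ on $B_{\lambda_1(x)}(x) \setminus B_\lambda(x)$ for every $\lambda \in (0,\lambda_1(x))$. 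Because $\lambda_1(x) < r_1(x) < |x|$, this near-$x$ comparison automatically avoids the singularity at the origin.

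To extend the inequality to the rest of $B_{R_k}(0) \setminus (B_{\lambda_1(x)}(x) \cup \{0\})$, I would use the crude bound $(v_k)_{x,\lambda}(y) \leq \lambda^{n-2} c_1(x)/(M_k|y-x|^{n-2})$ together with the lower bound $v_k(y) \geq c_0/(M_k(1+|y|)^{n-2})$ from \eqref{aadeg-0Ex}. As in \eqref{Eq:B3-1}--\eqref{Eq:B4-1}, the region would be split into the intermediate annulus $B_{4r_1(x)}(x) \setminus B_{\lambda_1(x)}(x)$ (where $|y-x| \geq \lambda_1(x)$ and $|y| \leq 3|x|/2$) and the outer part $B_{R_k}(0) \setminus B_{4r_1(x)}(x)$; the outer part in turn splits between large $|y|$ (where $|y - x| \geq |y|/2$) and the genuinely new near-origin sliver $\{0 < |y| < r_1(x)\}$, on which $|y - x| \geq 7|x|/8$ and $1 + |y| \leq 2$. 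In every case $(v_k)_{x,\lambda}(y)/v_k(y) \leq C(x)\lambda^{n-2}$, so choosing $\lambda^{(0)}(x) \in (0, \lambda_1(x)]$ small enough closes the argument. The main point to monitor is that the behavior of $v_k$ near the singularity does not spoil the comparison; since we restrict $\lambda < |x|$ the Kelvin image stays a definite distance away from $0$, and \eqref{aadeg-0Ex}, valid down to $0 < |y| < R_k$, supplies exactly the lower bound one needs there.
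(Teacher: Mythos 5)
Your proposal follows the paper's proof essentially step for step: restrict to $r_1(x)=|x|/8$ so the whole moving-sphere apparatus stays away from the origin, use $\eqref{aadeg-0}$--$\eqref{aadeg-0Ex}$ and Theorem \ref{SoftGEst} to invoke Lemma \ref{lem-1}, cap $\lambda^{(0)}(x)<|x|$ so the Kelvin image avoids $0$, and then propagate the near-$x$ comparison outward via a crude pointwise bound on $(v_k)_{x,\lambda}$ against the lower bound $\eqref{aadeg-0Ex}$ on $v_k$. Two of your auxiliary inequalities are not quite right as stated, though neither is fatal. First, on the "large $|y|$'' piece of the outer region $B_{R_k}(0)\setminus B_{4r_1(x)}(x)$, the inequality $|y-x|\ge |y|/2$ fails for points just outside $B_{|x|/2}(x)$ with $|y|$ of order $|x|$ (e.g.\ $|y-x|=|x|/2$, $|y|\approx 3|x|/2$); the paper avoids this by taking the inner region to be $B_{1+4|x|}(x)$ rather than $B_{4r_1(x)}(x)=B_{|x|/2}(x)$, which makes $1+|y|\le 2|y-x|$ hold uniformly on the complement. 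Second, on the near-origin sliver $\{0<|y|<r_1(x)\}$ you wrote $1+|y|\le 2$, which uses $r_1(x)=|x|/8\le 1$, i.e.\ $|x|\le 8$; since $|x|$ ranges up to $R_k/5$, the correct bound is $1+|y|\le 1+|x|/8$, which is still an $x$-dependent constant and suffices. With these two thresholds adjusted, your estimate $(v_k)_{x,\lambda}(y)/v_k(y)\le C(x)\lambda^{n-2}$ holds on all three pieces, and the conclusion follows exactly as in the paper.
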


\begin{proof}
We adapt the proof of Lemma \ref{lem-1new}. For $0 < |x| < \frac{R_k}{5}$, we have, by \eqref{aadeg-0} and \eqref{aadeg-0Ex},  for all $k$ that
\begin{equation}
\frac{1}{c_1(x)}\le  M_k v_k\le c_1(x)\ \mbox{in}\ B_{4r_1(x)}(x) \subset B_{R_k}(0),
\label{bound1-deg}
\end{equation}
where
\begin{align*}
r_1(x) = \frac{1}{8}|x| \text{ and } c_1(x) = \max\Big\{1 + \sup_{B_{4r_1(x)}(x)} v_*, \frac{1}{c_0}(1 + 2|x|)^{n-2}\Big\}.
\end{align*}

By Theorem \ref{SoftGEst} and \eqref{bound1-deg}, there exists $c_2(x)>0$, independent of $k$,  such that
$$
|\nabla \ln v_k|\le c_2(x)\ \mbox{in}\ B_{2r_1(x)}(x).
$$
Thus, by Lemma \ref{lem-1}, we can find $0 < \lambda_1(x) < r_1(x)$ independent of $k$ such that
\begin{equation}
(v_k)_{x,\lambda} \leq v_k \text{ in } B_{\lambda_1(x)}(x) \setminus (B_\lambda(x) \cup \{0\}) \text{ for all } 0 < \lambda < \lambda_1(x).
	\label{Eq:E2-1}
\end{equation}
For $0 < \lambda < \lambda_1(x)$, we have, using \eqref{bound1-deg}, that
\begin{equation}
(v_k)_{x,\lambda}(y) \leq \frac{\lambda^{n-2}c_1(x)}{|y - x|^{n-2}} \text{ for } y \in B_{R_k}(0) \setminus (B_{\lambda}(x) \cup\{0\}).
	\label{Eq:E3-1}
\end{equation}
For $y \in B_{R_k}(0) \setminus (B_{1 + 4|x|}(x) \cup \{0\})$, we have $\frac{1}{2}(1 + |y|) \leq |y - x|$ and we obtain, using \eqref{Eq:E3-1} and \eqref{aadeg-0Ex}, that
\begin{equation}
(v_k)_{x,\lambda}(y) \leq \frac{(2\lambda)^{n-2} c_1(x)}{c_0} v_k(y).
	\label{Eq:E3-2}
\end{equation}
For $y \in B_{1 + 4|x|}(x) \setminus  (B_{\lambda_1(x)}(x) \cup \{0\})$, we have $1 + |y| \leq 2(1+3|x|)$, $|y - x| \geq \lambda_1(x)$ and we obtain, using \eqref{Eq:E3-1} and \eqref{aadeg-0Ex}, that
\begin{equation}
(v_k)_{x,\lambda}(y) \leq \frac{(2\lambda)^{n-2} c_1(x) (1 + 3|x|)^{n-2}}{c_0 \lambda_1(x)^{n-2}} v_k(y).
	\label{Eq:E4-1}
\end{equation}

Letting
\[
\lambda^{(0)}(x) = \min \Big\{ \lambda_1(x), \frac{\lambda_1(x)}{2(1+3|x|)} \Big[\frac{c_1(x)}{c_0}\Big]^{\frac{1}{n-2}}\Big\} \leq \lambda_1(x),
\]
we see that the conclusion of Lemma \ref{lem-1deg} follows from \eqref{Eq:E2-1}, \eqref{Eq:E3-2} and \eqref{Eq:E4-1}.
\end{proof}

Define, 
 for $0 < |x|\le  R_k/5$,
that 
\[
\bar \lambda_k(x)
=\sup\Big\{0<\mu\le \min(|x|,\frac {R_k}5)\  |\
(v_k)_{x,\lambda} \le v_k\
\mbox{in}\ B_{R_k}(0)\setminus (B_\lambda(x) \cup \{0\}), \forall 0<\lambda<\mu\Big\}.
\]

\bigskip

By Lemma \ref{lem-1deg},
$$\bar\lambda(x):=
\displaystyle{
\liminf_{k\to\infty}\bar\lambda_k(x)
}
\in [\lambda^{(0)}(x), |x|], \qquad x\in \RR^n \setminus \{0\}.
$$
Clearly,
\[
(v_*)_{x,\bar\lambda(x)} \leq v_* \text{ in } \RR^n \setminus (B_{\bar\lambda(x)}(x) \cup \{0\}) \text{ for all } x \in \RR^n \setminus \{0\}.
\]

We have a dichotomy:
\begin{align}
\text{ either } &\bar \lambda(x) = |x| \text{ for all }x \in \RR^n \setminus \{0\},
	\label{Eq:Casei}\\
\text{ or } &\bar\lambda(x_0) < |x_0| \text{ for some }x_0 \in \RR^n \setminus \{0\}.
	\label{Eq:Caseii}
\end{align}
In case \eqref{Eq:Casei}, we obtain that $v_*$ is radially symmetric about the origin thanks to Lemma \ref{lem-app1Sing}. To finish the proof, we assume in the rest of the argument that \eqref{Eq:Caseii} holds and derive a contradiction.

We first collect some properties of $\bar\lambda(x)$. We start with an analogue of Lemma \ref{lem-0.3}. By \eqref{Eq:vSuperhardeg}, let
 $$
\alpha:= \liminf_{|y|\to \infty}
|y|^{n-2}v_*(y)\in (0, \infty].
$$

\begin{lem}\  Under the hypotheses of Theorem \ref{proposition1-deg}, if $\bar \lambda(x)< |x|$ for some $x\in \RR^n \setminus \{0\}$,
then 
$$
\alpha= \lim_{|y|\to \infty}
|y|^{n-2}(v_*)_{x,\bar\lambda(x)}(y)
=\bar \lambda(x)^{n-2}v_*(x)
<\infty.
$$
\label{lem-0.3deg}
\end{lem}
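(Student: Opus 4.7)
My plan is to mimic Step 1 of the proof of Lemma~\ref{lem-0.3}, adapting the moving‑sphere argument to the present punctured setting. After passing to a subsequence so that $\bar\lambda_k(x)\to\bar\lambda(x)$, the hypothesis $\bar\lambda(x)<|x|$ forces $\bar\lambda_k(x)<\min(|x|,R_k/5)$ for all large $k$, so by the very definition of $\bar\lambda_k(x)$ as a supremum,
\[
(v_k)_{x,\bar\lambda_k(x)}\le v_k\quad\text{in } B_{R_k}(0)\setminus\bigl(B_{\bar\lambda_k(x)}(x)\cup\{0\}\bigr),
\]
while the analogous inequality fails in the corresponding domain for every $\lambda>\bar\lambda_k(x)$. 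By the conformal invariance \eqref{Eq:CIProp}, $(v_k)_{x,\bar\lambda_k(x)}$ also solves $f(\lambda(A^{\cdot}))=1$, and the ellipticity of $f$ yields a linear differential inequality for the non‑negative difference $w_k:=v_k-(v_k)_{x,\bar\lambda_k(x)}$. As in \cite[Lemma~4.5]{LiLi05}, the strong maximum principle in the interior together with the Hopf lemma on $\partial B_{\bar\lambda_k(x)}(x)$ (where $w_k\equiv 0$) produces a contact point for $w_k$ on the remaining boundary $\partial B_{R_k}(0)\cup\{0\}$.

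The new ingredient will be to exclude contact at the puncture. Since $(v_k)_{x,\bar\lambda_k(x)}$ extends continuously across $y=0$ to the finite value
\[
V_k:=\Big(\frac{\bar\lambda_k(x)}{|x|}\Big)^{n-2} v_k(x_k^*),\qquad x_k^*:=x\Big(1-\frac{\bar\lambda_k(x)^2}{|x|^2}\Big)\ne 0,
\]
the convergence $M_kv_k\to v_*$ in $C^0_{loc}(\RR^n\setminus\{0\})$ gives $M_kV_k\to V_\infty:=(\bar\lambda(x)/|x|)^{n-2}v_*(x_*)<\infty$. If contact at $y=0$ occurred along infinitely many $k$, then passing to the limit of $M_kw_k$ would force $\liminf_{y\to 0}v_*(y)=V_\infty$; combined with the limiting inequality $(v_*)_{x,\bar\lambda(x)}\le v_*$ on $\RR^n\setminus(B_{\bar\lambda(x)}(x)\cup\{0\})$ and the strong maximum principle applied on small punctured balls around $0$, this would propagate into $v_*\equiv (v_*)_{x,\bar\lambda(x)}$ on a punctured neighborhood of $0$. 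Invoking the Harnack‑type Lipschitz bound \eqref{Eq:v*Harnack} one can then globalize this Kelvin self‑symmetry, contradicting the strict inequality $\bar\lambda(x)<|x|$. Consequently, for all large $k$ the contact point $y_k$ must lie on $\partial B_{R_k}(0)$.

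Given such $y_k\in\partial B_{R_k}(0)$ with $v_k(y_k)=(v_k)_{x,\bar\lambda_k(x)}(y_k)$, a direct Kelvin computation using $|y_k|=R_k\to\infty$ (so that $x+\bar\lambda_k(x)^2(y_k-x)/|y_k-x|^2\to x$) gives
\[
|y_k|^{n-2}M_k(v_k)_{x,\bar\lambda_k(x)}(y_k)\;\longrightarrow\;\bar\lambda(x)^{n-2}v_*(x).
\]
Combining with the quantitative convergence \eqref{aadeg-0} yields $\alpha\le\bar\lambda(x)^{n-2}v_*(x)<\infty$. The reverse inequality $\alpha\ge\bar\lambda(x)^{n-2}v_*(x)$ is obtained by testing $(v_*)_{x,\bar\lambda(x)}\le v_*$ along any sequence $\hat y_i\to\infty$ realizing the $\liminf$ defining $\alpha$, verbatim as in the corresponding step of the proof of Lemma~\ref{lem-0.3}; the Kelvin identity $|y|^{n-2}(v_*)_{x,\bar\lambda(x)}(y)\to\bar\lambda(x)^{n-2}v_*(x)$ as $|y|\to\infty$ is then an immediate calculation.

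The hard part will be the exclusion of a contact at the isolated puncture $\{0\}$: the standard Hopf boundary lemma does not apply at an isolated point, so one must propagate a hypothetical contact outward by the strong maximum principle on punctured balls and convert it into a global Kelvin self‑symmetry, which is ruled out only after carefully combining the super‑harmonicity, strict positivity, and Lipschitz estimates on $v_*$ recorded in \eqref{Eq:v*Harnack}–\eqref{Eq:vSuperhardeg}.
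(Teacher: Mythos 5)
Your overall structure matches the paper's: pass to a subsequence so $\bar\lambda_k(x)\to\bar\lambda(x)$, use conformal invariance and maximum principles to produce a touching point for $v_k$ and $(v_k)_{x,\bar\lambda_k(x)}$ on $\partial B_{R_k}(0)$, then compute the limit and combine with the Kelvin inequality for the two-sided bound on $\alpha$. The difference — and the gap — is in the crucial step you yourself flag as "the hard part": ruling out degeneracy at the puncture.

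The paper disposes of the puncture in one stroke by invoking the strong maximum principle for solutions with isolated singularities, \cite[Theorem~1.6]{Li06-JFA}, which is designed exactly for this situation and directly yields a contact point on $\partial B_{R_k}(0)$. You instead try to reconstruct this from scratch, and the sketch you give does not close. First, the phrase ``contact at $y=0$'' is not meaningful as stated: $v_k$ is only $C^2$ on $B_{R_k}(0)\setminus\{0\}$, so $w_k=v_k-(v_k)_{x,\bar\lambda_k(x)}$ has no value at $0$. What can actually go wrong is that the touching points drift into the singularity as $\lambda\uparrow\bar\lambda_k(x)$; controlling that requires a genuine quantitative argument, not a passage to the limit of $M_kw_k$ at a point. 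Second, even granting a forced identity $v_*\equiv(v_*)_{x,\bar\lambda(x)}$ on a punctured neighborhood of $0$ (which itself would need justification beyond ``strong maximum principle on punctured balls,'' since an isolated singularity does not support a Hopf-type boundary point), the asserted contradiction with $\bar\lambda(x)<|x|$ is not established: $v_*$ is only known to be in $C^0(\RR^n\setminus\{0\})$, so it could a priori extend continuously across $0$, in which case Kelvin self-symmetry about $(x,\bar\lambda(x))$ with $\bar\lambda(x)<|x|$ is not on its face absurd. The Lipschitz bound \eqref{Eq:v*Harnack} does not by itself ``globalize'' the self-symmetry. In short: the outline is right, but the one step you identify as hard is left as an incomplete sketch, whereas the paper resolves it by citing a known theorem tailored to isolated singularities.
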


\begin{proof} We adapt Step 1 in the proof of Lemma \ref{lem-0.3}. Assume that $\bar \lambda(x)<|x|$ and (without loss of generality) that $\bar \lambda_k(x)\to \bar \lambda(x)$. Arguing as before but using the strong maximum principle for solutions with isolated singularities \cite[Theorem 1.6]{Li06-JFA} instead of the standard strong maximum principle, this leads to the existence of 
some $y_k\in \partial B_{R_k}(0)$ such that
$$
(v_k)_{x,\bar \lambda_k(x)}(y_k)=v_k(y_k).
$$

It follows 
that
$$
\lim_{k\to\infty} |y_k|^{n-2} M_k v_k(y_k)
= \lim_{k\to\infty} |y_k|^{n-2} (M_k v_k)_{x,\bar \lambda_k(x)}(y_k)
=(\bar \lambda(x))^{n-2}v_*(x).
$$
This implies, in view of \eqref{aadeg-0}, that
$$
\alpha\le \lim_{k\to\infty} |y_k|^{n-2} v_*(y_k)=\bar \lambda(x)^{n-2}v_*(x)
=\lim_{|y|\to \infty}
|y|^{n-2}(v_*)_{x,\bar\lambda(x)}(y)<\infty.
$$
On the other hand, as in the proof of Lemma \ref{lem-0.3}, we can use $(v_*)_{ x, \bar \lambda(x) }\le v_*$
in $\RR^n\setminus (B_{\bar \lambda(x) }(x) \cup \{0\})$ to show that 
$$
\alpha
\ge \bar \lambda(x)^{n-2}v_*(x).
$$
The conclusion is readily seen.
\end{proof}

\begin{lem}\label{Lem:lamCont}
Under the hypotheses of Theorem \ref{proposition1-deg}, if $\bar\lambda(x_0) < |x_0|$ for some $x_0 \in \RR^n \setminus \{0\}$, then
\[
\limsup_{x \rightarrow x_0} \bar\lambda(x) \leq \bar\lambda(x_0).
\]
\end{lem}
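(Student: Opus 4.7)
I would argue by contradiction. Suppose $\limsup_{x\to x_0}\bar\lambda(x)>\bar\lambda(x_0)$ and extract a sequence $x_j\to x_0$ with $\bar\lambda(x_j)\to\lambda^{*}$ for some $\lambda^{*}\in(\bar\lambda(x_0),\infty)$. Because $\bar\lambda(x)\le|x|$, we automatically have $\lambda^{*}\le|x_0|$. The plan is to transport the moving-sphere inequality available at each $x_j$ to the centre $x_0$ with radius $\lambda^{*}$, and then read off a contradiction from the asymptotic identity supplied by Lemma \ref{lem-0.3deg}.

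\textbf{Step 1 (pass the moving-sphere inequality to the limit).} Fix $\mu\in(0,\lambda^{*})$. For $j$ large, $\mu<\bar\lambda(x_j)$, so by the definition of $\bar\lambda(x_j)$ as $\liminf_k\bar\lambda_k(x_j)$ there is, for each such $j$, a sequence of $k$'s tending to infinity with $(v_k)_{x_j,\mu}\le v_k$ on $B_{R_k}(0)\setminus(B_\mu(x_j)\cup\{0\})$. Multiplying by $M_k$ and using $M_kv_k\to v_*$ in $C^0_{loc}(\RR^n\setminus\{0\})$ yields $(v_*)_{x_j,\mu}\le v_*$ on $\RR^n\setminus(B_\mu(x_j)\cup\{0\})$. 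Sending $j\to\infty$ and using continuity of $v_*$ at $x_0\ne 0$ together with continuity of $x\mapsto(v_*)_{x,\mu}(y)$ on the relevant set gives
\[
(v_*)_{x_0,\mu}(y)\le v_*(y)\qquad\text{for every } y\in\RR^n\setminus\bigl(\bar B_\mu(x_0)\cup\{0\}\bigr).
\]

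\textbf{Step 2 (asymptotics at infinity and conclusion).} Multiplying by $|y|^{n-2}$ and sending $|y|\to\infty$, the left-hand side tends to $\mu^{n-2}v_*(x_0)$ (since $|y-x_0|/|y|\to 1$ and $x_0+\mu^{2}(y-x_0)/|y-x_0|^{2}\to x_0$, using continuity of $v_*$ at $x_0$), while $\liminf_{|y|\to\infty}|y|^{n-2}v_*(y)=\alpha$. Hence $\mu^{n-2}v_*(x_0)\le\alpha$ for every $\mu<\lambda^{*}$; letting $\mu\nearrow\lambda^{*}$ gives $(\lambda^{*})^{n-2}v_*(x_0)\le\alpha$. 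Because $\bar\lambda(x_0)<|x_0|$, Lemma \ref{lem-0.3deg} identifies $\alpha=\bar\lambda(x_0)^{n-2}v_*(x_0)$, and since $v_*(x_0)>0$ (we are working in the case $v_*>0$ on $\RR^n\setminus\{0\}$), this forces $\lambda^{*}\le\bar\lambda(x_0)$, contradicting the choice of $\lambda^{*}$.

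\textbf{Main obstacle.} The delicate point is Step 1: transferring the inequality first from $v_k$ to $v_*$ and then from $x_j$ to $x_0$ requires that the Kelvin image $x+\mu^{2}(y-x)/|y-x|^{2}$ stay in a compact subset of $\RR^n\setminus\{0\}$ on which one has the uniform convergence $M_kv_k\to v_*$ and continuity of $v_*$; the strict inequality $\mu<\lambda^{*}\le|x_0|$ and the exclusion $y\ne 0$ are precisely what guarantee this. The borderline case $\lambda^{*}=|x_0|$ presents no additional issue since Step 2 only needs the inequality for $|y|$ large, where the Kelvin image is close to $x_0\ne 0$, well away from the singularity.
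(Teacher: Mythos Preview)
Your argument is correct and takes a genuinely different route from the paper's.

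The paper works at the level of the approximating functions $v_k$: passing to a subsequence with $\bar\lambda_k(x_0)\to\bar\lambda(x_0)$, it locates (via the strong maximum principle, as in Lemma~\ref{lem-0.3deg}) a touching point $y_k\in\partial B_{R_k}(0)$ with $(v_k)_{x_0,\bar\lambda_k(x_0)}(y_k)=v_k(y_k)$, and then, using the modulus of continuity of $v_*$ near $x_0$ together with the decay bound \eqref{aadeg-0Ex}, shows that perturbing the center to any nearby $x$ and enlarging the radius by $\delta$ still forces $(v_k)_{x,\bar\lambda_k(x_0)+\delta}(y_k)>v_k(y_k)$, hence $\bar\lambda_k(x)\le\bar\lambda_k(x_0)+\delta$ for $|x-x_0|$ small and $k$ large. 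In other words, the paper obtains upper semicontinuity uniformly in $k$ along a subsequence.

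You instead push everything to the limit $v_*$ and invoke the asymptotic identity of Lemma~\ref{lem-0.3deg} directly: from $(v_*)_{x_0,\mu}\le v_*$ for every $\mu<\lambda^{*}$ you read off $\mu^{n-2}v_*(x_0)\le\alpha=\bar\lambda(x_0)^{n-2}v_*(x_0)$, and conclude. This is cleaner and exploits Lemma~\ref{lem-0.3deg} more fully; the paper's approach, by contrast, yields a slightly stronger $k$-uniform statement but at the cost of a more hands-on computation.

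One streamlining: you do not actually need to pass the moving-sphere inequality from $x_j$ to $x_0$. The inequality $(v_*)_{x_j,\mu}\le v_*$ for $\mu<\bar\lambda(x_j)$ (already essentially recorded in the paper just before Lemma~\ref{lem-0.3deg}) gives $\bar\lambda(x_j)^{n-2}v_*(x_j)\le\alpha$ directly; sending $j\to\infty$ yields $(\lambda^{*})^{n-2}v_*(x_0)\le\alpha=\bar\lambda(x_0)^{n-2}v_*(x_0)$ and the same contradiction, avoiding the intermediate limit in $j$ inside Step~1.
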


\begin{proof} Along a subsequence, we have $\bar\lambda_k(x_0) \rightarrow \bar\lambda(x_0)$.

As in the proof of Lemma \ref{lem-0.3deg}, there exists $y_k\in \partial B_{R_k}(0)$ such that
\begin{equation}
(v_k)_{x_0,\bar \lambda_k(x_0)}(y_k)=v_k(y_k).
	\label{Eq:F2-1}
\end{equation}

We know that
\[
\eta_k := \sup_{B_{|x_0|/2}(x_0)} |M_k v_k - v_*| \rightarrow 0 \text{ as } k \rightarrow \infty.
\]
Let $m$ denote the modulus of continuity of $v_*$ in $B_{|x_0|/2}(x_0)$, i.e.
\[
m(r) = \sup \Big\{ |v_*(x) - v_*(y)|: x, y \in B_{|x_0|/2}(x_0), |x - y| < r\Big\}.
\]

In the computation below, we use $o(1)$ to denote quantities such that $$\lim_{k \rightarrow \infty} o(1) = 0.$$

Fix some $\delta > 0$ and consider $|x - x_0| < |x_0|/2$. We note that
\[
\Big|\Big(x + \frac{(\bar\lambda_k(x_0) + \delta)^2(y_k - x)}{|y_k - x|^2}\Big) - \Big(x_0 + \frac{\bar\lambda_k(x_0)^2(y_k - x_0)}{|y_k - x_0|^2}\Big)\Big| = |x - x_0| + o(1).
\]
Thus, 
\[
\Big|v_*\Big(x + \frac{(\bar\lambda_k(x_0) + \delta)^2(y_k - x)}{|y_k - x|^2}\Big) - v_*\Big(x_0 + \frac{\bar\lambda_k(x_0)^2(y_k - x_0)}{|y_k - x_0|^2}\Big)\Big| \leq m(|x - x_0| + o(1)).
\]
It follows that
\begin{align*}
M_k (v_k)_{x, \bar\lambda_k(x_0) + \delta}(y_k)
	&= \Big(\frac{\bar\lambda_k(x_0) + \delta}{|y_k - x|}\Big)^{n-2} (M_kv_k)\Big(x + \frac{(\bar\lambda_k(x_0) + \delta)^2(y_k - x)}{|y_k - x|^2}\Big)\\
	&\geq \Big(\frac{\bar\lambda_k(x_0) + \delta}{|y_k - x|}\Big)^{n-2} \Big[(M_kv_k)\Big(x_0 + \frac{\bar\lambda_k(x_0) ^2(y_k - x_0)}{|y_k - x_0|^2}\Big)\\
		&\qquad\qquad\qquad -2\eta_k - m(|x - x_0| + o(1))\Big]\\
	&= \Big(1 + \frac{\delta}{\bar\lambda_k(x_0)}\Big)^{n-2} M_k (v_k)_{x_0, \bar\lambda_k(x_0)}(y_k)\\
		&\qquad\qquad - \Big(\frac{\bar\lambda_k(x_0) + \delta}{|y_k - x|}\Big)^{n-2}[2\eta_k + m(|x - x_0| + o(1))].
\end{align*}
Recalling \eqref{Eq:F2-1}, we arrive at
\begin{align*}
M_k (v_k)_{x, \bar\lambda_k(x_0) + \delta}(y_k)
	&\geq \Big(1 + \frac{\delta}{\bar\lambda_k(x_0)}\Big)^{n-2} M_k v_k(y_k)\\
		&\qquad\qquad - \Big(\frac{\bar\lambda_k(x_0) + \delta}{|y_k|}\Big)^{n-2}[o(1)+ m(|x - x_0| + o(1))].
\end{align*}
Thus, in view of \eqref{aadeg-0Ex}, we can find small $\bar\epsilon > 0$ depending only on $\delta$, $c$, $\bar\lambda(x_0)$ and the function $m(\cdot)$ such that, for all $|x - x_0| < \bar\epsilon$ and for large $k$,
\begin{align*}
M_k (v_k)_{x, \bar\lambda_k(x_0) + \delta}(y_k)
	&\geq \Big(1 + \frac{\delta}{4\bar\lambda_k(x_0)}\Big)^{n-2} M_k v_k(y_k).
\end{align*}
This implies that (cf. \eqref{Eq:F2-1}), that
\[
\bar\lambda_k(x) \leq \bar \lambda_k(x_0) + \delta \text{ for all } |x - x_0| < \bar\epsilon \text{ and large } k.
\]
The conclusion follows.
\end{proof}

We now return to drawing a contradiction from \eqref{Eq:Caseii}. By Lemma \ref{Lem:lamCont}, we infer from \eqref{Eq:Caseii} that there exists some $r_0 > 0$ such that $\bar\lambda(x) < |x|$ for all $x \in B_{r_0}(x_0)$. We can then argue as in the proof of Theorem \ref{proposition1}, using Lemma \ref{lem-0.3deg} instead of Lemma \ref{lem-0.3} to obtain
\[
v_*(x) = \Big(\frac{a}{1 +  b^2|x - \bar x|^2}\Big)^{\frac{n-2}{2}} \qquad x \in B_{r_0}(x_0).
\]
for some $ \bar x \in \RR^n$ and some $a, b > 0$. For small $\delta > 0$, let
\[
v_*^\delta(x) := v_*(x) + \delta |x - x_0|^2.
\]
Since $M_k v_* \rightarrow v_*$ in $C^{0}(\bar B_\delta(x_0))$, there exists $\beta_k \rightarrow 0$ and $x_k \rightarrow x_0$ such that the function $\xi_{k,\delta} := v_*^\delta + \beta_k$ satisfies
\[
(M_k v_k - \xi_{k,\delta})(x_k) = 0 \text{ and } M_k v_k - \xi_{k,\delta} \leq 0 \text{ near } x_k.
\]
It follows that
\[
A^{v_k}(x_k) \geq A^{\frac{1}{M_k} \xi_{k,\delta}}(x_k) = M_k^{\frac{4}{n-2}} A^{\xi_{k,\delta}}(x_k) .
\]
On the other hand, by hypothesis, there is some $\lambda_* \in \Gamma$ such that $f(\lambda_*) = 1$ (e.g. $\lambda_* = \lambda(A^{v_1}(0))$). By \eqref{0304weak}, we can find $\hat\lambda_* \in \Gamma$ such that $f(\hat \lambda_*) > 1$. As $M_k \rightarrow \infty$ and $A^{\xi_{k,\delta}}(x_k) = 2b^{2}a^{-2}I + O(\delta)$, we can find $k$ sufficiently large such that $M_k^{\frac{4}{n-2}} A^{\xi_{k,\delta}}(x_k) > \diag(\hat\lambda_*)$. We are thus led to
\[
A^{v_k}(x_k) > \diag(\hat\lambda_*).
\]
As $f(\lambda(A^{v_k})) = 1$ and $f(\hat \lambda_*) > 1$, the above contradicts \eqref{02weakX} and \eqref{0304weak}.
\end{proof}

\section{Local gradient estimates} \label{sec:ThmB}

In this section, we adapt the argument in \cite{Li09-CPAM} to prove Theorem \ref{TheoremB}.

For a locally Lipschitz function $w$ in $B_2(0)$, $0 < \alpha < 1$, $x \in B_2(0)$  and $0 < \delta < 2 - |x|$, define
\begin{align*}
[w]_{\alpha,\delta}(x) 
	&= \sup_{0 < |y - x| < \delta} \frac{|w(y) - w(x)|}{|y - x|^\alpha},\\
\delta(w,x,\alpha) 
	&= \left\{\begin{array}{ll}
	\infty & \text{ if } (2 - |x|)^\alpha\,[w]_{\alpha,2 - |x|}(x) < 1,\\
	\mu & \text{ where } 0 < \mu \leq 2 - |x| \text{ and } \mu^\alpha\,[w]_{\alpha,\mu}(x) = 1\\
		& \text{ if } (2 - |x|)^\alpha\,[w]_{\alpha,2 - |x|}(x) \geq 1.
\end{array}\right.
\end{align*}
Note that $\delta(w,x,\alpha)$ is well defined as $[w]_{\alpha,\delta}(x)$ is continuous and non-decreasing in $\delta$. The object $\delta(w,x,\alpha)$ was introduced in \cite{Li09-CPAM}. Its reciprocal $\delta(w,x,\alpha)^{-1}$ plays a role similar to that of $|\nabla w(x)|$ in performing a rescaling argument for a sequence of functions blowing up in $C^\alpha$-norms. For example, when $\delta = \delta(w,x,\alpha) < \infty$, the rescaled function $\hat w(y) := w(x + \delta y) - w(x)$ satisfies
\[
\hat w(0) = 0 \text{ and } [\hat w]_{\alpha,1}(0) = \delta^\alpha[\hat w]_{\alpha,\delta}(x) = 1.
\]

\begin{thm}
Theorem \ref{TheoremB} holds if we relax \eqref{01}-\eqref{04} to \eqref{01weak}-\eqref{Eq:ActaSIdiag}.
\end{thm}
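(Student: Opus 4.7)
My plan is a blow-up argument by contradiction, adapting the proof of \cite[Theorem 1.10]{Li09-CPAM} to the non-homogeneous setting by substituting Theorem \ref{proposition1-deg} for the homogeneous-case Liouville theorem used there. Specifically, I would assume the conclusion fails and produce $v_k \in C^2(B_2(0))$ with $0 < v_k \leq b$, $f(\lambda(A^{v_k})) = 1$, and $x_k \in B_1(0)$ such that $|\nabla \ln v_k(x_k)| \to \infty$.

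\smallskip
\noindent\textbf{Selection and rescaling.} Fixing $\alpha \in (0,1)$, I would observe that the blow-up of $|\nabla \ln v_k|$ forces $\delta(\ln v_k, \cdot, \alpha)$ to be small somewhere on $B_1(0)$. A selection argument in the spirit of \cite{Li09-CPAM} would then produce $\bar x_k \in B_{7/4}(0)$ and scales $\delta_k := \delta(\ln v_k, \bar x_k, \alpha) \to 0$ with the near-minimality
\[ \delta(\ln v_k, x, \alpha) \geq \tfrac{1}{2}\delta_k \quad \forall\, x \in B_{R_k \delta_k}(\bar x_k), \]
for some $R_k \to \infty$ with $R_k \delta_k \to 0$. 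Using the conformal invariance of the equation under Möbius transformations, I would then rescale conformally, setting $\tilde v_k(y) := \delta_k^{(n-2)/2} v_k(\bar x_k + \delta_k y)$ for $y \in B_{R_k}(0)$; this $\tilde v_k$ still satisfies $f(\lambda(A^{\tilde v_k})) = 1$. Because $v_k \leq b$ and $\delta_k \to 0$, one has $\tilde v_k(0) \to 0$, so $M_k := \tilde v_k(0)^{-1} \to \infty$, and the Harnack-normalized object $M_k \tilde v_k(y) = v_k(\bar x_k + \delta_k y)/v_k(\bar x_k)$ satisfies $(M_k \tilde v_k)(0) = 1$ as well as $[\ln(M_k \tilde v_k)]_{\alpha,1}(0) = 1$ by the choice of $\delta_k$.

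\smallskip
\noindent\textbf{Passage to the limit.} The next step is to show $M_k \tilde v_k$ has uniform local bounds. The near-minimality above, via chaining on balls of radius $\tfrac{1}{2}\delta_k$, would give a uniform $C^0$ oscillation bound for $\ln \tilde v_k$ on every fixed ball $B_R(0)$ of the rescaled domain, for large $k$. Theorem \ref{SoftGEst}, applied to $\tilde v_k$ (which solves the equation), then delivers uniform Lipschitz bounds for $\ln \tilde v_k$ on compact subsets of $\RR^n$; since $\nabla \ln(M_k \tilde v_k) = \nabla \ln \tilde v_k$, the same bounds transfer to $M_k \tilde v_k$. Combined with $(M_k \tilde v_k)(0) = 1$, this yields uniform $C^0_{loc}$ bounds; along a subsequence, $M_k \tilde v_k \to v_*$ in $C^0_{loc}(\RR^n)$ with $v_*(0) = 1$ and $[\ln v_*]_{\alpha,1}(0) = 1 > 0$. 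The final (``in particular'') clause of Theorem \ref{proposition1-deg}, applied to $\tilde v_k$ on domains exhausting $\RR^n$ with factors $M_k \to \infty$, then forces $v_*$ to be constant, contradicting $[\ln v_*]_{\alpha,1}(0) > 0$.

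\smallskip
\noindent\textbf{Main obstacle.} The hardest point will be the passage to the limit: since $f$ is not assumed homogeneous, the normalized sequence $M_k \tilde v_k$ does not satisfy the original equation, so Theorem \ref{SoftGEst} can only be applied to $\tilde v_k$ itself. The selection of $(\bar x_k, \delta_k)$ must therefore be calibrated so that $\tilde v_k$, and not $M_k \tilde v_k$, carries the oscillation hypothesis required by Theorem \ref{SoftGEst} on arbitrarily large balls. This is precisely the step at which the non-homogeneity funnels the limit into the degenerate regime and necessitates the replacement of the homogeneous-case Liouville theorem used in \cite{Li09-CPAM} by Theorem \ref{proposition1-deg}.
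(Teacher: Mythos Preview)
Your proposal is correct and follows essentially the same route as the paper: a contradiction argument using the $\delta(\cdot,\cdot,\alpha)$ selection from \cite{Li09-CPAM}, the conformal rescaling $\tilde v_k$ (to which Theorem~\ref{SoftGEst} is applied, since the Harnack-normalized $M_k\tilde v_k$ does not solve the equation when $f$ is inhomogeneous), passage to a limit $v_*$ with nontrivial $C^{0,\alpha}$ seminorm, and Theorem~\ref{proposition1-deg} to force $v_*$ constant. The paper organizes this slightly differently---it first proves a uniform $C^{0,\alpha}$ bound on $\ln v$ by contradiction and then deduces the gradient bound by a final application of Theorem~\ref{SoftGEst}---whereas you start directly from gradient blow-up; but your reduction ``gradient blow-up forces $\delta$ small'' is itself justified via Theorem~\ref{SoftGEst}, so the two presentations are equivalent.
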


\begin{proof} By the conformal invariance \eqref{Eq:CIProp}, it suffices to show bound $|\nabla \ln v|$ in $B_{1/4}(0)$. 

We first claim that
\begin{equation}
\sup_{x \neq y \in B_{1/2}(0)} \frac{|\ln v(x) - \ln v(y)|}{|x - y|^\alpha} \leq 
C(\Gamma, \alpha) \text{ for any } 0 < \alpha < 1
	.\label{11M16-1}
\end{equation}

Assume otherwise that \eqref{11M16-1} fails for some $0 < \alpha < 1$. Then there exist $0 < v_i \in C^2(B_2(0))$ such that $f(\lambda(A^{v_i})) = 1$ and $v_i \leq b$ in $B_2(0)$ but
\[
\sup_{x \neq y \in B_{1/2}(0)} \frac{|\ln v_i(x) - \ln v_i(y)|}{|x - y|^\alpha} \rightarrow \infty.
\]
This implies that, for any fixed $0 < r < 1/2$, 
\[
\sup_{x \in B_{1/2}(0)} [\ln v_i]_{\alpha,r}(x) \rightarrow \infty \text{ and } \inf_{x \in B_{1/2}(0)} \delta(\ln v_i, x, \alpha) \rightarrow 0.
\]
Therefore, there exists $x_i \in B_{1}(0)$, 
\[
\frac{1 - |x_i|}{\delta(\ln v_i, x_i, \alpha)} = \sup_{x \in B_{1}(0)} \frac{1 - |x|}{\delta(\ln v_i, x,\alpha)} \rightarrow \infty.
\]
Let $\sigma_i = \frac{1 - |x_i|}{2}$ and $\epsilon_i = \delta(\ln v_i, x_i, \alpha)$. Then
\begin{equation}
\frac{\sigma_i}{\epsilon_i} \rightarrow \infty, \epsilon_i \rightarrow 0,  \text{ and } \epsilon_i \leq 2\,\delta(\ln v_i,z,\alpha) \text{ for any } |z - x_i| \leq \sigma_i
	.\label{11M16-1x}
\end{equation}

We now define
\[
\hat v_i(y) = \frac{1}{v_i(x_i)}\,v_i(x_i + \epsilon_i\,y) \text{ for } |y| \leq \frac{\sigma_i}{\epsilon_i}.
\]
Then
\begin{equation}
[\ln \hat v_i]_{\alpha,1}(0) = \epsilon_i^\alpha\,[\ln v_i]_{\alpha,\epsilon_i}(x_i) = 1
	.\label{11M16-2}
\end{equation}
Also, by \eqref{11M16-1x}, for any fixed $\beta > 1$ and $|y| < \beta$, there holds
\begin{align}
[\ln \hat v_i]_{\alpha,1}(y) 
	&= \epsilon_i^\alpha\,[\ln v_i]_{\alpha,\epsilon_i}(x_i + \epsilon_i\,y)\nonumber\\
	&\leq 2^{-\alpha}\,\epsilon_i^\alpha \Big\{ \sup_{|z - (x_i + \epsilon_i y)| \leq \epsilon_i} [\ln v_i]_{\alpha,\epsilon_i/4}(z) + [\ln v_i]_{\alpha,\epsilon_i/4}(x_i + \epsilon_i\,y)\Big\}\nonumber\\
	&\leq  \sup_{|z - (x_i + \epsilon_i y)| \leq \epsilon_i} \delta(\ln v_i,z,\alpha)^\alpha\,[\ln v_i]_{\alpha,\delta(\ln v_i,z,\alpha)}(z)\nonumber\\
		&\qquad\qquad + \delta(\ln v_i,x_i + \epsilon_i\,y,\alpha)^\alpha\,[\ln v_i]_{\alpha,\delta(\ln v_i,x_i + \epsilon_i\,y,\alpha)}(x_i + \epsilon_i\,y)\nonumber\\
	&= 2
	\label{11M16-3}
\end{align}
for all sufficiently large $i$. Since $\hat v_i(0) = 1$ by definition, we deduce from \eqref{11M16-2} and \eqref{11M16-3} that
\begin{equation}
\frac{1}{C(\beta)} \leq \hat v_i(y) \leq C(\beta) \text{ for } |y| \leq \beta \text{ and all sufficiently large $i$}
	.\label{21F11-4}
\end{equation}
We can now apply Theorem \ref{SoftGEst} to obtain
\begin{equation}
|\nabla \ln \hat v_i| \leq C(\beta) \text{ in } B_{\beta/2}(0) \text{ for all sufficiently large $i$}.
	\label{21F11-7}
\end{equation}
Passing to a subsequence and recalling \eqref{11M16-1x} and \eqref{21F11-4}, we see that $\hat v_i$ converges in $C^{0,\alpha'}$ ($\alpha < \alpha' < 1$) on compact subsets of $\RR^n$ to some positive, locally Lipschitz function $v_*$.

On the other hand, if we define
\[
\bar v_i(y) = \epsilon_i^{\frac{n-2}{2}}\,v_i(x_i + \epsilon_i\,y) \text{ for } |y| \leq \frac{\sigma_i}{\epsilon_i},
\]
then by the conformal invariance \eqref{Eq:CIProp}, we have
\[
f(\lambda(A^{\bar v_i})) = 1 \text{ in } B_{\sigma_i/\epsilon_i}(0).
\]
Since $\frac{\sigma_i}{\epsilon_i} \rightarrow \infty$, $\hat v_i = M_i\,\bar v_i$ where $M_i = v_i(x_i)^{-1} \epsilon_i^{-\frac{n-2}{2}} \rightarrow \infty$ (thanks to the bound $v_i \leq b$), we then conclude from Theorem \ref{proposition1-deg} that $v_*$ is constant, namely
\[
v_* \equiv v_*(0) = \lim_{i \rightarrow \infty} \hat v_i(0) = 1.
\]
This contradicts \eqref{11M16-2}, in view of \eqref{21F11-7} and the convergence of $\hat v_i$ to $v_*$. We have proved \eqref{11M16-1}.

From \eqref{11M16-1}, we can find some universal constant $C > 1$ such that
\[
\frac{u(0)}{C} \leq u \leq C\,u(0) \text{ in } B_{1/2}(0).
\]
Applying Theorem \ref{SoftGEst} again we obtain the required gradient estimate in $B_{1/4}(0)$.
\end{proof}

\section{Fine blow-up analysis} \label{sec:T4X}

\subsection{A quantitative centered Liouville-type result}

In this subsection, we establish: 
\begin{prop}\label{prop-C16-1new}
Let $(f, \Gamma)$  satisfy 
\eqref{01weak}-\eqref{0304weak}, \eqref{Eq:ActaSIdiag}-\eqref{02weakY}, \eqref{FU-1a} and the normalization condition \eqref{F1}.
Assume that  for a sequence $R_k\to\infty$,  
 $0 < v_k \in C^2(B_{R_k})$ satisfy
\begin{equation}
f(\lambda(A^{v_k}))(y)=1,\ \ 0<v_k(y)\le v_k(0)=1,
\quad |y|\le R_k.
\label{ab1new}
\end{equation}
  Then for every 
$\epsilon>0$, there exists a constant
$\daste > 0$, depending only on $(f,\Gamma)$ and $\epsilon$, 
such that, for all sufficiently large $k$,
\begin{equation}
|v_k(y)-U(y)|\le 2 \epsilon
U(y),\qquad\forall\
|y|\le \daste R_k.
\end{equation}
\end{prop}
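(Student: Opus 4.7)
The plan is to argue by contradiction, combining the non-quantitative Liouville theorem (Theorem~\ref{proposition1}) with the degenerate version (Theorem~\ref{proposition1-deg}) via a rescaling at the maximal ``good scale''. I would first show that $v_k\to U$ in $C^0_{loc}(\RR^n)$ along the full sequence: the uniform bound $v_k\le 1$ and Theorem~\ref{TheoremB} give locally uniform gradient bounds, so any subsequence has a $C^0_{loc}$ limit $v$ with $v(0)=1$ and $v\le 1$; by Theorem~\ref{proposition1} this $v$ is either constant (excluded by \eqref{Eq:ActaSIdiag}) or of the form \eqref{vform}, and the constraints $v(0)=1=\max v$ together with $f(2,\ldots,2)=1$ force $\bar x=0$ and $a=b=1$, i.e.\ $v\equiv U$.

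Suppose the quantitative conclusion fails. Then for some $\epsilon_0>0$, after passing to a subsequence, there exist $\delta_k\to 0$ and $y_k$ with $|y_k|\le\delta_k R_k$ and $|v_k(y_k)-U(y_k)|>2\epsilon_0 U(y_k)$. Set
\[
\mu_k:=\sup\{r>0:|v_k-U|\le 2\epsilon_0 U\text{ on }\bar B_r(0)\};
\]
the qualitative convergence gives $\mu_k\to\infty$, while $\mu_k\le|y_k|$ forces $\mu_k/R_k\to 0$. I would then perform the conformal rescaling $\tilde v_k(z):=\mu_k^{(n-2)/2}v_k(\mu_k z)$, which solves $f(\lambda(A^{\tilde v_k}))=1$ on the expanding ball $B_{R_k/\mu_k}(0)$, and set $w_k(z):=\mu_k^{(n-2)/2}\tilde v_k(z)=\mu_k^{n-2}v_k(\mu_k z)$. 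The rescaled bubble $\mu_k^{n-2}U(\mu_k z)=(\mu_k^{-2}+|z|^2)^{-(n-2)/2}$ converges to $|z|^{-(n-2)}$ in $C^0_{loc}(\RR^n\setminus\{0\})$, and on $\bar B_1\setminus\{0\}$ the good-region bound reads $|w_k(z)-(\mu_k^{-2}+|z|^2)^{-(n-2)/2}|\le 2\epsilon_0(\mu_k^{-2}+|z|^2)^{-(n-2)/2}$.

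The key analytic task is to prove that $w_k$ is locally uniformly bounded on $\RR^n\setminus\{0\}$, so that a subsequence converges in $C^0_{loc}(\RR^n\setminus\{0\})$ to some $v_*$ to which Theorem~\ref{proposition1-deg} can be applied. Lower bounds follow from the method of moving spheres at the origin: since $v_k\to U$ in $C^0_{loc}$ and the critical Kelvin radius of $U$ at $0$ equals $1$, one gets $\bar\lambda_k(0)\to 1$ and hence $v_k(y)\gtrsim(1+|y|)^{-(n-2)}$; matching upper bounds $v_k(y)\lesssim(1+|y|)^{-(n-2)}$ are extracted from the strict inequality $U_{0,\lambda}>U$ on $\{|y|>\lambda\}$ for $\lambda>1$, combined with the Harnack estimate Theorem~\ref{SoftGEst} on dyadic annuli. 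Theorem~\ref{proposition1-deg} then forces $v_*$ to be radially symmetric about the origin; a direct radial ODE computation (of the same kind that yields $A^{|x|^{-(n-2)}}\equiv 0$) identifies the only positive, non-constant, decreasing radial viscosity solutions of $\lambda(A^{v_*})\in\partial\Gamma$ as $v_*(z)=c|z|^{-(n-2)}$, $c>0$. The good-region bound forces $c\in[1-2\epsilon_0,1+2\epsilon_0]$ in the limit, while the definition of $\mu_k$ as a supremum, applied to test points with $|y|$ slightly larger than $\mu_k$, gives $|c-1|\ge 2\epsilon_0$. The principal obstacle is upgrading this equality case into a strict contradiction; I expect to resolve it via a Hopf-type improvement at the critical Kelvin radius or by choosing a test point strictly inside the bad region so that the limiting $|c-1|$ is bounded away from $2\epsilon_0$.
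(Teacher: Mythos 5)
The overall strategy — prove $v_k\to U$ locally, rescale at the bad scale, pass to a radial limit via Theorem~\ref{proposition1-deg}, and derive a contradiction — is the same as the paper's, but you have correctly identified (and not closed) a real gap at the final step. Let me pinpoint what is missing.

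The paper's key ingredient is the estimate of Lemma~\ref{lem0.1}:
\[
\min_{|y|=r}v_k(y)\le(1+\epsilon)U(r)\qquad\text{for all }0<r<R_k/5,\ k\text{ large},
\]
proved by a moving-spheres argument centered at the origin: if the minimum over some sphere $\partial B_{r_i}$ exceeded $(1+\epsilon)U(r_i)$, the moving-sphere radius $\bar\lambda_k(0)$ could be pushed past $1+\delta$, contradicting the strict inequality $U_{1+\delta}>U$ on $\{1+\delta<|y|\le 2\}$. This is a \emph{sharp} one-sided bound valid out to scale $R_k/5$, and it is the missing piece. The paper then (i) establishes the matching lower bound $v_k\ge(1-\epsilon)U$ first (Lemma~\ref{lem-energy1}), so the quantitative failure can only be on the high side, and (ii) chooses the bad point $y_k$ to be the \emph{maximizer} of $v_k$ on the sphere $|y|=\delta_kR_k$ where the estimate fails. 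After the rescaling $\hat v_k(z)=|y_k|^{n-2}v_k(|y_k|z)$, one gets in the limit
\[
\min_{|z|=1}\hat v^*(z)\le 1+\epsilon<1+2\epsilon\le\max_{|z|=1}\hat v^*(z),
\]
which directly contradicts the radial symmetry of $\hat v^*$ furnished by Theorem~\ref{proposition1-deg} — no classification of $\hat v^*$ is needed.

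Your alternative ideas for closing the contradiction do not work. First, the limit $v_*$ need not be a viscosity solution of the degenerate equation $\lambda(A^{v_*})\in\partial\Gamma$ when $f$ is not homogeneous (this is exactly the caveat in the remark following Theorem~\ref{proposition1-deg}); Theorem~\ref{proposition1-deg} gives radial symmetry but nothing more, so no ``radial ODE classification'' applies, and even for true solutions the radial solution set is far richer than $\{c|z|^{-(n-2)}\}$. Second, the supremum definition of your $\mu_k$ gives at best equality $|c-1|=2\epsilon_0$ in the limit, and the Hopf-type improvement you invoke would need to act on a limiting object that is not a genuine solution; there is no strong maximum principle available to quote. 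Third, your route to the upper bound $v_k\lesssim(1+|y|)^{2-n}$ ``via Harnack on dyadic annuli'' presupposes the oscillation bound required by Theorem~\ref{SoftGEst}; the paper supplies this via a small-energy lemma (Lemma~\ref{lem-smallenergy}/\ref{lem:Ressmallenergy}) resting on the energy estimate of Lemma~\ref{lem-energy1}, which in turn uses \eqref{FU-1a} in an essential way. I would recommend restructuring the argument around the min-estimate of Lemma~\ref{lem0.1} and the choice of $y_k$ as a spherical maximizer; with those two ingredients, the radial symmetry of the limit already finishes the proof.
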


Recall that $U = (1 + |x|^2)^{-\frac{n-2}{2}}$, $A^U \equiv 2I$ and $f(\lambda(A^U)) = 1$ on $\RR^n$.

Proposition \ref{prop-C16-1new} is equivalent to the following proposition.

\begin{prop}\label{prop:BbUp1}
Let $(f,\Gamma)$ satisfy \eqref{01weak}-\eqref{0304weak}, \eqref{Eq:ActaSIdiag}-\eqref{02weakY}, \eqref{FU-1a} and the normalization condition \eqref{F1}. For any $\epsilon > 0$ there exist $\daste, \Caste > 0$ depending only on $(f,\Gamma)$ and $\epsilon$  such that if $0 < u \in C^2(B_{R}(0))$, $R > 0$, satisfies
\[
f(\lambda(A^u)) = 1 \text{ in } B_{R}(0) \text{ and } u(0) = \sup_{B_R(0)} u \geq \Caste\, R^{-\frac{n-2}{2}},
\]
then
\[
|u(x) - U^{0,u(0)}(x)| \leq 2\epsilon U^{0,u(0)}(x) \text{ for all } x \in B_{\daste R}(0).
\]
\end{prop}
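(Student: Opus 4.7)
The two propositions are equivalent via the standard conformal rescaling $v(y) = u(0)^{-1} u(u(0)^{-2/(n-2)} y)$, so it suffices to prove Proposition \ref{prop-C16-1new}. I argue by contradiction. Suppose the conclusion fails for some $\epsilon > 0$: then along sequences $R_k \to \infty$ and $\eta_k \to 0$ there exist solutions $v_k \in C^2(B_{R_k}(0))$ with $0 < v_k \leq v_k(0) = 1$ and $f(\lambda(A^{v_k})) = 1$, together with points $y_k$ satisfying $|y_k| \leq \eta_k R_k$ and
\[
|v_k(y_k) - U(y_k)| > 2\epsilon\, U(y_k).
\]

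Since $v_k \leq 1$, Theorem \ref{TheoremB} yields a uniform bound on $|\nabla \log v_k|$ on every compact subset of $\RR^n$. Arzelà--Ascoli and a diagonal extraction deliver a subsequence $v_k \to v$ in $C^0_{\rm loc}(\RR^n)$, with $0 < v \leq v(0) = 1$. Theorem \ref{proposition1} identifies $v$: hypothesis \eqref{Eq:ActaSIdiag} rules out the constant case, so $v = U^{\bar x, \mu}$ is a standard bubble, and the equalities $v(0) = \sup v = 1$, the normalization $f(2, \ldots, 2) = 1$, and the strict monotonicity of $t \mapsto f(t, \ldots, t)$ on the diagonal force $\bar x = 0$ and $\mu = 1$, hence $v \equiv U$.

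If $|y_k|$ is bounded along a subsequence, the uniform $C^0_{\rm loc}$ convergence together with $U(y_k) \geq c > 0$ contradicts the failure inequality. So assume $|y_k| \to \infty$. Apply the method of moving spheres to $v_k$ at the origin: let $\bar\lambda_k$ be the supremum of those $\lambda$ for which $(v_k)_{0,\lambda} \leq v_k$ on $B_{R_k}(0) \setminus B_\lambda(0)$. A direct computation for $U$ yields $\bar\lambda_U(0) = 1$, and passing the moving-sphere inequality to the limit (as in Lemma \ref{lem-0.3}) shows $\liminf_k \bar\lambda_k \geq 1$. Feeding this into the inequality
\[
v_k(y) \geq \Big(\frac{\bar\lambda_k}{|y|}\Big)^{n-2} v_k\Big(\frac{\bar\lambda_k^{2}\, y}{|y|^2}\Big),
\]
together with $v_k \to 1$ uniformly near the origin, produces the lower bound $v_k(y) \geq (1 - o_k(1))\,U(y)$ on the annulus $\bar\lambda_k \leq |y| \leq R_k - 1$. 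A matching upper bound $v_k(y) \leq (1 + o_k(1))\,U(y)$ is obtained by running the same analysis on the Kelvin transform $\tilde v_k(z) := |z|^{-(n-2)} v_k(z/|z|^2)$, which solves the same equation on $\RR^n \setminus \bar B_{1/R_k}(0)$, converges to $\tilde U = U$ in $C^0_{\rm loc}(\RR^n \setminus \{0\})$, and whose moving-sphere inequality at the origin is dual (via the change of variable $z = y/|y|^2$) to an upper bound on $v_k$ at large $|y|$.

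The two bounds combine to give $|v_k(y) - U(y)| \leq o_k(1)\, U(y)$ uniformly on $\{2 \leq |y| \leq \daste R_k\}$ once $\daste$ is chosen small enough in terms of $\epsilon$ and the $C^0_{\rm loc}$ convergence rate, contradicting the failure inequality at $y_k$ for large $k$. The main technical obstacle is this last step: converting the one-sided moving-sphere comparisons, which a priori are only qualitative limits on compact sets, into two-sided quantitative bounds whose error is uniform up to the growing radius $\daste R_k$. This requires tracking how fast $\bar\lambda_k \to 1$ and how fast $v_k \to 1$ near the origin, with both rates controlled by the $C^0_{\rm loc}$ convergence of $v_k$ on increasingly large balls.
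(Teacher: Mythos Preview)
There is a genuine gap in the upper-bound step. Your Kelvin transform $\tilde v_k(z) = |z|^{-(n-2)} v_k(z/|z|^2)$ is defined only on $|z| > 1/R_k$, so the origin lies outside its domain. To start a moving-sphere process for $\tilde v_k$ centered at $0$ you would need control of $\tilde v_k$ on the annulus $1/R_k < |z| < \mu$ for some small $\mu$; this is exactly control of $v_k$ near $\partial B_{R_k}$, which is unavailable. More to the point, if one unravels the inequality $(\tilde v_k)_{0,\mu} \le \tilde v_k$ on $|z|>\mu$, it is equivalent to the \emph{reversed} comparison $(v_k)_{0,1/\mu} \ge v_k$ on $|y|>1/\mu$, and there is no maximum-principle mechanism that produces this direction. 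So the Kelvin-duality idea does not deliver an upper bound. Your claim $\liminf_k \bar\lambda_k(0) \ge 1$ for the forward moving sphere is also unjustified: the stopping radius is fixed by a touching point on $\partial B_{R_k}$ (cf.\ the proof of Lemma~\ref{lem-0.3}), and since superharmonicity gives no lower bound for $v_k$ on that sphere, nothing prevents $\bar\lambda_k$ from staying strictly below $1$. You correctly flag the need to make the errors uniform out to $\delta_0 R_k$, but the outline does not supply the missing ingredient.

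The paper's route is substantially different and avoids both issues. The lower bound $v_k \ge (1-\epsilon)U$ on $|y|\le \delta_1 R_k$ comes directly from superharmonicity and the comparison $v_k(y) \ge (1-\epsilon^2)\big(|y|^{2-n}-R_k^{2-n}\big)$ (Lemma~\ref{lem-energy1}), not from moving spheres. For the upper bound, the paper first uses moving spheres only to bound $\min_{|y|=r} v_k$ by $(1+\epsilon)U(r)$ (Lemma~\ref{lem0.1}); then an energy-smallness estimate combined with small-energy regularity and the Harnack inequality upgrades this to a crude two-sided bound $v_k \le C_4 U$ (Lemmas~\ref{lem-smallenergy}--\ref{lem-upperbound}). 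The sharp constant $1+2\epsilon$ is then obtained by a rescaling at a hypothetical point of failure, $\hat v_k(z)=|y_k|^{n-2}v_k(|y_k|z)$, and an application of Theorem~\ref{proposition1-deg}, which forces the rescaled limit to be radially symmetric and hence contradicts the assumed gap between $\min$ and $\max$ on $|z|=1$. This use of the degenerate Liouville theorem is the decisive input for the sharp upper bound and has no counterpart in your outline.
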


\begin{proof}[Proof of the equivalence between Proposition \ref{prop-C16-1new} and Proposition \ref{prop:BbUp1}] It is clear that Proposition \ref{prop:BbUp1} implies Proposition \ref{prop-C16-1new}.

Consider the converse. Let $\daste = \daste(\epsilon)$ be as in Proposition \ref{prop-C16-1new}. Arguing by contradiction, we assume that there are some $\epsilon > 0$ and a sequence of $R_k$ and $u_k \in C^2(B_{R_k}(0))$ such that 
\[
f(\lambda(A^{u_k})) = 1 \text{ in } B_{R_k}(0) \text{ and } u_k(0) = \sup_{B_{R_k}(0)} u_k \geq k\,R_k^{-\frac{n-2}{2}}
\]
but the last estimate in Proposition \ref{prop:BbUp1} fails for each $k$.

Define
\[
\bar u_k(y) = \frac{1}{u_k(0)} u_k\Big(\frac{y}{u_k(0)^{\frac{2}{n-2}}}\Big) \text{ for } |y| \leq R_k\,u_k(0)^{\frac{2}{n-2}} =: \bar R_k.
\]
Then $f(\lambda(A^{\bar u_k})) = 1$ in $B_{\bar R_k}(0)$, $\sup_{B_{\bar R_k}(0)} \bar u_k = \bar u_k(0) = 1$, and $\bar R_k \geq k^{\frac{2}{n-2}} \rightarrow \infty$. By Proposition \ref{prop-C16-1new},
\[
|\bar u_k(y) - U(y)| \leq 2\epsilon U(y) \text{ in } B_{\daste\,\bar R_k}(0) \text{ for all sufficiently large }k.
\]
Returning to the original sequence $u_k$ we arrive at a contradiction.
\end{proof}

\begin{lem}\label{lem0.1}
Under the hypotheses of Proposition \ref{prop-C16-1new}
except for \eqref{FU-1a}, we have
\begin{equation}
v_k\to U,\qquad \mbox{in}\ C^\beta_{loc}(\RR^n),\ \ \forall\ 0<\beta<1.
\label{limit}
\end{equation}
Moreover,  
 for  every $\epsilon>0$, there exists   $k_0\ge 1$
such that
\begin{equation}
\min_{|y|=r}v_k(y)\le (1+\epsilon)U(r),\qquad\forall\  0<r<
 R_k/5, \ k\ge k_0.
\label{eq35}
\end{equation}
\end{lem}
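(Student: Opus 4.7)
The plan for \eqref{limit} is to combine the local gradient estimate with the Liouville-type Theorem \ref{proposition1}. Since $v_k \leq 1$ on $B_{R_k}(0)$ with $R_k \to \infty$, I would first invoke Theorem \ref{TheoremB} to get $|\nabla \ln v_k| \leq C(\beta)$ on every $B_\beta(0)$ for large $k$. Arzel\`a--Ascoli then delivers, along a subsequence, a $C^\alpha_{loc}$-limit $v$ (for each $0 < \alpha < 1$) with $v(0) = 1 = \sup v$; by Theorem \ref{proposition1} (whose hypotheses are in force here), $v$ is either constant---ruled out by \eqref{Eq:ActaSIdiag}---or a bubble $v(x) = (a/(1 + b^2|x - \bar x|^2))^{(n-2)/2}$. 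The maximality at $0$ forces $\bar x = 0$ and $a = 1$, while the additional conclusion of Theorem \ref{proposition1} under \eqref{02weakY} gives $f(2b^2,\dots,2b^2) = 1$. Comparing with $f(2,\dots,2) = 1$ from \eqref{F1} and using the strict monotonicity of $f$ in each $\lambda_i$ from \eqref{0304weak}, we obtain $b = 1$, so $v = U$. Uniqueness of the subsequential limit then yields \eqref{limit}.

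For the min estimate \eqref{eq35}, given $\epsilon > 0$ I would first fix $R_* = R_*(\epsilon) > 1$ with $(1+r^2)^{(n-2)/2}/r^{n-2} \leq 1 + \epsilon/3$ for all $r \geq R_*$. The bounded regime $r \in (0, R_*]$ follows immediately from the uniform convergence $v_k \to U$ on $\bar B_{R_*}(0)$ just established: for $k$ large one has $\min_{|y|=r} v_k(y) \leq (1+\epsilon/2)\,U(r)$.

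For $r \in (R_*, R_k/5)$ the plan is the method of moving spheres from the origin, localized to $B_r(0)$. Set
\[
\bar\mu_k(r) = \sup\Big\{\mu \in (0,r]: (v_k)_\lambda \leq v_k \text{ on } \{\lambda < |y| < r\} \text{ for every } \lambda \in (0,\mu)\Big\},
\]
and pick $\delta = \delta(\epsilon) \in (0, R_*-1)$ so that $(1+\delta)^{n-2}(1+\epsilon/3) < 1 + \epsilon$. The crux is the claim $\bar\mu_k(r) \leq 1 + \delta$ for all large $k$ uniformly in $r$. I would argue by contradiction: otherwise $(v_k)_{1+\delta}(y_0) \leq v_k(y_0)$ at a fixed $y_0$ with $|y_0| \in (1+\delta, R_*) \subset (1+\delta, r)$, and a direct computation shows that $(U)_\lambda > U$ strictly on $\RR^n \setminus \overline{B_\lambda(0)}$ for any $\lambda > 1$, which together with the $C^0_{loc}$ convergence $v_k \to U$ yields a contradiction. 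Since $\bar\mu_k(r) \leq 1+\delta < r$, one argues as in the proof of Theorem \ref{proposition1} (cf.\ \cite[Lemma 4.5]{LiLi05}) via the strong maximum principle and Hopf lemma to produce a contact point $y_k^* \in \partial B_r(0)$ with $(v_k)_{\bar\mu_k(r)}(y_k^*) = v_k(y_k^*)$. Using $v_k \leq 1$, this gives
\[
v_k(y_k^*) = \Big(\frac{\bar\mu_k(r)}{r}\Big)^{n-2} v_k\Big(\frac{\bar\mu_k(r)^2 y_k^*}{r^2}\Big) \leq \Big(\frac{1+\delta}{r}\Big)^{n-2} = (1+\delta)^{n-2}\,\frac{(1+r^2)^{(n-2)/2}}{r^{n-2}}\,U(r) \leq (1+\epsilon)\,U(r),
\]
yielding $\min_{|y|=r} v_k(y) \leq (1+\epsilon)\,U(r)$.

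The hard part will be the contact-point analysis, in particular excluding the degenerate alternative that the moving spheres stops at $\bar\mu_k(r)$ because $v_k$ coincides with its own Kelvin transform $(v_k)_{\bar\mu_k(r)}$ throughout the annulus $\{\bar\mu_k(r) < |y| < r\}$ rather than because of a genuine outer-boundary contact on $\partial B_r(0)$. The resolution is that by the strong maximum principle for the linearized fully nonlinear equation satisfied by the difference $v_k - (v_k)_{\bar\mu_k(r)}$, any interior coincidence propagates to the entire connected component, which would force $v_k$ to equal the standard bubble $U^{0, \bar\mu_k(r)^{\frac{n-2}{2}}}$ on that annulus and hence (by $v_k(0) = 1$) to equal $U|_{B_{R_k}}$, a case in which the lemma is trivial; in the generic situation the contact must therefore occur on $\partial B_r(0)$.
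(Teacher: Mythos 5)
Your proof of \eqref{limit} matches the paper's in all essentials: gradient estimate via Theorem \ref{TheoremB}, Arzel\`a--Ascoli, Theorem \ref{proposition1} to identify the limit as a bubble, and then the normalization/maximality/monotonicity argument pins down $U$. The paper compresses this last step into one sentence, but what you spelled out is exactly what is intended.

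For \eqref{eq35} you take a genuinely different route from the paper. The paper argues by contradiction: assuming a sequence $r_i\to\infty$ with $v_{k_i}>(1+\epsilon)U$ on all of $\partial B_{r_i}$, the outer-boundary comparison shows the moving sphere about the origin passes $\lambda=1+\delta$, and sending $i\to\infty$ yields $U_{1+\delta}\le U$ on a fixed annulus, which is false. Your argument is direct: you define the localized stopping time $\bar\mu_k(r)$, bound it by $1+\delta$ uniformly in $r$ using the convergence $v_k\to U$ on a fixed compact set and the fact that $U_{1+\delta}>U$ strictly outside $B_{1+\delta}$, then produce a contact point $y_k^*\in\partial B_r(0)$ and estimate there using $v_k\le 1$. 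Both use the same ingredients (moving spheres, Theorem \ref{TheoremB}, local convergence to $U$), but yours converts the qualitative nonstopping into an explicit quantitative bound at a contact point, while the paper extracts a contradiction in the limit. Your version is in some ways more transparent; the paper's avoids the contact-point dichotomy entirely.

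One correction: your handling of what you call the ``degenerate alternative'' is both wrong and unnecessary. If $v_k\equiv (v_k)_{\bar\mu_k(r)}$ throughout the annulus $\{\bar\mu_k(r)<|y|<r\}$, it does \emph{not} follow that $v_k$ equals a standard bubble --- the identity $v_k=(v_k)_\mu$ on an annulus only encodes inversion symmetry across $\partial B_\mu$ and is satisfied by a much larger class of functions. However, no such rigidity is needed: in that degenerate case the identity $v_k(y^*)=(v_k)_{\bar\mu_k(r)}(y^*)$ holds at \emph{every} $y^*\in\partial B_r(0)$, so the very same computation you display ($v_k(y^*)=(\bar\mu_k(r)/r)^{n-2}v_k(\bar\mu_k(r)^2y^*/r^2)\le((1+\delta)/r)^{n-2}\le(1+\epsilon)U(r)$) goes through with any choice of $y^*$. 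The dichotomy ``interior coincidence everywhere'' versus ``contact on $\partial B_r(0)$'' therefore requires no extra work; both horns give the estimate directly.
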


\begin{proof}  We first prove 
\eqref{limit}.  Since $v_k$ satisfies \eqref{ab1new},
  we deduce from 
Theorem \ref{TheoremB} that
$$
|\nabla \ln v_k|\le C\quad\mbox{in}\ B_{ R_k-1},\ \ \forall\ k,
$$
where $C$ is independent of $k$.
This yields \eqref{limit} in view of Theorem \ref{proposition1}.

We now prove \eqref{eq35}.
Suppose the contrary, then there exists some $\epsilon>0$ and  sequences of
$k_i\to \infty$, $0<r_i<R_{k_i}/5$ such that
\begin{equation}
v_{k_i}> (1+\epsilon)U\ \ \ \mbox{on}\ \partial B_{ r_i}.
\label{ri}
\end{equation}
Because of \eqref{limit},
 $r_i\to\infty$.

As in the proof of Lemma \ref{lem-1new},
there exists $\lambda^{(0)}_i>0$ such that
\begin{equation}
(v_{k_i})_\lambda \le v_{k_i}\
\mbox{in}\ B_{r_i}\setminus B_\lambda, \forall 0<\lambda<
\lambda^{(0)}_i\
\mbox{and}\  |x|\le r_i.
\label{26}
\end{equation}
By the explicit expression of $U$, there exists some small $\delta>0$ independent of $i$
such that, for large $i$, 
$$
U_\lambda(y)\le (1+\frac \epsilon 4)U(y),\quad
\forall\ y\in \partial B_{ r_i},\ 
\lambda^{(0)}_i \le \lambda\le 1+\delta,
$$
By the uniform convergence of $v_{k_i}$ to $U$ on compact
subsets of $\RR^n$, we have, for large $i$,
$$
(v_{k_i})_\lambda \le (1+\frac \epsilon 2) 
U(y),\quad
\forall\ y\in \partial B_{ r_i},\
\lambda^{(0)}_i \le \lambda\le 1+\delta,
$$
As in the proof of Lemma \ref{lem-1new}, the moving sphere 
procedure does not stop before reaching $\lambda=1+\delta$, namely
we have, for large $i$, 
$$
(v_{k_i})_\lambda \le v_{k_i}\
\mbox{in}\ B_{r_i}\setminus B_\lambda, \forall 0<\lambda<
1+\delta\
\mbox{and}\  |x|\le r_i.
$$
Sending $i$ to $\infty$ leads to
$$
U_{1+\delta}(y)\le U(y),\ \ \forall\ 1+\delta\le |y|\le 2.
$$
A contradiction --- since we see from the explicit expression of $U$ that
$U_{1+\delta}(y)>U(y)$
for all $1<1+\delta< |y|\le 2$.
\end{proof}

\begin{lem}\label{lem-energy1}
Under the hypotheses of Proposition \ref{prop-C16-1new}, 
 for  any $\epsilon > 0$, there exist a small $\delta_1>0$ and a large  $r_1 > 1$,
depending only on $(f, \Gamma)$ and $\epsilon$,
such that, for all sufficiently large $k$,
\begin{align} 
&v_k(y) \ge (1-\epsilon)U(y),
\qquad\forall\ |y|\le \delta_1 R_k,
\label{U00}\\
\text{ and }\qquad &\int_{ r_1\le |y|\le \delta_1 R_k }
v_k^{\frac {n+2}{n-2}}
\le \epsilon.
\label{energy1}
\end{align}
\end{lem}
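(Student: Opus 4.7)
The plan is to prove \eqref{U00} and \eqref{energy1} separately, both relying on Lemma \ref{lem0.1} and the moving-spheres inequality inherent in its proof: for any $\theta > 0$, for all sufficiently large $k$,
\[
(v_k)_\lambda(y) \le v_k(y) \qquad \forall\, 0 < \lambda \le 1-\theta,\ y \in B_{R_k/5} \setminus B_\lambda(0).
\]

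For the lower bound \eqref{U00}, I would split $\{|y| \le \delta_1 R_k\}$ into an inner ball $\{|y| \le r_0\}$, where \eqref{limit} directly gives $v_k \ge (1-\epsilon) U$ for large $k$, and the outer annulus $\{r_0 \le |y| \le \delta_1 R_k\}$, on which I fix $\lambda = 1-\theta$ (with $\theta$ depending on $\epsilon$) and use the moving-spheres inequality: $v_k(y) \ge (\lambda/|y|)^{n-2} v_k(\lambda^2 y/|y|^2)$. The target has norm $\lambda^2/|y| \le 1/r_0$, so \eqref{limit} gives $v_k(\lambda^2 y/|y|^2) \to U(0) = 1$. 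Comparing with $U(y) = (1+|y|^2)^{-(n-2)/2}$ and noting $(1+|y|^2)^{(n-2)/2}/|y|^{n-2} \to 1$ as $|y| \to \infty$, the ratio $v_k(y)/U(y) \ge \lambda^{n-2}(1-o(1)) \ge 1-\epsilon$ for $\theta$ small, $r_0$ large, and $k$ large.

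For the energy bound \eqref{energy1}, I use the trace consequence of \eqref{FU-1a}: since $f < 1$ on $\Gamma \cap \{\sum_i\lambda_i < \delta_0\}$ and $f(\lambda(A^{v_k}))=1$, we have $\operatorname{tr} A^{v_k} \ge \delta_0$, which via the explicit form of $A^{v_k}$ yields
\[
-\Delta v_k \ge c_0\, v_k^{(n+2)/(n-2)} \ \text{in } B_{R_k}(0),\qquad c_0 = \tfrac{(n-2)\delta_0}{2}.
\]
Integrating over $\{r_1 \le |y| \le \delta_1 R_k\}$ and applying the divergence theorem,
\[
c_0 \int_{r_1 \le |y| \le \delta_1 R_k} v_k^{(n+2)/(n-2)} \le \int_{\partial B_{r_1}}\partial_\nu v_k - \int_{\partial B_{\delta_1 R_k}}\partial_\nu v_k.
\]
For generic $r_1$, the first term converges as $k \to \infty$ to $\int_{\partial B_{r_1}}\partial_\nu U$ via distributional convergence of $-\Delta v_k \to -\Delta U$ on compacts (from \eqref{limit}), approaching $-(n-2)|\SSphere^{n-1}|$ from below at rate $O(r_1^{-2})$. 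For the second term, I would establish a spherical Harnack $\max_{\partial B_r} v_k \le C_H \min_{\partial B_r} v_k$ uniform in $r \in [r_1,\delta_1 R_k]$: for each $z$ with $|z|=r$, the conformal rescaling $w(y) := v_k(z)^{-1} v_k(z + v_k(z)^{-2/(n-2)}y)$ satisfies the same equation with $w(0)=1$, and using \eqref{U00} together with $v_k \le 1$ to control $w \le C$ on $B_2(0)$, Theorem \ref{TheoremB} gives a universal bound on $|\nabla \ln w|$. Combined with \eqref{eq35}, this leads to $v_k \le C' U$ and $|\partial_\nu v_k| \le C' U(r)/r$ on $\partial B_{\delta_1 R_k}$; a matching argument then shows $\int_{\partial B_{\delta_1 R_k}}\partial_\nu v_k = -(n-2)|\SSphere^{n-1}| + O(r_1^{-2}) + o(1)$ as $k \to \infty$. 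Subtracting gives $\int_{r_1 \le |y| \le \delta_1 R_k} v_k^{(n+2)/(n-2)} \le C\,(r_1^{-2} + o(1))$, which is $\le \epsilon$ for $r_1$ large and $k$ sufficiently large.

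The main obstacle is verifying that the rescaled $w$ stays uniformly bounded on $B_2(0)$. Since moving spheres at the origin provide only one-sided (lower) bounds on $v_k$, and for general nonhomogeneous $f$ the trace of $A^{v_k}$ admits no a priori upper bound, this requires iteratively applying Theorem \ref{TheoremB} at each conformal scale, exploiting conformal invariance, the a priori bound $v_k \le 1$, and the lower bound \eqref{U00}; tracking constants through this iteration is the technical heart of the argument.
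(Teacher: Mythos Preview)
Your approach to \eqref{U00} via moving spheres is workable, though the paper's argument is simpler: since $v_k$ is superharmonic and $v_k \ge (1-\epsilon^2)U(r_2) \ge (1-2\epsilon^2)r_2^{2-n}$ on $\partial B_{r_2}$ by \eqref{limit}, the maximum principle against the harmonic function $(1-\epsilon^2)(|y|^{2-n} - R_k^{2-n})$ immediately gives the lower bound on $r_2 \le |y| \le \delta_2 R_k$.

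The real problem is your route to \eqref{energy1}. The integration-by-parts scheme requires you to control the outer flux $-\int_{\partial B_{\delta_1 R_k}} \partial_r v_k$ to within $\epsilon$ of the inner flux, and for this you invoke a spherical Harnack inequality $\max_{\partial B_r} v_k \le C_H \min_{\partial B_r} v_k$ uniform in $r$. But this Harnack is precisely what the paper establishes \emph{after} the energy bound (it is the content of Lemma~\ref{lem-upperbound}, whose proof feeds \eqref{energy1} into the small-energy regularity Lemma~\ref{lem-smallenergy}). Your proposed rescaling $w(y) = v_k(z)^{-1} v_k(z + v_k(z)^{-2/(n-2)}y)$ does not stay bounded on $B_2(0)$ from the information available: the a priori bound $v_k \le 1$ only gives $w \le v_k(z)^{-1} \lesssim |z|^{n-2}$, which blows up. The ``iterative'' fix you allude to does not close, because the natural conformal scale at distance $r$ is $v_k(z)^{-2/(n-2)} \sim r^2 \gg r$, so each step reaches far beyond the region already controlled. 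Even granting the Harnack, the resulting gradient bound $|\nabla v_k| \le C v_k/r$ yields $|F_{v_k}(\delta_1 R_k)| \le C$, a constant, not something within $\epsilon$ of $F_{v_k}(r_1)$; the ``matching argument'' you need is tantamount to the energy estimate itself.

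The paper avoids all of this by not integrating by parts. Instead it subtracts the harmonic function to form $\hat v_k := v_k - (1-2\epsilon^2)|y|^{2-n}$, which is nonnegative on $\partial(B_{\delta_2 R_k} \setminus B_{r_2})$ and satisfies $-\Delta \hat v_k \ge \tfrac{n-2}{2}\delta\, v_k^{(n+2)/(n-2)}$. The Green's function lower bound of Corollary~\ref{cor-App1-4} then converts this into
\[
\min_{|x|=R_k'} \hat v_k \;\ge\; C^{-1}(\delta_2 R_k)^{2-n} \int_{2r_2 \le |y| \le \delta_2 R_k/8} v_k^{(n+2)/(n-2)},
\]
while \eqref{eq35} forces $\min_{|x|=R_k'} v_k \le (1+2\epsilon^2)(R_k')^{2-n}$, hence $\min_{|x|=R_k'} \hat v_k \le C\epsilon^2 (\delta_2 R_k)^{2-n}$. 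Comparing the two yields \eqref{energy1} directly, with no need for any upper bound or Harnack on $v_k$ at scale $R_k$.
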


\begin{proof} Assume without loss of generality that $\epsilon \in (0,1/2)$. Since
$v_k\to U$ in $C^0_{loc}(\RR^n)$, 
there exist $r_2>1$ and $k_1$,  depending on $\epsilon$, 
 such that for all $k\ge k_1$
\begin{align}
v_k(y)&\ge (1-\epsilon^2)U(y),\qquad
\forall\ |y|\le r_2,
\label{U10}\\
v_k(y)&\ge (1-\epsilon^2)U(r_2)\ge (1-2\epsilon^2) r_2^{2-n},
\qquad\forall\ |y|=r_2.
\label{U0}
\end{align}

By \eqref{FU-1a},
$$
Trace\ (A^{v_k})\ge \delta > 0,
$$
and therefore
\begin{equation}
-\Delta v_k(y)\ge \frac {n-2}2 \delta v_k(y) ^{ \frac {n+2}{n-2} }
\qquad\mbox{in}\  r_2\le 
 |y| \le R_k.
\label{U4}
\end{equation}

Using the superharmonicity of $v_k$
and the maximum principle, we obtain
$$
v_k(y)\ge 
(1-\epsilon^2)
\left( |y|^{2-n}-
R_k^{2-n}\right),\qquad r_2\le |y|\le R_k.
$$
Thus, for any $\delta_2 \in (0,\epsilon^{\frac{2}{n-2}})$, we have for all sufficiently large $k$ that
\begin{equation}
v_k(y)\ge 
(1-\epsilon^2)(1 - \delta_2^{n-2}) 
|y|^{2-n} \geq (1-2\epsilon^2)
|y|^{2-n},\qquad
r_2\le |y|\le \delta_2 R_k.
\label{U5}
\end{equation}
Now if $\delta_1 < \delta_2$, \eqref{U00} is readily seen from \eqref{U10} and \eqref{U5}.

Let
$$
\hat v_k(y):= v_k(y)- (1-2\epsilon^2)
|y|^{2-n}.
$$
Then
$$
-\Delta 
\hat v_k(y)\geq \hat f(y):=
\frac {n-2}2 \delta v_k(y) ^{ \frac {n+2}{n-2} }
\qquad\mbox{in}\  r_2\le 
 |y| \le \delta_2 R_k,
$$
and
$$
\hat v_k(y)\ge 0,\quad \mbox{for}\ y\in \partial (B_{\delta_2 R_k}\setminus 
B_{ r_2}).
$$
Let $R_k' = \frac{\delta_2R_k}{2}$. Enlarging $k_1$ if necessary, we can apply Corollary \ref{cor-App1-4} 
in Appendix \ref{Sec:AppA} to get
\begin{equation}
\min_{ |x|= R_k'}
 \hat v_k(x)
\ge 
C^{-1} (\delta_2 R_k)^{2-n}
\int_{ 2r_2\le |y|\le \delta_2 R_k/8} \delta  v_k(y) ^{ \frac {n+2}{n-2} }
dy, \qquad
\forall\ k \geq k_1,
\label{U6}
\end{equation}
where here and below $C$ is some positive constant depending only on $n$. On the other hand, by Lemma \ref{lem0.1}, we have (after enlarging $k_1$ if necessary) 
\[
\min_{ |x|= R_k'} v_k(x) \le(1+\epsilon^2) U(R_k')
\le   (1+2\epsilon^2)(R_k')^{2-n}, \qquad  \forall\ k \geq k_1,
\label{U7}
\]
which implies that
\begin{equation}
\min_{ |x|= R_k'} \hat v_k(x) \le C\,\epsilon^2 (\delta_2 R_k)^{2-n}, \qquad  \forall\ k \geq k_1.
\label{U7}
\end{equation} 
It now follows from \eqref{U6} and \eqref{U7} that
$$
\int_{ 2r_2\le |y|\le \delta_2 R_k/8} \delta  v_k(y) ^{ \frac {n+2}{n-2} }
dy
\le 
c_1\,\epsilon^2
$$
where $c_1$ depends only on $n$. \eqref{energy1} is then established for $\epsilon \leq \frac{1}{c_1}$ with $r_1 = 2r_2$ and $\delta_1 = \delta_2/8$. The conclusion for $\epsilon > 1/c_1$ also follows.
\end{proof}

\begin{lem}  Let $(f, \Gamma)$ satisfy \eqref{01weak}-\eqref{0304weak}. 
Then there  exist $\delta_3>0$ and $C_3>1$, depending only on $(f, \Gamma)$, such that if $u\in
C^2(B_2(0))$ satisfies
$$
f(\lambda(A^u))=1,   u>0,  \qquad \mbox{in}\ B_2(0),
$$
and
$$
\int_{ B_2(0)} u^{ \frac {2n}{n-2} }\le \delta_3,
$$
then 
$$
u\le C_3\quad\mbox{in}\ B_1(0).
$$
\label{lem-smallenergy}

If $(f,\Gamma)$ satisfies in addition the conditions \eqref{Eq:ActaSIdiag}, \eqref{02weakY} and the normalization condition \eqref{F1}, then $\delta_3$ can be chosen to be any constant smaller than $\int_{\RR^n} U^{\frac{2n}{n-2}}\,dx$.
\end{lem}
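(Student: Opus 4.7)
The plan is to argue by contradiction via a blow-up / concentration-compactness argument in the spirit of standard semilinear analysis. Suppose the first assertion fails. Then for each $k \geq 1$ one finds $u_k \in C^2(B_2(0))$ solving $f(\lambda(A^{u_k})) = 1$ with $u_k > 0$ and satisfying
\[
\int_{B_2(0)} u_k^{\frac{2n}{n-2}}\,dx \leq \frac{1}{k}, \qquad \sup_{B_1(0)} u_k \geq k.
\]

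The next step is a Schoen-type rescaling: I would select $x_k \in \bar B_{3/2}(0)$ maximizing the weighted quantity $\big(\frac32 - |x|\big)^{\frac{n-2}{2}} u_k(x)$. Because $\sup_{B_1(0)} u_k \geq k$, this weighted maximum is at least $(1/2)^{\frac{n-2}{2}} k \to \infty$, which forces $u_k(x_k) \to \infty$. Setting $\mu_k := u_k(x_k)^{-\frac{2}{n-2}} \to 0$ and $d_k := (\frac32 - |x_k|)/2$, the rescaled functions
\[
\tilde u_k(y) := \frac{1}{u_k(x_k)}\,u_k(x_k + \mu_k y), \qquad |y| < d_k/\mu_k,
\]
satisfy $\tilde u_k(0) = 1$, $\tilde u_k \leq 2^{\frac{n-2}{2}}$ by the choice of $x_k$, and $d_k/\mu_k \to \infty$. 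By the conformal invariance \eqref{Eq:CIProp}, $f(\lambda(A^{\tilde u_k})) = 1$ on $B_{d_k/\mu_k}(0)$. Theorem \ref{TheoremB} then gives a uniform bound on $|\nabla \ln \tilde u_k|$ on every compact subset of $\RR^n$, so after passing to a subsequence, $\tilde u_k \to v$ in $C^0_{loc}(\RR^n)$ with $v \in C^{0,1}_{loc}$, $v > 0$ and $v(0) = 1$.

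The contradiction for the first part now comes from the conformal invariance of the $L^{\frac{2n}{n-2}}$ norm: since $\tilde u_k = (u_k)_{\varphi_k}$ with $\varphi_k(y) = x_k + \mu_k y$, for every fixed $R > 0$ and all large $k$,
\[
\int_{B_R(0)} \tilde u_k^{\frac{2n}{n-2}}\,dy = \int_{B_{\mu_k R}(x_k)} u_k^{\frac{2n}{n-2}}\,dx \leq \int_{B_2(0)} u_k^{\frac{2n}{n-2}}\,dx \leq \frac{1}{k} \to 0.
\]
Passing to the limit via $C^0_{loc}$ convergence yields $v \equiv 0$ on every $B_R(0)$, contradicting $v(0) = 1$.

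For the second assertion, I would fix any $\delta_3 < \int_{\RR^n} U^{\frac{2n}{n-2}}\,dx$ and rerun the same blow-up, but now under the stronger hypotheses. Theorem \ref{proposition1} then identifies $v$ as a standard bubble $U^{\bar x, \mu}$; under the normalization \eqref{F1} combined with the strict monotonicity in \eqref{0304weak}, the diagonal value is uniquely determined ($2b^2a^{-2} = 2$), so every admissible bubble has the common $L^{\frac{2n}{n-2}}$ mass $\int U^{\frac{2n}{n-2}}\,dx$. Combining Fatou's lemma with the change-of-variables identity above yields $\int U^{\frac{2n}{n-2}}\,dx \leq \delta_3$, contradicting the choice of $\delta_3$. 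The only mildly delicate point in the argument is the Schoen-type weighted maximum-point selection, which must simultaneously ensure (i) the rescaled ball radius $d_k/\mu_k$ tends to infinity, (ii) the $\tilde u_k$'s are uniformly bounded, and (iii) the limit satisfies $v(0) = 1$ and is therefore nontrivial; everything else is a routine application of tools (Theorem \ref{TheoremB}, Theorem \ref{proposition1}, conformal invariance \eqref{Eq:CIProp}) already established in the paper.
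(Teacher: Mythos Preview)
Your proposal is correct and follows essentially the same approach as the paper's proof: both argue by contradiction, use the Schoen-type weighted maximum point selection on $\bar B_{3/2}(0)$ to produce the blow-up sequence, rescale to obtain $\tilde u_k$ with $\tilde u_k(0)=1$ and $\tilde u_k\le 2^{(n-2)/2}$, apply Theorem~\ref{TheoremB} to extract a positive $C^0_{loc}$ limit, and derive the contradiction from the conformal invariance of the $L^{2n/(n-2)}$ norm (and, for the second assertion, from Theorem~\ref{proposition1} identifying the limit as a bubble of fixed mass).
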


\begin{proof}
We adapt the proof of \cite[Lemma 6.4]{LiLi03}. Arguing by contradiction, we can find a sequence of $0 < u_j \in C^2(B_2)$ such that $f(\lambda(A^{u_j})) = 1$ in $B_2(0)$,
\[
\int_{B_2(0)} u_j ^{ \frac {2n}{n-2} } \rightarrow 0
\]
but
\[
d(y_j)^{\frac{n-2}{2}}u_j(y_j) = \max_{\bar B_{3/2}(0)} d(y)^{\frac{n-2}{2}}u_j(y) \rightarrow \infty,
\]
where $y_j \in B_{3/2}(0)$ and $d(y) = 3/2 - |y|$.

Let $\sigma_j = \frac{1}{2}d(y_j) > 0$,
\[
v_j(z) = \frac{1}{u_j(y_j)}u_j\Big(y_j + \frac{1}{u_j(y_j)^{\frac{2}{n-2}}} z\Big) \text{ for } |z| < r_j := u_j(y_j)^{\frac{2}{n-2}} \sigma_j \rightarrow \infty.
\]
Then by the conformal invariance property \eqref{Eq:CIProp}, $f(\lambda(A^{v_j})) = 1$ in $B_{r_j}(0)$, $v_j(0) = 1$, $v_j \leq 2^{\frac{n-2}{2}}$ in $B_{r_j}(0)$ and
\begin{equation}
\int_{B_{r_j}(0)} v_j^{\frac{2n}{n-2}} \rightarrow 0.
	\label{Eq:SLcrit}
\end{equation}

By Theorem \ref{TheoremB}, there is a constant $C$ independent of $j$ such that
\[
|\nabla \ln v_j| \leq C \text{ in } B_{r_j/2}(0).
\]
Thus, after passing to a subsequence, we can assume that $v_j$ converges in $C^0_{loc}(\RR^n)$ to some positive function $v$ (as $v_j(0) = 1$). This contradicts \eqref{Eq:SLcrit}.

The above argument can be adapted to prove the last assertion of the lemma: Equation \eqref{Eq:SLcrit} is replaced by
\[
\int_{B_{r_j}(0)} v_j^{\frac{2n}{n-2}} \leq \delta_3 <  \int_{\RR^n} U^{\frac{2n}{n-2}}\,dx.
\]
On the other hand, by Theorem \ref{proposition1}, we have $v_j \rightarrow U$ in $C^0_{loc}(\RR^n)$. This gives a contradiction.
\end{proof}

\begin{lem}\label{lem:Ressmallenergy}
\  Let $(f, \Gamma)$ satisfy \eqref{01weak}-\eqref{0304weak} and let $\delta_3$, $C_3$ be as in Lemma \ref{lem-smallenergy}. If $u\in
C^2(B_{2R}(0))$ satisfies
$$
f(\lambda(A^u))=1,   u>0,  \qquad \mbox{in}\ B_{2R}(0),
$$
and
$$
\int_{ B_{2R(0)}} u^{ \frac {2n}{n-2} }\le \delta_3,
$$
then 
$$
u\le C_3\,R^{-\frac{n-2}{2}}\quad\mbox{in}\ B_{R}(0).
$$
\end{lem}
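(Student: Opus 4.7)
The plan is to reduce the statement to Lemma \ref{lem-smallenergy} by a single dilation, exploiting the conformal invariance \eqref{Eq:CIProp} together with the fact that the exponent $\frac{2n}{n-2}$ is the critical Sobolev exponent, which makes the energy integral $\int u^{\frac{2n}{n-2}}$ scale-invariant.

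Concretely, I would take the dilation $\varphi(x)=Rx$, whose Jacobian satisfies $|J_\varphi|^{\frac{n-2}{2n}}=R^{\frac{n-2}{2}}$, and set
\[
\tilde u(x) := u_\varphi(x) = R^{\frac{n-2}{2}}\,u(Rx),\qquad x\in B_2(0).
\]
Since $u \in C^2(B_{2R}(0))$ with $f(\lambda(A^u))=1$ on $B_{2R}(0)$, the conformal invariance \eqref{Eq:CIProp} immediately yields
\[
f(\lambda(A^{\tilde u}))=f(\lambda(A^u\circ \varphi))=1 \quad\text{on } B_2(0),
\]
and $\tilde u > 0$ there. Next I would check the energy hypothesis: changing variables $y=Rx$,
\[
\int_{B_2(0)} \tilde u^{\frac{2n}{n-2}}\,dx
= \int_{B_2(0)} R^n\, u(Rx)^{\frac{2n}{n-2}}\,dx
= \int_{B_{2R}(0)} u(y)^{\frac{2n}{n-2}}\,dy \le \delta_3.
\]

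Having verified the hypotheses of Lemma \ref{lem-smallenergy} for $\tilde u$ on $B_2(0)$, I would apply that lemma to conclude $\tilde u(x)\le C_3$ for all $x\in B_1(0)$. Unwinding the definition of $\tilde u$, this is exactly the statement that $R^{\frac{n-2}{2}}\,u(y)\le C_3$ for all $y\in B_R(0)$, which is the desired bound.

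There is no genuine obstacle here: the entire content of the lemma is the conformal (more precisely, dilation) invariance of the equation combined with the criticality of the $L^{\frac{2n}{n-2}}$ norm. One should just be mindful that the factor $R^{\frac{n-2}{2}}$ coming from $|J_\varphi|^{\frac{n-2}{2n}}$ agrees exactly with the scaling used in \eqref{Eq:uvarphi}, so that $\tilde u$ is a genuine solution of $f(\lambda(A^{\tilde u}))=1$ (rather than of a rescaled equation). The proof therefore reduces to two short displays and one invocation of Lemma \ref{lem-smallenergy}.
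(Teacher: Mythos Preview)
Your proposal is correct and matches the paper's proof exactly: the paper simply says the lemma follows from Lemma~\ref{lem-smallenergy} via the change of variables $\tilde u(y)=R^{\frac{n-2}{2}}u(Ry)$ for $|y|\le 2$, which is precisely the dilation argument you have spelled out in detail.
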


\begin{proof} This follows from Lemma \ref{lem-smallenergy} and a change of variables, $\tilde u(y) = R^{\frac{n-2}{2}}u(Ry)$ for $|y| \leq 2$.
\end{proof}

\bigskip

\begin{lem}\label{lem-upperbound}
Under the hypotheses of Proposition \ref{prop-C16-1new}, there exist positive constants $\delta_4 > 0$ and $C_4 > 1$, depending only on $(f,\Gamma)$,  such that, for all sufficiently large $k$,
\begin{equation}
v_k(y)\le C_4 U(y),
\qquad\forall\
|y|\le \delta_4 R_k.
\end{equation}
\end{lem}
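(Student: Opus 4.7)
The plan is to prove a universal Harnack inequality on each sphere $\partial B_R(0)$ for $R$ ranging up to order $R_k$, and couple it with the upper bound on the spherical minimum from Lemma~\ref{lem0.1}. Fix $\epsilon \in (0,\delta_3)$ small, where $\delta_3$ is from Lemma~\ref{lem-smallenergy}, and let $r_1 > 1$, $\delta_1 > 0$ be the constants produced by Lemma~\ref{lem-energy1} for this $\epsilon$. Since $v_k \le 1$ and $\tfrac{2n}{n-2} = \tfrac{n+2}{n-2} + 1$, we upgrade the energy bound to
\[
\int_{r_1 \le |y| \le \delta_1 R_k} v_k^{\frac{2n}{n-2}} \le \int_{r_1 \le |y| \le \delta_1 R_k} v_k^{\frac{n+2}{n-2}} \le \epsilon \le \delta_3.
\]

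For each $y_0$ with $2r_1 \le |y_0| \le \tfrac{2\delta_1}{3}R_k$, set $\rho = |y_0|/4$ and consider the rescaled function $\tilde v_k(y) := \rho^{(n-2)/2}\,v_k(y_0 + \rho y)$ on $B_2(0)$. By conformal invariance \eqref{Eq:CIProp} it satisfies $f(\lambda(A^{\tilde v_k}))=1$ on $B_2(0)$; by the conformal invariance of the $L^{2n/(n-2)}$ integral,
\[
\int_{B_2(0)} \tilde v_k^{\frac{2n}{n-2}}\,dy = \int_{B_{2\rho}(y_0)} v_k^{\frac{2n}{n-2}}\,dx \le \delta_3,
\]
so Lemma~\ref{lem-smallenergy} yields $\tilde v_k \le C_3$ on $B_1(0)$. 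Applying Theorem~\ref{TheoremB} to a further $z \mapsto z/2$ dilation of $\tilde v_k$ (again conformally invariant) then upgrades this to $|\nabla \ln \tilde v_k| \le C$ on $B_{1/2}(0)$, where $C$ depends only on $(f,\Gamma)$ since $C_3$ does. Unscaling gives
\[
|\nabla \ln v_k|(x) \le \frac{C'}{|y_0|} \qquad \text{for } |x - y_0| \le \tfrac{|y_0|}{8}.
\]

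Since $y_0$ is arbitrary on the sphere $\partial B_R(0)$ (with $2r_1 \le R \le \tfrac{2\delta_1}{3}R_k$), the bound $|\nabla \ln v_k| \le C'/R$ holds at every point of $\partial B_R(0)$. Integrating along any great-circle arc of Euclidean length $\le \pi R$ yields the spherical Harnack inequality
\[
\max_{\partial B_R} v_k \le e^{\pi C'}\,\min_{\partial B_R} v_k \le e^{\pi C'}(1+\epsilon)\,U(R),
\]
the last step being Lemma~\ref{lem0.1}. Since $U$ is radial, this reads $v_k(y) \le C_4\,U(y)$ for $2r_1 \le |y| \le \tfrac{2\delta_1}{3}R_k$. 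For $|y| \le 2r_1$, combine with the trivial estimate $v_k \le 1 \le U(2r_1)^{-1} U(y)$. Taking $\delta_4 = \tfrac{2\delta_1}{3}$ and enlarging $C_4$ accordingly completes the proof.

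The main obstacle is to extract the correct polynomial decay $U(y) \sim |y|^{-(n-2)}$ rather than the weaker $|y|^{-(n-2)/2}$ that Lemma~\ref{lem-smallenergy} alone produces; the resolution is that the pointwise small-energy bound is not used directly as a polynomial decay rate, but only as input to the gradient estimate of Theorem~\ref{TheoremB}, which—through a chained Harnack around each sphere—transfers the sharp rate from $\min_{\partial B_R} v_k$ (controlled by Lemma~\ref{lem0.1}) to $\max_{\partial B_R} v_k$.
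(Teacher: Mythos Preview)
Your proof is correct and follows essentially the same strategy as the paper: use the small-energy bound from Lemma~\ref{lem-energy1} to feed Lemma~\ref{lem-smallenergy}, pass to a gradient estimate via Theorem~\ref{TheoremB}, and obtain a uniform Harnack inequality on each sphere that, combined with Lemma~\ref{lem0.1}, yields the $C_4U$ upper bound. The only cosmetic difference is that the paper performs a single annular rescaling $\tilde v_k(z)=r^{(n-2)/2}v_k(rz)$ on $\{1/2<|z|<2\}$ rather than your ball rescalings centered at each $y_0\in\partial B_R$, but the resulting spherical Harnack and the overall argument are the same.
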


\begin{proof} Let $\delta_3$ be as in Lemma \ref{lem-smallenergy}. Since $v_k \leq 1$, we deduce from Lemma \ref{lem-energy1}, there is $r_1 > 1$ and $\delta_1 > 0$ such that
\begin{equation}
\int_{ r_1\le |y|\le \delta_1 R_k} v_k^{ \frac {2n}{n-2} }
\le  \epsilon.
\label{energy2}
\end{equation}

For any $2r_1<r< \delta_1 R_k/2$, consider
$$
\tilde v_k(z)= r^{ \frac {n-2}2 } v_k(rz),\quad
\frac 12<|z|<2.
$$
By \eqref{energy2}, we have, for large $k$,   
$$
\int_{ \frac 12<|z|<2 } \tilde v_k(z)^{ \frac {2n}{n-2} }
=\int_{  \frac r2 <|\eta|< 2r}
v_k(\eta)^{ \frac {2n}{n-2} }
\le  \delta_3.
$$
It follows from Lemma \ref{lem:Ressmallenergy} that
$$
\tilde v_k(z)\le C \qquad \forall\ \frac 23 <|z|<  \frac 54,
$$
for some universal constant $C$. Since $\tilde v_k$ also satisfies $f(\lambda(A^{\tilde v_k})) = 1$, we can apply Theorem \ref{TheoremB} to obtain
\[
|\nabla \ln \tilde v_k(z)| \leq C \qquad \forall\  |z| = 1,
\]
which implies that $\max_{|z| = 1} \tilde v_k \leq C\,\min_{\partial B_1}\tilde v_k$. Returning to $v_k$, we obtain
\[
\max_{\partial B_r} v_k \leq C\,\min_{\partial B_r} v_k 
\]
where $C$ is universal. The conclusion then follows from Lemma \ref{lem0.1}.
\end{proof}

\begin{proof}[Proof of Proposition \ref{prop-C16-1new}]
Fix $\epsilon > 0$. In view of Lemma \ref{lem-energy1} (cf. \eqref{U00}), we only need to prove that there exist $\daste > 0$ such that, for all sufficiently large $k$,
\begin{equation}
v_k(y)\le (1+2\epsilon)U(y), \qquad \forall\ |y|\le \daste R_k.
\label{C17-1}
\end{equation}
Suppose the contrary of the above, then, after passing to a subsequence 
and renaming the subsequence still as $\{v_k\}$ and $\{R_k\}$,
there exist $|y_k|=\delta_k R_k$, $\delta_k\to 0^+$,
such that
\begin{equation}
v_k(y_k)=\max_{ |y|=\delta_k R_k}
v_k(y)> (1+2\epsilon) U(y_k).
\label{C17-3}
\end{equation}
In view of the convergence of $v_k$ to $U$,
 $|y_k|\to \infty$ as $k\to \infty$.

Consider the following two rescalings of $v_k$:
\begin{equation}
\hat v_k(z):= |y_k|^{n-2} 
v_k(|y_k|z) \text{ and } \bar v_k(z) = |y_k|^{\frac{n-2}{2}}v_k(|y_k|z),\qquad
|z|<\frac {  R_k}{ |y_k|}\to \infty.
\label{C18-2}
\end{equation}
By Lemma \ref{lem-upperbound}, we have
\begin{equation}
\hat v_k(z)\le C|z|^{2-n} \text{ and } \bar v_k(z) \leq C\,|y_k|^{-\frac{n-2}{2}} |z|^{2-n}
	\label{Eq:hatbarvkUB}
\end{equation}
for some constant $C$ independent of $k$.

In view of the conformal invariance \eqref{Eq:CIProp} and \eqref{ab1new}, 
\[
f(\lambda(A^{\bar v_k}(z))) = 1 \text{ for } |z|< \frac {  R_k}{ |y_k|}.
\]
Recalling \eqref{Eq:hatbarvkUB}, we can apply Theorem \ref{TheoremB} to obtain that
for all $0<\alpha<\beta <\infty$, there exists
positive constant $C(\alpha, \beta)$ such that
for large $k$,
\begin{equation}
|\nabla \ln \bar v_k(z)|\le C(\alpha, \beta),\qquad
\forall \ \alpha<|z|<\beta,
\label{C22-2X}
\end{equation}
which implies that
\begin{equation}
|\nabla \ln \hat v_k(z)|\le C(\alpha, \beta),\qquad
\forall \ \alpha<|z|<\beta.
\label{C22-2}
\end{equation}

We know from \eqref{C18-2},  \eqref{C17-3} and
Lemma \ref{lem0.1} that
\begin{equation}
\min_{ |z|=1} \hat v_k(z)\le 
(1+ \epsilon)\frac{|y_k|^{n-2}}{U(y_k)},
\label{C19-2}
\end{equation}
and
\begin{equation}
\max_{ |z|=1} \hat v_k(z)\ge (1+ 2\epsilon)\frac{|y_k|^{n-2}}{U(y_k)}.
\label{C20-1}
\end{equation}

We deduce from \eqref{C22-2}, \eqref{C19-2} and \eqref{C20-1}, after passing to a subsequence,
that for some positive function
$\hat v^*$ in $C^{0,1}_{loc}(\RR^n\setminus\{0\})$,
\begin{equation}
\hat v_k\to \hat v^*\qquad \mbox{in}\ C^{\alpha}_{loc}(\RR^n\setminus
\{0\}),\ \forall\ 0<\alpha<1.
\label{C23-1}
\end{equation}
By Theorem \ref{proposition1-deg}, $\hat v^*$ is radially symmetric.
On the other hand, we  deduce from \eqref{C19-2} and
\eqref{C20-1} after passing to limit that
\begin{equation}
\min_{ |z|=1} \hat v^*(z)\le 
1+\epsilon,
\quad\mbox{and}\quad
\max_{ |z|=1} \hat v^*(z)\ge 1+2\epsilon.
\label{C20-1new}
\end{equation}
The above violates the radial symmetry of $\hat v^*$.
 Proposition \ref{prop-C16-1new} is established.
\end{proof}

\subsection{Detailed blow-up landscape}

The proof of Theorem \ref{theorem4X} uses the following consequence of the Harnack-type inequality for conformally invariant equations, see \cite{SchoenNotes, ChenLin, LiLi03}. 

\begin{lem}\label{Lem:EBnd}
Let $(f,\Gamma)$ satisfy \eqref{01weak}-\eqref{0304weak} and \eqref{FU-1a}. There exists a constant $C_6$, depending only on $(f,\Gamma)$, such that if $u \in C^2(B_{3}(0))$ is a positive solution of
\[
f(\lambda(A^u)) = 1 \text{ in } B_{3}(0)
\]
then 
\[
\int_{B_1(0)} |u|^{\frac{2n}{n-2}}\,dx \leq C_6.
\]
\end{lem}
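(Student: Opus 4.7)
The plan is to derive the lemma by combining two ingredients: (i) the Harnack-type inequality for conformally invariant equations, and (ii) the super-harmonic inequality $-\Delta u \geq \tfrac{n-2}{2}\delta\,u^{(n+2)/(n-2)}$ on $B_3(0)$, which follows from \eqref{FU-1a} via $\mathrm{Tr}(A^u) = \sum_i \lambda_i(A^u) \geq \delta$.

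For ingredient (i), I would record the Harnack-type inequality in the following form: there exists $C^* = C^*(f,\Gamma) > 0$ such that every positive $C^2$ solution $v$ of $f(\lambda(A^v)) = 1$ on $B_{3r}(x_0)$ satisfies
\[
\bigl(\sup_{B_r(x_0)} v\bigr)\bigl(\inf_{B_r(x_0)} v\bigr) \leq C^*\, r^{-(n-2)}.
\]
For $(f,\Gamma) = (\sigma_1/(2n),\Gamma_1)$ this is Schoen \cite{SchoenNotes}; for $-\Delta u = K u^{(n+2)/(n-2)}$ it is Chen--Lin \cite{ChenLin}; for general conformally invariant fully nonlinear equations it is in Li--Li \cite{LiLi03}. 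The same contradiction argument works under \eqref{01weak}--\eqref{0304weak}, \eqref{FU-1a}: a putative failure along a sequence would, via Schoen's selection at a maximizer of $d(x,\partial B_{3r}(x_0))^{(n-2)/2}\,v(x)$ and rescaling by the height at that point, produce solutions on expanding balls whose limit must be the standard bubble $U$ by Theorem \ref{TheoremB} together with Theorem \ref{proposition1}; the moving-spheres argument of Section \ref{sec:Tp1} then yields the needed contradiction. Applying this at $(x_0,r)=(0,1)$ gives $(\sup_{B_1(0)} u)(\inf_{B_1(0)} u) \leq C^*$.

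For the deduction, let $G = G_{B_2(0)}$ be the Dirichlet Green function for $-\Delta$ on $B_2(0)$. Since $-\Delta u \geq \tfrac{n-2}{2}\delta\,u^{(n+2)/(n-2)}$ on $B_2(0)$ and $u>0$ on $\partial B_2(0)$, the representation formula for super-harmonic functions gives, for each $x \in B_1(0)$,
\[
u(x) \geq \tfrac{n-2}{2}\delta\int_{B_{3/2}(0)} G(x,y)\,u(y)^{(n+2)/(n-2)}\,dy.
\]
On the compact set $\bar B_1(0) \times \bar B_{3/2}(0)$ the Green function is bounded below by a positive constant $c_1(n) > 0$ (the diagonal contributes $+\infty$ and is harmless), so $\inf_{B_1(0)} u \geq c_2(n,\delta)\int_{B_{3/2}(0)} u^{(n+2)/(n-2)}\,dy$. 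Combining with the Harnack bound yields $(\sup_{B_1(0)} u)\int_{B_{3/2}(0)} u^{(n+2)/(n-2)}\,dy \leq C'(f,\Gamma)$, and then, using $u^{2n/(n-2)} = u\cdot u^{(n+2)/(n-2)}$ and $B_1(0)\subset B_{3/2}(0)$,
\[
\int_{B_1(0)} u^{2n/(n-2)}\,dx \leq \bigl(\sup_{B_1(0)} u\bigr)\int_{B_{3/2}(0)} u^{(n+2)/(n-2)}\,dx \leq C'(f,\Gamma),
\]
which is the desired bound with $C_6 = C'(f,\Gamma)$. The main technical obstacle is establishing the Harnack inequality itself in the fully nonlinear setting; once it is in hand, the conversion to the $L^{2n/(n-2)}$-bound is the short Green-function computation above.
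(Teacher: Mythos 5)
Your proposal is correct and follows essentially the same approach as the paper: both proofs combine (a) a Green-function/superharmonicity lower bound on $\inf u$ in terms of $\int u^{\frac{n+2}{n-2}}$ (the paper uses its Corollary \ref{cor-App1-3X} to bound $\inf_{B_2}u$, while you bound $\inf_{B_1}u$ directly, a harmless variation) with (b) the Harnack-type inequality $\sup u\cdot\inf u\le C$ from \cite[Theorem 1.2]{LiLi05}, and then write $\int_{B_1}u^{\frac{2n}{n-2}}\le\sup_{B_1}u\cdot\int u^{\frac{n+2}{n-2}}$.
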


\begin{proof} We give the proof here for completeness. By \eqref{FU-1a}, 
\[
-\Delta u \geq \frac{n-2}{2}\delta\,u^{\frac{n+2}{n-2}} > 0 \text{ in } B_2(0).
\]
Thus, by Corollary \ref{cor-App1-3X} in Appendix \ref{Sec:AppA} as well as the maximum principle,
\[
\inf_{B_{2}(0)} u = \inf_{B_{2}(0) \setminus B_{3/2}(0)} u \geq \frac{1}{C} \int_{B_1(0)} u^{\frac{n+2}{n-2}}\,dx.
\]
It follows that
\[
\int_{B_1(0)} u^{\frac{2n}{n-2}}\,dx \leq C\sup_{B_1(0)} u \,\inf_{B_{2}(0)} u.
\]
The conclusion follows from the above estimate and the Harnack-type inequality \cite[Theorem 1.2]{LiLi05}. (Note that \eqref{FU-1a} is used again here.)
\end{proof}

\begin{proof}[Proof of Theorem \ref{theorem4X}]
In view of Proposition \ref{prop:BbUp1} and (vi), it suffices to establish the theorem for $\epsilon = \epsilon_0 := 1/2$.

By Lemma \ref{Lem:EBnd},
\begin{equation}
\int_{B_2(0)} u^{\frac{2n}{n-2}} \,dx \leq C_6.
	\label{Eq:EBnd}
\end{equation}
The constant $\bar m$ in the result can be selected to be the least integer satisfying
\[
\bar m \geq 2C_6 \Big(\int_{B_1} U^{\frac{2n}{n-2}}\,dx\Big)^{-1}.
\]
(Clearly, this is an obvious upper bound for $m$ if the $x^i$'s satisfies (iii).)

Let $\delta_3$ and $C_3$ be the constants in Lemma \ref{lem:Ressmallenergy}. Fix some $N_0 > \frac{C_1}{\delta_3}$. 
Then there is some $r_0 \in (3/2, 2)$ such that
\[
\int_{r_0 < |x| < r_0 + \frac{1}{2N_0} } u^{\frac{2n}{n-2}}\,dx \leq  \delta_3.
\]
By Lemma \ref{lem:Ressmallenergy}, this implies that
\begin{equation}
u(x) \leq C_3\,(8N_0)^{\frac{n-2}{2}} =: C_{7} \text{ for all } r_0 + \frac{1}{8N_0} < |x| < r_0 + \frac{3}{8N_0}.
	\label{Eq:G0Barrier}
\end{equation}

Let $\Caste$ and $\daste$ be as in Proposition \ref{prop:BbUp1} (corresponding to $\epsilon = \epsilon_0$). We can assume without loss of generality that 
\begin{equation}
\Caste > 2 \text{ and }\daste < 1.
\label{Eq:PreRC*d*}
\end{equation}
We now declare
\begin{equation}
\Caspr = \max\Big(2C_7, \Caste (2\daste^{-1})^{\frac{\bar m(n-2)}{2}}(4N_0)^{-\frac{n-2}{2}}\Big).
	\label{Eq:C*'Choice}
\end{equation}
This choice of $\Caspr$ will become clear momentarily.

Let $U_1 = B_{r_0 + \frac{3}{8N_0}}(0)$ and $V_1 = B_{r_0 + \frac{1}{8N_0}}(0) \subset U_0$. By \eqref{Eq:C*'Choice}, $\Caspr \geq 2C_7$, and so, by \eqref{Eq:G0Barrier}, there is some $x^1 \in V_1 \subset B_2(0)$ such that
\[
u(x^1) = \sup_{U_1} u \geq \Caspr.
\]

Let $R_1 = \frac{1}{4N_0}$, then \eqref{Eq:C*'Choice} gives
\[
\Caspr \geq \Caste\,R_1^{-\frac{n-2}{2}}.
\]
Hence, an application of Proposition \ref{prop:BbUp1} to $u$ on the ball $B_{R_1}(x^1)$ leads to
\[
|u(x) - U^{x^1, u(x^1)}(x)|
\le \epsilon_0\,U^{x^1, u(x^1)}(x),
\ \ \forall \ x\in B_{\daste R_1}(x^1).
\]
In particular, for $\frac{\daste R_1}{2} \leq |x - x^1| \leq \daste\,R_1$, 
\begin{equation}
u(x) \leq 2U^{x^1, u(x^1)}(x) \leq \frac{2}{u(x^1) |x - x^1|^{n-2}} \leq \frac{2^{n-1}}{\Caspr (\daste R_1)^{n-2}} \leq \frac{\Caspr}{2},
	\label{Eq:G1Barrier}
\end{equation}
where we have used \eqref{Eq:C*'Choice} in the last estimate.

Let $U_2 = U_1 \setminus B_{\daste\,R_1/2}(x^1)$ and $V_2 = V_1 \setminus B_{\daste R_1}(x^1) \subset U_1$. If 
\[
\sup_{U_2} u \leq \Caspr,
\]
we stop. Otherwise, in view of \eqref{Eq:G1Barrier}, there is some $x^2 \in V_2$ such that
\[
u(x^2) = \sup_{U_2} u \geq \Caspr.
\]
We then let $R_2 = \frac{\daste R_1}{2}$ so that \eqref{Eq:C*'Choice} implies
\[
\Caspr \geq \Caste\,R_2^{-\frac{n-2}{2}}.
\] 
Hence, by Proposition \ref{prop:BbUp1},
\[
|u(x) - U^{x^2, u(x^2)}(x)|
\le \epsilon_0 U^{x^2, u(x^2)}(x),
\ \ \forall \ x\in B_{\daste R_2}(x^2).
\]
We then repeat the above process to define $U_3$, $V_3$, and to decide if a local max $x^3$ can be selected in $V_3$, etc. As explain above, the number $m$ of times this process can be repeated cannot exceed $\bar m$.

We have obtained the set of local maximum points $\{x^1, \cdots, x^m\}$ of $u$ and have verified (i) and (iv) for
\[
\daspr =  \Big(\frac{\daste}{2}\Big)^{\bar m} \frac{1}{2N_0} \leq \daste\,R_{m}.
\]
(vi) is readily seen as
\[
dist(x^i, \partial U_i) \geq R_i \geq \daspr.
\]
(ii) is also clear for
\[
\Cfive \geq  \Big(\frac{2}{\daste}\Big)^{\bar m} 4N_0 \geq \frac{2}{\daste\,R_{m-1}}.
\]
From construction, we have
\[
\sup_{U_{m+1}} u \leq \Caspr.
\]
By Theorem \ref{TheoremB}, this implies that
\begin{equation}
|\nabla \ln u(x)| \leq C_8 \text{ for all } x \in V_m = V_0 \setminus \cup_{i = 1}^m B_{\daste R_i}(x^i).
	\label{Eq:HarOutsideCore}
\end{equation}
Also, note that, for $\daspr < |x - x^i| < \daste R_i$, we have
\[
\frac{1}{u(x^i)} \Big(\frac{1}{(\Caspr)^{-\frac{4}{n-2}} + (\daste\,R_i)^2.}\Big)^{\frac{n-2}{2}}
	\leq U^{x^i, u(x^i)}(x) \leq \frac{1}{u(x^i)} (\daspr)^{-(n-2)} , 
\]
and so 
\begin{equation}
\frac{1}{C_9u(x^i)} \leq u(x) \leq \frac{C_9}{u(x^i)} (\Caspr)^{-2}\,(\daspr)^{-(n-2)}
	\label{Eq:HarMargin}
\end{equation}
It is now clear that (iii) and (v) hold for $\Cfive$ sufficiently large. The proof is complete.
\end{proof}

\section{A quantitative Liouville theorem}\label{sec:tQL}

\begin{proof}[Proof of Theorem \ref{quantitativeliouville}]
Assume by contradiction that, for some $\epsilon \in (0,1/2]$, there exist $v_k \in C^2(B_{3R_k}(0)$, $R_k \rightarrow \infty$, such that $f(\lambda(A^{v_k})) = 1$ in $B_{3R_k}(0)$ and $v_k \geq \gamma$ in $B_{r_1}(0)$ but, for each $k$,
\begin{equation}
\text{\eqref{AppB3} and \eqref{AppB4} cannot simultaneously hold for any $\bar x$.}
\label{Eq:QLCAs}
\end{equation}

Define
\[
u_k(y) = R_k^{\frac{n-2}{2}}v_k(R_k\,y) \text{ for } |y| \leq 3.
\]
Then $f(\lambda(A^{u_k})) = 1$ in $B_3(0)$ and
\begin{equation}
u_k \geq R_k^{\frac{n-2}{2}} \gamma \text{ in } B_{r_1/R_k}(0). 
	\label{Eq:ukNondeg}
\end{equation}
Thus, by applying Theorem \ref{theorem4} and after passing to a subsequence, we can select sets of local maximum points $\{x_k^1, \cdots, x_k^m\}$ of $u_k$ such that assertions (i)-(vi) in Theorem \ref{theorem4} hold. We can also assume that $x_k^i \rightarrow x_*^i$, $1 \leq i \leq m$.

By assertions (i), (iv) and (v) of Theorem \ref{theorem4}, $u_k$ converges locally uniformly to zero in $B_1 \setminus \{x_*^1, \cdots, x_*^m\}$. Thus, in view of \eqref{Eq:ukNondeg}, we must have $x_*^{i_0} = 0$ for some (unique) $1 \leq i_0 \leq m$. Clearly, $B_{r_1/R_k}(0) \subset B_{\daspr}(x_k^{i_0})$ for large $k$.

Recalling assertions (iv), (vi) and returning to the original sequence $v_k$ we get, for $\bar x_k = R_k\,x_k^{i_0}$ and $\bar\mu_k = v_k(\bar x_k) = \sup_{B_{\daspr R_k}(\bar x_k)} v_k$, that $B_{r_1}(0) \subset B_{\daspr R_k}(\bar x_k)$, $\bar\mu_k \geq \gamma$ and
\[
(1 - \epsilon) U^{\bar x_k, \bar\mu_k} \leq v_k \leq (1 + \epsilon) U^{\bar x_k, \bar\mu_k} \text{ in } B_{\daspr R_k}(\bar x_k).
\]
We then have
\[
\gamma \leq v_k(0) = 2\Big(\frac{\bar \mu_k^{\frac{2}{n-2}}}{1 + \bar\mu_k^{\frac{4}{n-2}}|\bar x_k|^2}\Big)^{\frac{n-2}{2}}
\leq 
\frac{2}{\bar\mu_k\,|\bar x_k|^{n-2}}  \leq \frac{2}{\gamma\,|\bar x_k|^{n-2}} 
\]
This implies 
\[
|\bar x_k| \leq 2^{\frac{1}{n-2}}\,\gamma^{-\frac{2}{n-2}}.
\]
On the other hand, since $B_{r_1}(0) \setminus B_{r_1/2}(\bar x_k) \neq \emptyset$, we can select some $y_k \in B_{r_1}(0)$ such that
\[
|\bar x_k - y_k| \geq \frac{r_1}{2}.
\]
This implies that
\[
\gamma \leq v(y_k) \leq 2U^{\bar x_k, \bar\mu_k}(y_k) \leq \frac{2}{\bar\mu_k\,|y_k - \bar x_k|^{n-2}} \leq \frac{2^{n-1}}{\bar \mu_k\,r_1^{n-2}},
\]
and so
\[
\bar\mu_k \leq \frac{2^{n-1}}{\gamma\,r_1^{n-2}}.
\]
We have thus shown that $\bar x = \bar x_k$ satisfies both \eqref{AppB3} and \eqref{AppB4}, which contradicts \eqref{Eq:QLCAs}.
\end{proof}

\appendix

\section{A remark on positive superharmonic functions}
\label{Sec:AppA}

Let $G(x,y)$ be the Green's function of $-\Delta$ in $B_1\setminus B_\rho\subset \RR^n$,  $0<\rho <1/2$:
$$
G(x,y):= \frac 1{ n(n-2) \alpha(n) }
\left(  \frac 1{  |x-y|^{n-2} }
-h(x,y)\right), \qquad x,y\in B_1\setminus B_\rho,,
$$
where
$\alpha(n)$ denotes the
volume of the unit ball in $\RR^n$,
and
 $h(x,y)$ satisfies, for $x\in B_1\setminus B_\rho$,
$$
\left\{
\begin{array}{rlr}
-\Delta_y h(x,y)=& 0, & y\in B_1\setminus B_\rho,\\
h(x,y)=& \frac 1{  |x-y|^{n-2} }, & y\in \partial (B_1\setminus B_\rho).
\end{array}
\right.
$$

\begin{lem}\label{lem-App1}  For any $0<2\rho< \rho_0<\rho_1<\rho_2<1$,
there exists some constants $C, C'>1$, depending only on $n, \rho_0,
\rho_1, \rho_2$, such that the Green's function $G$
for $B_1\setminus B_\rho$ satisfies
$$
G(x, y)\ge  \frac 1C \left(  1-\frac {\rho^{n-2}}  { |y|^{n-2} }\right),
\qquad \forall\ \rho\le |y|\le \rho_0,
\quad  \rho_1\le |x|\le \rho_2.
$$
Consequently,
$$
G(x,y)\ge 1/C',\qquad \forall\ 2\rho\le |y|\le \rho_0, \rho_1\le |x|\le \rho_2.
$$
\end{lem}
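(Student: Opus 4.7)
The natural comparison function is the radial harmonic function
\[
\phi(y) = 1 - \frac{\rho^{n-2}}{|y|^{n-2}},
\]
which vanishes on $\partial B_\rho$ and satisfies $0\le \phi \le 1$ on $\bar B_1\setminus B_\rho$. Fix $x$ with $\rho_1\le |x|\le\rho_2$. On the subannulus $A := \{\rho\le |y|\le\rho_0\}$, both $G(x,\cdot)$ and $\phi$ are harmonic (as $|x-y|\ge \rho_1-\rho_0>0$ there) and both vanish on $|y|=\rho$. The plan is to establish that
\[
m := \inf\bigl\{G(x,y) : \rho_1\le |x|\le\rho_2,\ |y|=\rho_0\bigr\} \ge 1/C
\]
for some $C=C(n,\rho_0,\rho_1,\rho_2)$. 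Once this is done, the minimum principle applied to $G(x,\cdot)-m\phi$ on $A$ yields $G(x,y)\ge m\phi(y)$ on $A$, since $m\phi\le m\le G(x,y)$ on the outer boundary $|y|=\rho_0$ and both sides vanish on $|y|=\rho$. This is exactly the first inequality in the lemma.

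The key step is therefore the uniform lower bound on $m$. For this I will invoke domain monotonicity of the Dirichlet Green's function: if $\Omega'\subset\Omega$ are bounded domains, then $G_{\Omega'}\le G_{\Omega}$ on $\Omega'$, the standard proof observing that $G_\Omega - G_{\Omega'}$ extends harmonically across $x$ (the singular parts cancel) and is nonnegative on $\partial\Omega'$. Applied with $\Omega=B_1\setminus B_\rho$ and $\Omega'=B_1\setminus B_{\rho_0/2}$ (valid because $2\rho<\rho_0$), this gives $G(x,y)\ge G_*(x,y)$, where $G_*$ denotes the Green's function of the \emph{fixed} annulus $B_1\setminus B_{\rho_0/2}$, independent of $\rho$. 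The set $\{(x,y):\rho_1\le|x|\le\rho_2,\ |y|=\rho_0\}$ is compact and disjoint from the diagonal, so $G_*$ is continuous and strictly positive on it, hence attains a positive minimum $c_1=c_1(n,\rho_0,\rho_1,\rho_2)>0$. Thus $m\ge c_1$ uniformly in $\rho\in(0,\rho_0/2)$, which closes the argument with $C=1/c_1$.

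The second assertion is immediate: for $|y|\ge 2\rho$ one has $(\rho/|y|)^{n-2}\le 2^{-(n-2)}$, so the first inequality yields $G(x,y)\ge (1-2^{-(n-2)})/C$, and one takes $C'=C/(1-2^{-(n-2)})$. The single subtlety I expect is ensuring that the lower bound on $m$ is independent of $\rho$; all of that difficulty is absorbed by reducing to the $\rho$-independent domain $B_1\setminus B_{\rho_0/2}$ via domain monotonicity, after which the estimate becomes a routine compactness statement.
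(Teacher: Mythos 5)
Your proof is correct, but it takes a genuinely different route from the paper's. The paper's argument bounds the regular part $h(x,\cdot)$ of the Green's function via the maximum principle (uniformly in $\rho$ because the boundary data $|x-y|^{2-n}$ stays bounded for $\rho_1\le|x|\le\rho_2$), uses this to get a lower bound $G(x,y)\ge 1/C_2$ on a small ball around $x$, propagates this lower bound to the sphere $|y|=\rho_0$ via the Harnack inequality for the positive harmonic function $G(x,\cdot)$ on $(B_1\setminus B_\rho)\setminus\{x\}$, and then finishes with the same comparison against $1-\rho^{n-2}/|y|^{n-2}$ in the subannulus $\{\rho<|y|\le\rho_0\}$ that you use. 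Your version instead establishes the lower bound on the sphere $|y|=\rho_0$ by domain monotonicity of the Dirichlet Green's function --- comparing against the Green's function of the $\rho$-independent annulus $B_1\setminus B_{\rho_0/2}$ (valid because $2\rho<\rho_0$) --- and then a compactness argument. Both arguments correctly absorb the $\rho$-dependence. The paper's route is more self-contained, relying only on the maximum principle and the Harnack inequality; yours is arguably more transparent about where the $\rho$-uniformity comes from, but invokes domain monotonicity as a known fact (your one-line justification of it is correct but should be supplemented if the target readership does not take it as standard). The final comparison step in the subannulus and the deduction of the second inequality from the first are essentially identical in both.
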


\begin{proof} In the following we use $C_1, C_2, ...$
to denote positive constants
depending only on $\rho_0, \rho_1, \rho_2$ and $n$. 
For a fixed $x$ satisfying $\rho_1\le |x|\le \rho_2$,
it follows from the maximum principle that
for  some positive constant $C_1$, 
$$
0<h(x, y)\le C_1, \quad \forall y\in B_1\setminus B_\rho.
$$ 
It follows that for some positive constants $C_2$ and $C_3$,
$$
G(x,y)\ge |x-y|^{2-n}-C_1\ge 1/C_2,\qquad
\forall\ y\in B_{ 2/C_3}(x)\setminus \{x\}
\subset B_1\setminus B_{\rho_0}.
$$ 
Since
 $G(x, y)$ is 
a positive harmonic function of $y$ in 
$(B_1\setminus B_\rho)\setminus \{x\}$.
We can apply the Harnack inequality to obtain, for some $C_4$,
$$
G(x,y)\ge 1/C_4,\qquad \forall\   |y|=\rho_0.
$$
By the maximum principle,
$$
G(x,y)\ge 
\frac 1{C_4} \left(  1-\frac {\rho^{n-2}}  { |y|^{n-2} }\right),
\qquad \rho<|y|\le \rho_0.
$$
It follows that for some $\Cfive$,
$$
G(x,y)\ge 1/\Cfive,\qquad 2\rho\le |y|\le \rho_0.
$$
Lemma \ref{lem-App1} is established.
\end{proof}

\begin{cor}  For any $0<2\rho< \rho_0<\rho_1<\rho_2<1$,
let
 $$
\left\{
\begin{array}{rl}
-\Delta \tilde v= \tilde f\ge & 0,\qquad B_1\setminus B_\rho,\\
\tilde v\ge &0, \qquad \partial (B_1\setminus B_\rho).
\end{array}
\right.
$$
Then, for  some constants $C, C'>1$ depending only on $n, \rho_0,
\rho_1, \rho_2$, 
$$
\inf_{ \rho_1\le |x|\le \rho_2 }
\tilde v(x)\ge 
\frac 1C \int_{ \rho\le |y|\le \rho_0}
 \left(  1-\frac {\rho^{n-2}}  { |y|^{n-2} }\right)
\tilde f(y)dy
\ge \frac 1{C'}
 \int_{ 2\rho\le |y|\le \rho_0}
\tilde f(y)dy.
$$
\label{cor-App1-3}
\end{cor}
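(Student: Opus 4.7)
The plan is to combine the Green's function lower bound from Lemma \ref{lem-App1} with the standard Green's function representation of $\tilde v$. First I would decompose $\tilde v = \tilde v_1 + \tilde v_2$ on $B_1 \setminus B_\rho$, where $\tilde v_1$ solves $-\Delta \tilde v_1 = \tilde f$ in $B_1 \setminus B_\rho$ with zero Dirichlet boundary data, and $\tilde v_2$ is harmonic in $B_1 \setminus B_\rho$ with boundary data equal to $\tilde v$ on $\partial(B_1 \setminus B_\rho)$. Since $\tilde f \geq 0$ and $\tilde v \geq 0$ on the boundary, the maximum principle gives $\tilde v_2 \geq 0$ in $B_1 \setminus B_\rho$, so $\tilde v \geq \tilde v_1$.

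Next, I would write the Green's function representation
\[
\tilde v_1(x) = \int_{B_1 \setminus B_\rho} G(x,y)\,\tilde f(y)\,dy \geq \int_{\rho \leq |y| \leq \rho_0} G(x,y)\,\tilde f(y)\,dy,
\]
where the inequality uses $G \geq 0$ (maximum principle for $G$) and $\tilde f \geq 0$. Then for $x$ with $\rho_1 \leq |x| \leq \rho_2$, Lemma \ref{lem-App1} gives
\[
G(x,y) \geq \frac{1}{C}\Big(1 - \frac{\rho^{n-2}}{|y|^{n-2}}\Big) \qquad \text{for } \rho \leq |y| \leq \rho_0,
\]
with $C = C(n,\rho_0,\rho_1,\rho_2)$. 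Plugging in yields the first asserted lower bound.

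For the second inequality, I would simply observe that when $|y| \geq 2\rho$, we have $\rho^{n-2}/|y|^{n-2} \leq 2^{-(n-2)}$, hence $1 - \rho^{n-2}/|y|^{n-2} \geq 1 - 2^{-(n-2)}$, a positive constant depending only on $n$. Restricting the integral on the right-hand side of the first bound to the sub-annulus $2\rho \leq |y| \leq \rho_0$ and using this pointwise estimate gives the stated lower bound with $C' = C/(1 - 2^{-(n-2)})$.

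There is no real obstacle here: the only subtlety is the decomposition ensuring $\tilde v \geq \tilde v_1$, which is immediate from the maximum principle applied to $\tilde v_2$; everything else is a direct application of Lemma \ref{lem-App1} and the trivial numerical estimate on the factor $1 - \rho^{n-2}/|y|^{n-2}$ in the thinner annulus.
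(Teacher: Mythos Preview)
Your proof is correct and follows essentially the same approach as the paper: Green's representation combined with the lower bound of Lemma \ref{lem-App1}, followed by the elementary estimate $1-\rho^{n-2}/|y|^{n-2}\ge 1-2^{-(n-2)}$ on $|y|\ge 2\rho$. If anything, your decomposition $\tilde v=\tilde v_1+\tilde v_2$ is more careful than the paper's terse argument, which writes the Green identity as an equality and thus implicitly absorbs the nonnegative harmonic boundary term.
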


\begin{proof}
For $\rho_1\le |x|\le \rho_2$,
we use the Green's formula to obtain
$$
\tilde v (x)=\int_{ B_1\setminus B_\rho}
G(x,y)\tilde f(y).
$$
Corrollary \ref{cor-App1-3} follows from Lemma \ref{lem-App1}.
\end{proof}

\begin{cor}\label{cor-App1-3X}  For any $0 < \rho_0<\rho_1<\rho_2<1$,
let
 $$
\left\{
\begin{array}{rl}
-\Delta \tilde v= \tilde f\ge & 0,\qquad B_1,\\
\tilde v\ge &0, \qquad \partial B_1.
\end{array}
\right.
$$
Then, for  some constants $C$ depending only on $n, \rho_0,
\rho_1, \rho_2$, 
$$
\inf_{ \rho_1\le |x|\le \rho_2 }
\tilde v(x)
\ge \frac 1{C}
 \int_{ |y|\le \rho_0}
\tilde f(y)dy.
$$
\end{cor}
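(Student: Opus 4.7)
My plan is to derive Corollary \ref{cor-App1-3X} from the previous Corollary \ref{cor-App1-3} by an approximation argument, letting the inner radius $\rho$ shrink to $0$.

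First, I would observe that $\tilde v \geq 0$ on all of $B_1$: since $-\Delta \tilde v = \tilde f \geq 0$, $\tilde v$ is superharmonic, and combined with the boundary condition $\tilde v \geq 0$ on $\partial B_1$, the maximum principle (applied to $-\tilde v$ as a subharmonic function) gives $\tilde v \geq 0$ throughout $B_1$. In particular, for any $\rho \in (0, \rho_0/4)$, we have $\tilde v \geq 0$ on $\partial B_\rho$, so $\tilde v \geq 0$ on $\partial(B_1 \setminus B_\rho)$.

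Next, I would apply Corollary \ref{cor-App1-3} to $\tilde v$ viewed as a solution on the annulus $B_1 \setminus B_\rho$. This yields, for each such $\rho$,
\[
\inf_{\rho_1 \le |x| \le \rho_2} \tilde v(x) \ge \frac{1}{C'} \int_{2\rho \le |y| \le \rho_0} \tilde f(y)\,dy.
\]
The crux here is that the constant $C'$ can be chosen independent of $\rho$ for all sufficiently small $\rho$. Inspecting the proof of Lemma \ref{lem-App1}, the constant $C_1$ comes from a uniform bound on the harmonic function $h(x,y)$, whose boundary data $|x-y|^{2-n}$ on $\partial B_1 \cup \partial B_\rho$ is bounded by $\max\{(1-\rho_2)^{2-n}, (\rho_1 - \rho)^{2-n}\}$, and the latter is bounded by $(\rho_1/2)^{2-n}$ whenever $\rho \le \rho_1/2$. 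Thus all subsequent constants depend only on $n, \rho_0, \rho_1, \rho_2$, and not on $\rho$.

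Finally, I would send $\rho \to 0^+$. Since $\tilde f \ge 0$ and the indicator of $\{2\rho \le |y| \le \rho_0\}$ increases monotonically to the indicator of $\{|y| \le \rho_0\}$ (up to the measure-zero set $\{0\}$), the monotone convergence theorem gives
\[
\int_{2\rho \le |y| \le \rho_0} \tilde f(y)\,dy \longrightarrow \int_{|y|\le \rho_0} \tilde f(y)\,dy.
\]
The left-hand side of the inequality is independent of $\rho$, so the desired estimate follows with $C = C'$. The only genuine point requiring care is the $\rho$-independence of $C'$ addressed above; everything else is routine.
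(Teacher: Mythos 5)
Your proof is correct and takes exactly the same route as the paper, which states simply that the corollary follows from Corollary \ref{cor-App1-3} by sending $\rho\to 0$. Your elaboration---checking that $\tilde v\ge 0$ on $\partial B_\rho$ via the minimum principle, noting the $\rho$-independence of the constant (which is in fact already asserted in the statement of Lemma \ref{lem-App1}), and invoking monotone convergence---just supplies the omitted details of that one-line argument.
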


\begin{proof} This follows from Corollary \ref{cor-App1-3} by sending $\rho \rightarrow 0$.
\end{proof}

\begin{cor}\label{cor-App1-4}
For $0<r<R/2$, let 
 $$
\left\{
\begin{array}{rl}
-\Delta  v=  f\ge & 0,\qquad B_R\setminus B_r,\\
 v\ge &0, \qquad \partial (B_R\setminus B_r).
\end{array}
\right.
$$
Then, for  any $\frac {2r}R
<\rho_0<\rho_2<\rho_2 < 1$,
there exist some constants $C, C'>1$ depending only on $n, \rho_0,
\rho_1, \rho_2$,
\begin{align*}
\inf_{ \rho_1 R\le |x|\le \rho_2 R }
 v(x)
 	&\ge 
\frac 1{CR^{n-2} }
 \int_{  r \le |y|\le \rho_0 R}
 \left(  1-\frac {r^{n-2}}  { |y|^{n-2} }\right)
 f(y)dy\\
 	&\ge \frac 1{C'R^{n-2} }
 \int_{ 2r\le |y|\le \rho_0 R}
 f(y)dy.
\end{align*}

\end{cor}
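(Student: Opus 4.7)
The plan is to reduce this corollary to Corollary \ref{cor-App1-3} by a standard parabolic/elliptic rescaling that normalizes the outer radius to $1$. Set
\[
\tilde v(x) = v(Rx), \qquad \tilde f(x) = R^2 f(Rx), \qquad x \in B_1(0)\setminus B_\rho(0),
\]
where $\rho := r/R$. A direct computation gives $-\Delta \tilde v = \tilde f$ in $B_1 \setminus B_\rho$, and clearly $\tilde v \geq 0$ on $\partial(B_1 \setminus B_\rho)$. The hypothesis $2r/R < \rho_0$ translates exactly to $2\rho < \rho_0 < \rho_1 < \rho_2 < 1$, so the setup of Corollary \ref{cor-App1-3} is met.

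Apply Corollary \ref{cor-App1-3} to $\tilde v$ to obtain
\[
\inf_{\rho_1 \leq |x| \leq \rho_2} \tilde v(x) \geq \frac{1}{C} \int_{\rho \leq |y| \leq \rho_0} \Big(1 - \frac{\rho^{n-2}}{|y|^{n-2}}\Big)\tilde f(y)\,dy.
\]
Undoing the rescaling via $z = Rx$ on the left and $w = Ry$ on the right (so $dy = R^{-n}\,dw$, $\rho^{n-2}/|y|^{n-2} = r^{n-2}/|w|^{n-2}$, and $\tilde f(y)\,dy = R^2 f(w)\,R^{-n}\,dw = R^{2-n}f(w)\,dw$) yields
\[
\inf_{\rho_1 R \leq |z| \leq \rho_2 R} v(z) \geq \frac{1}{CR^{n-2}} \int_{r \leq |w| \leq \rho_0 R} \Big(1 - \frac{r^{n-2}}{|w|^{n-2}}\Big) f(w)\,dw,
\]
which is the first asserted inequality.

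For the second inequality, observe that on $\{|w| \geq 2r\}$ one has
\[
1 - \frac{r^{n-2}}{|w|^{n-2}} \geq 1 - \frac{1}{2^{n-2}} \geq \frac{1}{2} \qquad (n \geq 3),
\]
so restricting the integral to $\{2r \leq |w| \leq \rho_0 R\}$ and estimating the weight from below by $1/2$ produces the claimed $1/(C'R^{n-2})$ bound. There is no real obstacle; the only care needed is tracking the $R^{2-n}$ factor that emerges from the change of variables, which is precisely the $R^{n-2}$ appearing in the denominator of the final estimate.
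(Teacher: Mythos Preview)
Your proof is correct and follows essentially the same approach as the paper: both reduce to Corollary~\ref{cor-App1-3} via the rescaling $x \mapsto Rx$, $\rho = r/R$, with only a cosmetic difference in normalization (the paper takes $\tilde v(x) = R^{n-2}v(Rx)$, $\tilde f(x) = R^n f(Rx)$, which differs from your choice by an overall factor of $R^{n-2}$ and leads to the same conclusion). Your explicit derivation of the second inequality from the first is also fine, though the paper simply inherits it from the $C'$ bound already present in Corollary~\ref{cor-App1-3}.
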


\begin{proof}
Performing a change of variables
$$
\tilde v(x):= R^{n-2}  v(Rx),
\qquad \tilde f(x):= R^n f(Rx), \rho= r/R,
$$
we obtain Corollary \ref{cor-App1-4} from
Corollary \ref{cor-App1-3}
\end{proof}

\section{A remark on viscosity solutions}\label{Sec:AppVS}

In this section we consider the convergence of viscosity solutions in a slightly more general context. Let $\RR^{n \times n}$, $Sym^{n \times n}$, $Sym^{n \times n}_+$ denote the set of $n \times n$ matrices, symmetric matrices, and positive definite symmetric matrices, respectively. Let $\mathcal{M} = \RR^{n \times n}$ or $\mathcal{M} = Sym^{n \times n}$. Let $\Omega \subset \RR^n$, $U \subset \mathcal{M}$ be open, $F \in C(U)$, $A \in C(\Omega \times \RR \times \RR^n \times Sym^{n \times n}; \mathcal{M})$ and consider partial differential equations for the form
\[
F(A(x,u,\nabla u, \nabla^2 u)) = 0.
\]
To keep the notation simple, we will abbreviate $A[u] = A( \cdot,u,\nabla u, \nabla^2 u)$, and whenever we write $F(M)$, we implicitly assume that $M \in U$.

In applications, it is frequently assumed that
\begin{align}
&M + N \in U \text{ for } M \in U \text{ and } N \in Sym_+^{n\times n},\label{Eq:UMono}\\
&F(M + N) \geq F(M) \text{ for } M \in U \text{ and } N \in Sym_+^{n\times n},\label{Eq:FMono}\\
&A(x,z,p,M)- A(x,z,p,M + N)  \text{ is non-negative definite for } N \in Sym_+^{n \times n}.
	\label{Eq:AMono}
\end{align}

In the main body of the paper,  
\begin{equation}
\parbox{.8\textwidth}{$\mathcal{M} = Sym^{n \times n}$, $U$ is the set of symmetric matrices such that $\lambda(U) \in \Gamma$, $F(M) = f(\lambda(M)) - 1$, and $A[v] = A^v$.}
	\label{Eq:14IV2016-X1}
\end{equation}

The following definition is ``consistent'' with the assumptions \eqref{Eq:UMono}, \eqref{Eq:FMono} and \eqref{Eq:AMono} and with Definition \ref{Def:fVisSol}.

\begin{Def} A continuous function
 $v$ is an open set $\Omega\subset \RR^n$
is a viscosity supersolution (respectively, subsolution) of 
$$
F(A[v])=0, \ \ \mbox{in}\ \Omega,
$$
when the folowing holds: if $x_0\in \Omega$,
$\varphi\in C^2(\Omega)$, $(v-\varphi)(x_0)=0$,
and $v-\varphi\ge 0$ near $x_0$, then
$$
F(A[\varphi](x_0))\ge 0.
$$
(respectively, if $(v-\varphi)(x_0)=0$,
and $v-\varphi\le 0$ near $x_0$, then either 
$A[\varphi](x_0) \notin U$
or
$
F(A[\varphi](x_0))\le 0
$).  Equivalently, we write $F(A[v]) \geq 0$ (respectively,  $F(A[v]) \leq 0$) in $\Omega$ in the viscosity sense.

We say that $v$ is a viscosity solution if it is both
a viscosity supersolution and a viscosity subsolution.
\end{Def}

\begin{prop}\label{Prop:VisConv}
Let $\Omega \subset \RR^n$, $U \subset \mathcal{M}$ be open, $F \in C(U)$, $A \in C(\Omega \times \RR \times \RR^n \times Sym^{n \times n}; \mathcal{M})$ and assume that the structural conditions \eqref{Eq:UMono}, \eqref{Eq:FMono} and \eqref{Eq:AMono} are in effect.
\begin{enumerate}[(a)]
\item If $v_k$ satisfies $F(A[v_k]) \leq 0$ in $\Omega$ in the viscosity sense and if $v_k$ converges in $C^0_{loc}(\Omega)$ to $v$, then $v$ also satisfies $F(A[v]) \leq 0$ in $\Omega$ in the viscosity sense.
\item  Assume further that the closure of $F^{-1}([0,\infty))$ in $\mathcal{M}$ is a subset of $U$, i.e.
\begin{equation}
\text{ if $A_k \in U$, $F(A_k) \geq 0$ and $A_k \rightarrow A$ as $k \rightarrow \infty$, then $A \in U$.}
	\label{Eq:SuperClosed}
\end{equation}
If $v_k$ satisfies $F(A[v_k]) \geq 0$ in $\Omega$ in the viscosity sense and if $v_k$ converges in $C^0_{loc}(\Omega)$ to $v$, then $v$ also satisfies $F(A[v]) \geq 0$ in $\Omega$ in the viscosity sense.
\end{enumerate}
\end{prop}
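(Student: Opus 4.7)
The plan is to adapt the standard doubling-of-variables stability argument for viscosity solutions, accommodating the asymmetry between the two definitions (the subsolution condition has the escape clause ``$A[\varphi](x_0) \notin U$'', the supersolution one does not). In both parts, given a test function $\varphi \in C^2(\Omega)$ touching $v$ from the appropriate side at $x_0$, I would first sharpen the touching by setting $\tilde\varphi_\epsilon(x) = \varphi(x) \pm \epsilon|x-x_0|^2$ for small $\epsilon > 0$ (plus sign for the subsolution case, minus sign for the supersolution case), so that $x_0$ becomes a strict local extremum of $v - \tilde\varphi_\epsilon$ on a closed ball $\bar B_r(x_0) \subset \Omega$.

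The core step is to pull this extremum back to each $v_k$: since the extremum is strict and $v_k \to v$ uniformly on $\bar B_r(x_0)$, for large $k$ the function $v_k - \tilde\varphi_\epsilon$ attains an extremum of the same type at an interior point $x_k^\epsilon \in B_r(x_0)$, and $x_k^\epsilon \to x_0$ as $k \to \infty$. Shifting by $c_k := (v_k - \tilde\varphi_\epsilon)(x_k^\epsilon) \to 0$ produces an admissible test function $\psi_k := \tilde\varphi_\epsilon + c_k$ for $v_k$ at $x_k^\epsilon$, to which the viscosity property of $v_k$ applies. Note that $A[\psi_k](x_k^\epsilon) \to M_\epsilon := A(x_0, v(x_0), \nabla\varphi(x_0), \nabla^2\varphi(x_0) \pm 2\epsilon I)$ by continuity of $A$, and $M_\epsilon \to A[\varphi](x_0)$ as $\epsilon \to 0$.

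For part (a), each $k$ yields either $A[\psi_k](x_k^\epsilon) \notin U$ or $F(A[\psi_k](x_k^\epsilon)) \leq 0$. Letting $k \to \infty$, closedness of $\mathcal{M} \setminus U$ (since $U$ is open) together with continuity of $F$ on $U$ preserves the dichotomy in the limit: either $M_\epsilon \notin U$, or $M_\epsilon \in U$ and $F(M_\epsilon) \leq 0$. The same closedness-and-continuity combination applied to $\epsilon \to 0$ then yields the desired conclusion for $A[\varphi](x_0)$.

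For part (b), the parallel argument gives $F(A[\psi_k](x_k^\epsilon)) \geq 0$, implicitly with $A[\psi_k](x_k^\epsilon) \in U$. The main obstacle is that the limiting matrix could escape $U$ to $\partial U$ where $F$ is undefined; this is exactly the scenario ruled out by hypothesis \eqref{Eq:SuperClosed}. Invoking \eqref{Eq:SuperClosed} once at $k \to \infty$ and once more at $\epsilon \to 0$, combined with continuity of $F$, keeps the limiting matrix inside $U$ and preserves $F \geq 0$. I would note in passing that the structural conditions \eqref{Eq:UMono}, \eqref{Eq:FMono}, \eqref{Eq:AMono} are consistent with but not actually invoked by this stability argument; what is essential is continuity of $F$ and $A$, openness of $U$, and (for part (b)) the closedness hypothesis \eqref{Eq:SuperClosed}.
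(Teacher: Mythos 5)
Your proof is correct and takes essentially the same route as the paper's: quadratic penalization $\varphi \pm \epsilon|x-x_0|^2$, transfer of the now-strict extremum to $v_k$ via uniform convergence, a vertical shift to obtain an admissible test function for $v_k$, and successive limits $k \to \infty$ then $\epsilon \to 0$, using openness of $U$ for part (a) and hypothesis \eqref{Eq:SuperClosed} for part (b). Your observation that the structural monotonicity conditions \eqref{Eq:UMono}--\eqref{Eq:AMono} are not invoked in the stability argument is accurate and consistent with the paper, whose proof (given only for part (b), with part (a) noted as similar) also does not use them.
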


\begin{rem}
In general, condition \eqref{Eq:SuperClosed} cannot be dropped in (b).

As a first example, consider
\begin{align*}
F(M) &= tr(M), \qquad U = \{M \in Sym^{n\times n}: tr(M) > 0\},\\
A[v] &= - \nabla^2 v.
\end{align*}
Clearly, \eqref{Eq:UMono}, \eqref{Eq:FMono} and \eqref{Eq:AMono} are satisfied. Now, if $v_k =  -\frac{1}{k}|x|^2$, then $A[v_k] \in U$ but $v = \lim_{k \rightarrow \infty} v_k = 0$ does not satisfy $A[v] \in U$.

The situation does not improve even if one imposes that $v_k$ is a solution and that $U$ is a maximal set where the ellipticity condition \eqref{Eq:FMono} holds. See Remark \ref{Rem:ASD}. 
\end{rem}

\begin{rem}\label{Cor:VCConf}
If $(f,\Gamma)$ satisfies \eqref{01}-\eqref{04} and if $(F,U)$ are given by \eqref{Eq:14IV2016-X1}, then all hypotheses of Proposition \ref{Prop:VisConv} are met. In particular if $f(\lambda(A^{v_k})) = 1$ in some open set $\Omega \subset \RR^n$ in the viscosity sense and $v_k$ converges in $C^0_{loc}(\Omega)$ to $v$, then $f(\lambda(A^v)) = 1$ in $\Omega$ in the viscosity sense.
\end{rem}

\begin{proof} The proof is standard and we include it here for readers' convenience. We will only show (b). The proof of (a) is similar. Fix $x_0 \in \Omega$ and assume that $\varphi \in C^2(\Omega)$ such that $(v - \varphi)(x_0) = 0$ and $v -\varphi \geq 0$ in some small ball $B_\rho(x_0)$. We need to show that $F(A[\varphi](x_0)) \geq 0$. 

For $\delta \in (0,\rho)$, let $\varphi_\delta(x) = \varphi(x) - \delta|x - x_0|^2$. Since $\varphi_\delta(x) = \varphi(x) -\delta^3 \leq v(x) - \delta^3$ for $|x - x_0| = \delta$, the convergence of $v_k$ to $v$ implies that
\[
v_k(x) \geq \varphi_\delta(x) + \frac{1}{2}\delta^3 \text{ for all } |x - x_0| = \delta \text{ and all sufficiently large } k.
\]
Thus, as $\min_{B_\delta(x_0)} (v - \varphi_\delta) = 0$, there is some $x_k \in B_\delta(x_0)$ such that
\[
\beta_k := (v_k - \varphi_\delta)(x_k) = \min_{\bar B_\delta(x_0)} (v_k - \varphi_\delta) \rightarrow 0 \text{ as } k \rightarrow \infty.
\]
Also, since
\[
\beta_k = (v_k - \varphi)(x_k) + \delta |x_k - x_0|^2 \geq (v_k - v)(x_k) + \delta|x_k - x_0|^2,
\]
we have that $x_k \rightarrow x_0$ as $k \rightarrow \infty$. 

Now since $\hat\varphi_k := \varphi_\delta + \beta_k$ satisfies $(v_k - \hat\varphi_k)(x_k) = 0$, $v_k - \hat \varphi_k \geq 0$ in $B_\delta(x_0)$, and since $v_k$ is a solution of $F(A[v_k]) = 0$, we infer that
\[
F(A[\hat\varphi_k](x_k)) \geq 0.
\]
Sending $k \rightarrow \infty$ and then $\delta \rightarrow 0$ and using \eqref{Eq:SuperClosed}, we deduce that $A[\varphi](x_0) \in U$ and
\[
F(A[\varphi](x_0)) \geq 0,
\]
as desired. (Note that \eqref{Eq:SuperClosed} is not required to show that (a).)
\end{proof}

\section{A calculus lemma}\label{Sec:AppC}

We collect here a couple calculus statements which was used when we applied the method of moving spheres in the body of the paper.

\begin{lem}
Let $f\in C^0(\RR^n \setminus \{0\})$, $n\ge 1$, $\nu>0$.  Assume that, for all $x \in \RR^n$ and $0 < \lambda \leq |x|$, there holds
$$
\Big( \frac \lambda {|y-x|}\Big)^\nu
f\Big(x+ \frac {  \lambda^2 (y-x) }{  |y-x|^2 }\Big) 
\le f(y) \text{ for } y \in \RR^n \setminus (B_\lambda(x) \cup \{0\}).
$$
Then $f$ is radially symmetric about the origin and $f: (0,\infty) \rightarrow (0,\infty)$ is non-increasing.
\label{lem-app1Sing}
\end{lem}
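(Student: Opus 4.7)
The plan is to extract the two conclusions from the moving-sphere hypothesis by taking two different degenerations of the Kelvin inversion: a hyperplane-reflection limit for radial symmetry, and a sub-extremal inversion in a perpendicular configuration for monotonicity. The one algebraic identity driving both steps, valid whenever $\lambda = |x|$, is
\[
|z|\cdot|y - x| = |x|\cdot|y|, \qquad z := x + \lambda^2(y-x)/|y-x|^2,
\]
which follows from direct expansion of $|z|^2$.

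\textbf{Step 1 (radial symmetry).} Fix a unit vector $e$ and apply the hypothesis with $x_R = -Re$ and $\lambda_R = R$, so that $\partial B_R(-Re)$ passes through the origin and, as $R \to \infty$, degenerates to the hyperplane $H_e := \{z \cdot e = 0\}$. For any fixed $y$ with $y \cdot e > 0$, the hypothesis applies once $R$ is large, and a direct expansion shows that $z_R \to y - 2(y \cdot e)\,e$ (the reflection of $y$ across $H_e$), while the identity above yields $(\lambda_R/|y-x_R|)^\nu = (R/|y + Re|)^\nu \to 1$. Passing to the limit and using continuity of $f$ gives $f(y - 2(y \cdot e)e) \leq f(y)$ for $y \cdot e > 0$. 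Re-running with $-e$ in place of $e$ gives the same inequality on the opposite half-space; swapping $y$ with its reflection then yields the reverse inequality. Hence $f$ is invariant under reflection across $H_e$ for every unit vector $e$, so $f$ is radial.

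\textbf{Step 2 (monotonicity).} Write $f(y) = g(|y|)$. Given $0 < t_1 < t_2$, pick orthogonal unit vectors $e, e_\perp$, take $x = ce$ with $c > t_2$ and $y = t_1\,e_\perp$ (so $|y-x| = \sqrt{t_1^2 + c^2}$), and select $\lambda \in (0,c)$ so that the Kelvin image has norm $t_2$. A direct computation with $y \perp x$ gives
\[
|z|^2\,(t_1^2 + c^2) = c^2 t_1^2 + (c^2 - \lambda^2)^2,
\]
so the condition $|z| = t_2$ determines
\[
\lambda^2 = c^2 - \sqrt{t_1^2 t_2^2 + c^2(t_2^2 - t_1^2)},
\]
which is in $(0,c^2)$ precisely when $c > t_2$; moreover $|y-x| > c > \lambda$, so the hypothesis applies. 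It yields $\bigl(\lambda/\sqrt{t_1^2+c^2}\bigr)^\nu g(t_2) \leq g(t_1)$. As $c \to \infty$, one checks $\lambda = c - O(1)$, so the prefactor tends to $1$ and we conclude $g(t_2) \leq g(t_1)$.

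\textbf{Main obstacle.} The substantive part is spotting these two Kelvin configurations. The extreme choice $\lambda = |x|$ is what makes the prefactor tend to $1$ as $|x| \to \infty$ in Step 1; the sub-extremal choice $\lambda < |x|$ in the perpendicular setup of Step 2 is what allows the image radius $|z|$ to \emph{exceed} $|y|$, which in turn is what turns the inequality into a monotonicity statement for $g$. Neither argument would go through under the more restrictive hypothesis $\lambda < \lambda_0(x)$ of small $\lambda$ only; the allowance $\lambda \leq |x|$ is used essentially. Once these configurations are in place, verifying the limits is routine given the explicit formulas above and the continuity of $f$.
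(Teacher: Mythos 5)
Your two-step scheme — degenerate the sphere inversions to hyperplane reflections to get radial symmetry, then use a sub-extremal inversion in a transverse configuration to get monotonicity of the radial profile — is the standard moving-spheres calculus argument, and it is what the cited source (Li, \emph{J.~Funct.~Anal.} 233, the discussion after equation (110)) does; the paper here gives no proof of its own, only that citation. Your computations are correct: the identity $|z|\,|y-x|=|x|\,|y|$ for $\lambda=|x|$, the limit $z_R\to y-2(y\cdot e)e$ with prefactor $R/|y+Re|\to 1$, and the perpendicular choice $\lambda^2 = c^2-\sqrt{t_1^2t_2^2+c^2(t_2^2-t_1^2)}$ with prefactor $\lambda/\sqrt{t_1^2+c^2}\to 1$ as $c\to\infty$ all check out, and you correctly observe that the allowance $\lambda\le|x|$ (rather than merely $\lambda$ small) is what makes each limit degenerate to the equality case.

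Two small remarks. First, the lemma is stated for $n\ge 1$, but your Step 2 requires a unit vector $e_\perp$ orthogonal to $e$, hence $n\ge 2$. This is easily repaired by using the collinear configuration instead: take $x=se$, $y=t_1 e$ and $\lambda^2=(s-t_1)(s-t_2)$, so that $z=\big(s-\lambda^2/(s-t_1)\big)e$ has $|z|=t_2$, while $0<\lambda<s-t_1=|y-x|$ and $\lambda\le|x|=s$ for $s>t_2$; the hypothesis gives $\big(\lambda/(s-t_1)\big)^{\nu}g(t_2)\le g(t_1)$ and $\lambda/(s-t_1)=\sqrt{(s-t_2)/(s-t_1)}\to 1$ as $s\to\infty$, yielding $g(t_2)\le g(t_1)$ in every dimension (and Step 1, which only uses a single unit vector, already works for $n=1$). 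Second, the stated conclusion asserts $f:(0,\infty)\to(0,\infty)$, i.e.\ positivity; your proof does not (and cannot) establish this, as $f\equiv 0$ satisfies the hypothesis. In the one place the paper invokes this lemma, positivity of the limit function is obtained separately from Harnack-type bounds, so this is a defect of the statement rather than of your argument, but it is worth flagging.
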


\begin{proof} This was established in \cite{Li06-JFA}. See the argument following equation (110) therein.
\end{proof}

\begin{cor}
Let $f\in C^0(\RR^n)$, $n\ge 1$, $\nu>0$.  Assume that
$$
\Big( \frac \lambda {|y-x|}\Big)^\nu
f\Big(x+ \frac {  \lambda^2 (y-x) }{  |y-x|^2 }\Big) 
\le f(y),\quad
\forall\ \lambda>0, \ x\in \RR^n,\ 
|y-x|\ge \lambda.
$$
Then $f\equiv $\ constant.
\label{lem-app1}
\end{cor}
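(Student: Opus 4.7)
The plan is to deduce this corollary from Lemma \ref{lem-app1Sing} by exploiting the translation invariance of the hypothesis and the fact that, in the corollary, the inequality is assumed for \emph{every} $\lambda>0$ (not merely for $\lambda\le |x|$) and for \emph{every} center $x\in\RR^n$ (not only $x\ne 0$). The point is that Lemma \ref{lem-app1Sing} already gives radial symmetry about a distinguished point (the origin); by translating the center of the Kelvin transform, we will upgrade this to radial symmetry about every point, which forces $f$ to be constant.

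More concretely, fix an arbitrary $x_0\in\RR^n$ and set $g(z):=f(z+x_0)$ for $z\in\RR^n$. Substituting $x\mapsto x+x_0$ and $y\mapsto y+x_0$ into the hypothesis of Corollary \ref{lem-app1} shows that
\[
\Big(\frac{\lambda}{|y-x|}\Big)^\nu g\Big(x+\frac{\lambda^2(y-x)}{|y-x|^2}\Big)\le g(y)
\]
holds for all $\lambda>0$, $x\in\RR^n$ and $|y-x|\ge\lambda$. In particular this holds for all $x\in\RR^n\setminus\{0\}$ and all $0<\lambda\le |x|$ with $y\in\RR^n\setminus(B_\lambda(x)\cup\{0\})$, which is precisely the hypothesis of Lemma \ref{lem-app1Sing} applied to $g\restriction_{\RR^n\setminus\{0\}}$. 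Thus $g$ is radially symmetric and nonincreasing about the origin. Unwinding the translation, $f$ is radially symmetric about $x_0$.

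Since $x_0\in\RR^n$ was arbitrary, $f$ is radially symmetric about every point of $\RR^n$. To conclude that $f$ must be constant, take any two points $a,b\in\RR^n$; if $a=b$ there is nothing to show, and otherwise pick any $p$ on the perpendicular bisector of $a$ and $b$ so that $|a-p|=|b-p|$. Radial symmetry about $p$ then gives $f(a)=f(b)$. This is the only nontrivial observation needed, and it is elementary; there is no real obstacle in the argument, the whole proof being a short packaging of Lemma \ref{lem-app1Sing} with the additional translational freedom granted by the stronger hypothesis.
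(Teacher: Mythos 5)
Your proof is correct and takes the route the paper intends: the statement is labeled a corollary precisely because it follows from Lemma \ref{lem-app1Sing} by translation invariance, and the paper in fact supplies no written proof for it, leaving this easy deduction to the reader. The two steps you give — (1) applying Lemma \ref{lem-app1Sing} to each translate $g(z)=f(z+x_0)$ to get radial symmetry about every $x_0$, and (2) using that radial symmetry about every point forces $f(a)=f(b)$ by choosing a center $p$ equidistant from $a$ and $b$ — are exactly what is needed, and both are carried out correctly (including the observation that the corollary's hypothesis, holding for all $\lambda>0$ and all $y\neq x$, is a fortiori strong enough to invoke the lemma with its restrictions $\lambda\le|x|$, $y\neq 0$). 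The only cosmetic remark is that you do not need the nonincreasing part of the lemma's conclusion, only the radial symmetry.
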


\newcommand{\noopsort}[1]{}

\end{document}